\newtheorem{theorem}{Theorem}[section]
\newtheorem{corollary}[theorem]{Corollary}
\newtheorem{lemma}[theorem]{Lemma}
\newtheorem{proposition}[theorem]{Proposition}
\theoremstyle{definition}
\newtheorem{definition}[theorem]{Definition}
\newtheorem{convention}[theorem]{Convention}
\newtheorem{remark}[theorem]{Remark}
\theoremstyle{remark}
\newcommand{\D}{{\mathbb{D}}}
\newcommand{\R}{{\mathbb{R}}}
\newcommand{\Z}{{\mathbb{Z}}}
\newcommand{\Dd}{{\mathcal{D}}}
\newcommand{\Ff}{{\mathcal{F}}}
\newcommand{\Gg}{{\mathcal{G}}}   
\newcommand{\Oo}{{\mathcal{O}}}
\newcommand{\im}{{\rm im\, }}      
\newcommand{\dist}{{\rm dist}}     
\newcommand{\INT}{{\rm int\, }}       
\newcommand{\IND}{{\rm ind}}       
\newcommand{\Crit}{{\rm Crit}}        
\newcommand{\Ho}{{\rm H}}             
\newcommand{\Hess}{\mathrm{Hess}}        
\newcommand{\norm}{{\rm norm}}
\newcommand{\eps}{{\varepsilon}}
\def\NABLA#1{{\mathop{\nabla\kern-.5ex\lower1ex\hbox{$#1$}}}}
\def\Nabla#1{\nabla\kern-.5ex{}_{#1}}
\def\Tabla#1{\Tilde\nabla\kern-.5ex{}_{#1}}
\def\abs#1{\mathopen|#1\mathclose|}   
\def\Abs#1{\left|#1\right|}            
\def\norm#1{\mathopen\|#1\mathclose\|}
\def\Norm#1{\left\|#1\right\|}
\renewcommand{\Tilde}{\widetilde}
\newcommand{\p}{{\partial}}
\renewcommand{\1}{{{\mathchoice {\rm 1\mskip-4mu l} {\rm 1\mskip-4mu l}
{\rm 1\mskip-4.5mu l} {\rm 1\mskip-5mu l}}}}
\title{Contraction method and Lambda-Lemma
       }
\author{Joa Weber\footnote{
        {\bf Financial support:}
        FAPESP grant 2013/20912-4, FAEPEX grant 1135/2013,
        and CNPq, Conselho Nacional de Desenvolvimento Cient\'{\i}fico
        e Tecnol\'ogico - Brasil.
  \hfill
%
  \newline
%
  joa@math.sunysb.edu
        }
        \\
        IMECC UNICAMP }
\date{\today}
\begin{document}

\maketitle 

\begin{abstract}
We reprove the $\lambda$-Lemma for
finite dimensional gradient flows by generalizing the
well-known contraction method proof of the
local (un)stable manifold theorem.
This only relies on the forward Cauchy problem. We obtain
a rather quantitative description of (un)stable foliations
which allows to equip each leaf with a copy of the flow
on the central leaf -- the local (un)stable manifold.
These dynamical thickenings are key tools in our recent work~\cite{weber:2014e}.
The present paper provides their construction.
\end{abstract}%

\tableofcontents


%
%
%


\section{Introduction and main results}\label{sec:intro}
Throughout $(M^n,g)$ denotes a Riemannian manifold of
finite dimension $n$ and $f$ is a function on $M$ of class
$C^{r+1}$ with $r\ge 1$. We assume further that the downward gradient equation
\begin{equation}\label{eq:DGE}
     \dot u(t):=\tfrac{d}{dt} u(t)
     =-(\nabla f)\circ u(t)
\end{equation}
for curves $u:[0,\infty)\to M$ generates a complete forward flow
$\varphi=\{\varphi_t\}_{t\ge 0}$. This holds true,
for instance, whenever $f$ is {\bf exhaustive}, that is whenever
all sublevel sets $M^a=\{f\le a\}$ are compact.
By $x$ we denote throughout a non-degenerate critical point of $f$
of Morse index $k=\IND(x)$ with $1\le k\le n-1$; for $k\in\{0,n\}$
there is no $\lambda$-Lemma.
In the following we give an overview of the contents of this paper.
Definitions and proofs are given in subsequent sections.

The $\lambda$-Lemma\footnote{
  The naming {\bf inclination} or {\bf\boldmath $\lambda$-Lemma}
  is due to the fact that it asserts $C^1$ convergence of a family of
  disks to a given one in the (un)stable manifold, so the
  \emph{inclination} of suitable tangent vectors converges.
  In~\cite{Palis:1969a} these inclinations were denoted by $\lambda_\ell$.
  }
was proved by Palis~\cite{palis:1967a,Palis:1969a} in the late 60's.
Its backward version asserts that, given a hyperbolic singularity $x$ of a $C^1$
vector field $X$ on a finite dimensional manifold, the corresponding backward flow
applied to any disk $D$ transverse to the unstable manifold $W^u(x)$ converges in $C^1$
and locally near $x$ to the stable manifold of $x$; cf.~\cite[Ch.~2 \S7]{palis:1982a}
and Figure~\ref{fig:fig-lambda-Lemma}.
Since the $\lambda$-Lemma is a local result we pick
convenient coordinates about $x$.

\begin{definition}[Local coordinates and notation -- Figure~\ref{fig:fig-local-setup-NEW}]
\label{def:loc-coord-lambda}
\mbox{ }

\begin{enumerate}
\item[\bf (H1)]
  Associated to the Hessian operator $A=D(\nabla f)_x$ there
  is the spectral splitting $X:=T_xM=E^-\oplus E^+$
  in~(\ref{eq:splitting}) with spectral projections $\pi_\pm$
  and where $E^{-/+}=T_xW^{u/s}(x)$.
  Consider the spectral gap $d>0$ of $A$, see~(\ref{eq:spec-A}), and fix a constant
  $\lambda\in(0,d)$ -- which will become the rate of
  exponential decay. Consider the constants
  $0<\delta<\mu<d$ defined by Figure~\ref{fig:fig-spectral-gap}.
\begin{figure}
  \centering
  \includegraphics{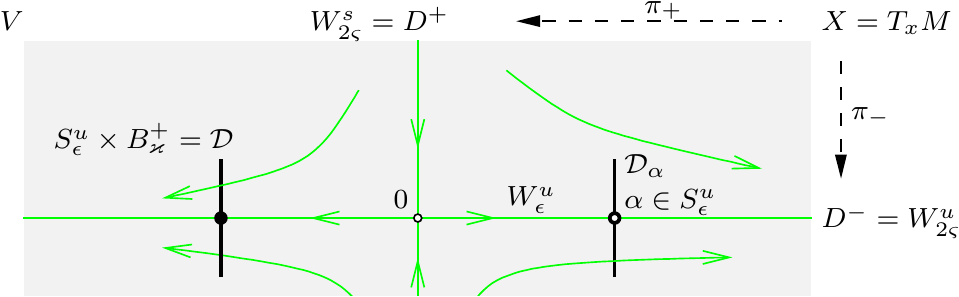}
  \caption{Local coordinates near non-degenerate critical point}
  \label{fig:fig-local-setup-NEW}
\end{figure}
\item[\bf (H2)]
  Express the downward gradient equation~(\ref{eq:DGE})
  via exponential coordinates in the form of the ODE~(\ref{eq:f})
  on a ball $B_{\rho_0}\subset X:=T_xM$ whose radius
  $\rho_0\le 1$ satisfies~(\ref{eq:rho_0}).
  In these coordinates the origin represents $x$
  and the local flow will be denoted by $\phi=\{\phi_t\}$.
  The non-linearity $h$ of this ODE, also shown in~(\ref{eq:f}),
  satisfies the Lipschitz estimates of Lemma~\ref{le:f}.

\noindent
{\bf Notation.} The notation of coordinate representatives of global
objects such as $W^s(x)\subset M$ will be the global notation
without the $x$ such as~$W^s$.
\item[\bf (H3)]
  By Remark~\ref{rem:flat-inv-mfs} we take our local model of
  the form $V=D^-\times D^+$ where $V \subset B_{\rho_0}$ and
  $D^{-/+}=W^{u/s}_{2\varsigma}\subset E^\mp$ represent a/de/scending
  disks, see~(\ref{eq:def-desc-disk}) and Remark~\ref{rem:asc-disk},
  whose parameter $2\varsigma$ is determined by
  the inclusions~(\ref{eq:F-G-inc}).
  In this local model the graph maps~(\ref{eq:graph-F}) and~(\ref{eq:graph-G})
  of the local un/stable manifolds take on the rather simple form
  $$
      \Ff^\infty=(id,0):D^-\to D^-\times D^+,\qquad
      \Gg^\infty=(0,id):D^+\to D^-\times D^+.
  $$

\item[\bf (H4)]
Suppose $\eps\in(0,\varsigma)$ and consider the descending sphere
$S^u_\eps=\p W^u_\eps$ contained in $W^u_{2\varsigma}=D^-$.
Pick a constant $\varkappa\in(0,1]$ such that the radius $\varkappa$ ball
$B_\varkappa^+\subset E^+$ lies in the ascending disk
$W^s_{\varsigma}\subset W^s_{2\varsigma}=D^+$. Consequently the hypersurface
$\Dd:=S^u_\eps\times B_\varkappa^+$ is contained in the interior of $V$.
The fiber $\Dd_{\alpha}:=\{{\alpha}\}\times B_\varkappa^+$ over ${\alpha}\in S^u_\eps$
is a translated ball of codimension~$k$.
\end{enumerate}
\end{definition}

\begin{theorem}[Backward $\lambda$-Lemma]
\label{thm:backward-lambda-Lemma}
Given the local model on $V=W^u_{2\varsigma}\times W^s_{2\varsigma}\subset E^-\oplus E^+=T_x M$
provided by Definition~\ref{def:loc-coord-lambda} nearby a
non-degenerate critical point $x$ of a function $f:M\to\R$,
pick $\eps\in(0,\varsigma)$. Then the following is true. There is a ball
$B^+\subset E^+$ about the origin of some radius $\frac{\rho}{2}>0$,
a constant $T_0>0$, and a Lipschitz continuous map
\begin{equation*}
\begin{split}
     \Gg:[T_0,\infty)\times S^u_\eps\times B^+
    &\to V\subset E^-\oplus E^+
   \\
     (T,{z_-},z_+)
    &\mapsto 
     \left(G^T_{z_-}(z_+),z_+\right)
     =:\Gg^T_{z_-}(z_+)
\end{split}
\end{equation*}
defined by~(\ref{eq:G^T}) below which is of class $C^r$ in ${z_-}$
and $z_+$. It satisfies the identity $\Gg^T_{z_-}(0)=\phi_{-T}({z_-})$ and the graph
of $G^T_{z_-}$ consists of those $z\in V$ which satisfy $\pi_+ z\in B^+$
and reach the fiber $\Dd_{z_-}=\{{z_-}\}\times B^+_\varkappa$ at time $T$, that is
\begin{equation*}
\begin{split}
     \Gg^T_{z_-}(B^+)
     ={\phi_T}^{-1}\Dd_{z_-}\cap
     \left(E^-\times B^+\right).
\end{split}
\end{equation*}
Moreover, the graph map $\Gg^T_{z_-}$ converges uniformly in $C^1$, as $T\to\infty$,
to the local stable manifold graph map $\Gg^\infty$ as illustrated by Figure~\ref{fig:fig-lambda-Lemma}.
More precisely,
\begin{equation}\label{eq:lin-lam}
\begin{split}
     \bigl\|\Gg^T_{z_-}(z_+)-\Gg^\infty(z_+)\bigr\|=\bigl\|G^T_{z_-}(z_+) \bigr\|
   &\le e^{-T\frac{\lambda}{8}}, \\
     \bigl\|d\Gg^T_{z_-}(z_+)v-d\Gg^\infty(z_+)v \bigr\|=\bigl\|dG^T_{z_-}(z_+)v \bigr\|
   &\le c_* e^{-T\frac{\lambda}{8}}\Norm{v},
\end{split}
\end{equation}
for all $T\ge T_0$, ${z_-}\in S^u_\eps$,
$z_+\in B^+$, and $v\in E^+$ where $c_*>0$ is a constant. Estimate two requires
$f\in C^{2,1}$ near $x$ in which case $T\mapsto\frac{d}{dT}\Gg(T,z_-,z_+)$
is also Lipschitz continuous.\footnote{
  The condition $f\in C^{2,1}$ near $x$ (satisfied for $r\ge2$) hinges on the
  Lipschitz Lemma~\ref{le:f}.
  }
\end{theorem}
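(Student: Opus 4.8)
\emph{The plan} is to describe each leaf $\Gg^T_{z_-}(B^+)$ not by flowing the fibre $\Dd_{z_-}$ backward — which would invoke the backward Cauchy problem — but by solving directly for the \emph{forward} trajectory segment $\zeta\colon[0,T]\to V$ that starts on the leaf and reaches $\Dd_{z_-}$ at time $T$. Write~(\ref{eq:f}) as $\dot\zeta=-A\zeta+h(\zeta)$, $A=D(\nabla f)_x$, split along $X=E^-\oplus E^+$ (expanding on $E^-$, contracting on $E^+$ at rates governed by the spectral constants $\delta,\mu$ of Figure~\ref{fig:fig-spectral-gap}), with $h$ of small Lipschitz constant $L$ on $B_{\rho_0}$ and $h(0)=0$, $Dh(0)=0$ by Lemma~\ref{le:f}. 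Integrating the $E^-$–component backward from time $T$ and the $E^+$–component forward from time $0$, the variation of constants formula shows that $\zeta$ solves the ODE with $\pi_+\zeta(0)=z_+$ and $\pi_-\zeta(T)=z_-$ if and only if $\zeta$ is a fixed point of
\[
  (\Psi_{T,z_-,z_+}\zeta)(t)
  =\Bigl(e^{-(t-T)A_-}z_--\int_t^T e^{-(t-s)A_-}\,\pi_-h(\zeta(s))\,ds,\;\;
   e^{-tA_+}z_++\int_0^t e^{-(t-s)A_+}\,\pi_+h(\zeta(s))\,ds\Bigr),
\]
$A_\pm:=A|_{E^\pm}$. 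Note that $\Psi$ uses only the two linear semigroups and $h$, never a backward flow — this is the sense in which only the forward Cauchy problem is used. On $C([0,T],X)$ with the supremum norm the small-Lipschitz bound should make $\Psi$ preserve a closed ball of curves confined to $V$ and a contraction with constant $\le L/\delta+L/\mu<1$ \emph{uniformly in} $T\ge T_0$; the unique fixed point $\zeta=\zeta^T_{z_-,z_+}$ then defines $\Gg^T_{z_-}(z_+):=(\pi_-\zeta(0),z_+)$, i.e.\ $G^T_{z_-}$ as in~(\ref{eq:G^T}).

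Next I would establish the a priori estimates and the elementary identities. Since the flat manifolds $W^u=D^-\times\{0\}$ and $W^s=\{0\}\times D^+$ of~(H3) are flow–invariant, the vector field is tangent to them, so $\pi_+h$ vanishes on $W^u$ and $\pi_-h$ on $W^s$; hence $\|\pi_-h(z)\|\le L\|\pi_- z\|$ and $\|\pi_+h(z)\|\le L\|\pi_+ z\|$. Feeding this into the two integral relations and running Gronwall should give, for $T\ge T_0$,
\[
  \|\pi_-\zeta(t)\|\le 2\eps\,e^{-\nu(T-t)},\qquad
  \|\pi_+\zeta(t)\|\le 2\|z_+\|\,e^{-(\delta-L)t},
\]
with a rate $\nu>0$ (essentially $\mu-L$) which the choices in Figure~\ref{fig:fig-spectral-gap} keep above $\tfrac\la8$. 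The first bound at $t=0$, after absorbing $\eps$ and taking $T_0$ large, is the first line of~(\ref{eq:lin-lam}); it also confirms $\zeta$ never leaves $V$, so $\zeta$ is a genuine local trajectory and $\Gg^T_{z_-}(B^+)\subset\phi_T^{-1}\Dd_{z_-}\cap(E^-\times B^+)$, the reverse inclusion following from uniqueness of the fixed point together with the same confinement bound. Taking $z_+=0$, invariance of $W^u$ forces $\pi_+\zeta\equiv0$ by uniqueness, so $\zeta$ is the backward trajectory of $z_-$ inside $W^u$ and $\Gg^T_{z_-}(0)=\phi_{-T}(z_-)$.

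For regularity and the inclination estimate I would proceed as follows. $C^r$–dependence of $G^T_{z_-}$ on $(z_-,z_+)$ is the uniform contraction principle for the $C^r$ family $\Psi_{T,z_-,z_+}$ (here $\nabla f\in C^r$, hence $h\in C^r$), differentiated $r$ times in the fibre. For the second line of~(\ref{eq:lin-lam}) differentiate the fixed point identity in $z_+$: the derivative curve $\eta^v:=\partial_{z_+}\zeta\cdot v$ solves the \emph{linear} integral equation obtained from $\Psi$ by replacing $\pi_\pm h(\zeta(\cdot))$ with $\partial\pi_\pm h(\zeta(\cdot))\eta^v(\cdot)$ and $z_-$ with $0$, again a uniform contraction, so $\|\eta^v\|_\infty\le 2\|v\|$. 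The crucial input is the finer structure of $Dh$: because $Dh(0)=0$ and $\pi_-h$ vanishes on $W^s$, one has $\|\partial_{z_+}\pi_-h(z)\|\le C\|\pi_- z\|$ and $\|\partial_{z_-}\pi_-h(z)\|\le C\|z\|$ — and this is precisely where $Dh$ must be Lipschitz, i.e.\ $f\in C^{2,1}$ near $x$ (the hypothesis flagged in the statement, controlled by Lemma~\ref{le:f}). Substituting the decay $\|\pi_-\zeta(t)\|\le 2\eps e^{-\nu(T-t)}$ into the $E^-$–component of the linearized equation and bootstrapping should yield $\|\pi_-\eta^v(t)\|\le c_*\eps\,e^{-\nu(T-t)}\|v\|$; at $t=0$ this is the derivative estimate. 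Finally, differentiating the fixed point identity in $T$ gives one more linear integral equation of the same type whose inhomogeneity — assembled from $Dh(\zeta(\cdot))$ and the endpoint terms — is Lipschitz in $T$ precisely when $Dh$ is, which yields the Lipschitz claim for $T\mapsto\partial_T\Gg$.

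The main obstacle, as I see it, is the inclination estimate: converting the crude bound $\|\eta^v\|_\infty\lesssim\|v\|$ into genuine exponential decay of $\pi_-\eta^v(0)$ as $T\to\infty$. This hinges on $\pi_-h$ and $\partial_{z_+}\pi_-h$ vanishing along the stable manifold, which couples the smallness of $Dh$ to the smallness of $\pi_-\zeta$ near time $T$ and then transports it, through the expanding $E^-$–directions, back to time $0$; keeping track of the competing exponentials — and hence pinning down $\delta,\mu$ and $T_0$ so that the surviving rate dominates $\tfrac\la8$ — is the delicate bookkeeping. Everything else is the classical contraction–method package.
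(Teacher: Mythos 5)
Your proposal is correct and shares the paper's backbone --- the mixed Cauchy problem, the variation-of-constants operator $\Psi^T_{z_-,z_+}$, a contraction on curves over $[0,T]$ uniform in $T$, the uniform contraction principle for $C^r$ dependence on $(z_-,z_+)$, and differentiation of the fixed-point identity in $z_+$ and in $T$ --- but it takes a genuinely different route to the two estimates~(\ref{eq:lin-lam}). The paper contracts on the ball $\{\xi:\|\xi-\phi_\cdot\phi_{-T}z_-\|_{\exp}\le\rho\}$ in an exponentially weighted norm centred on the reference trajectory inside $W^u_\eps$, and proves the $C^0$ estimate by comparing the fixed point $\xi^T$ with the $T=\infty$ fixed point $\xi_{z_+}$ of the local stable manifold theorem, decomposing $[0,T]$ into $[0,T/2]$, $[T/2,3T/4]$, $[3T/4,T]$ and using the uniform decay $\|\phi_{\sigma-T}z_-\|\le e^{(\sigma-T)\lambda}$ of backward trajectories in the unstable manifold (the source of the exponent $\lambda/8$); the $C^1$ estimate repeats this comparison for the linearizations $X_v^T$ and $X_v$. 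You instead exploit the flattened coordinates of Remark~\ref{rem:flat-inv-mfs} more aggressively: tangency of the vector field to $D^-\times\{0\}$ and $\{0\}\times D^+$ gives $\|\pi_\mp h(z)\|\le\kappa(\rho)\|\pi_\mp z\|$ and $\pi_-\,\partial_{z_+}h|_{(0,z_+)}=0$, so a backward Gronwall argument on the $E^-$-components of $\zeta$ and then of $\eta^v$ yields the decay in $T-t$ directly, with no reference to the stable-manifold fixed point. Both routes are valid; yours is shorter and makes the origin of the decay rate transparent, but it leans entirely on the simultaneous flattening of both local invariant manifolds (and produces prefactors that must be absorbed by enlarging $T_0$), whereas the paper's comparison argument is the one that transfers to the infinite-dimensional semiflow setting the author targets. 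Two points to nail down in your scheme: in the bootstrap for $\pi_-\eta^v$ the convolution $\int_t^Te^{-(s-t)\mu}e^{-\nu(T-s)}\,ds$ costs a factor $(\mu-\nu)^{-1}$, so you must run the output rate $\nu$ strictly below $\mu$ (say $\nu=\lambda$, which is where the $1/\delta$ and $1/\lambda$ in $c_*$ come from); and the plain Lipschitz continuity of $T\mapsto\Gg(T,z_-,z_+)$, which holds already for $f\in C^2$, should be extracted from a direct bound on $\zeta^{T+\tau}-\zeta^T$ before you differentiate in $T$.
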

\begin{figure}
  \centering
  \includegraphics{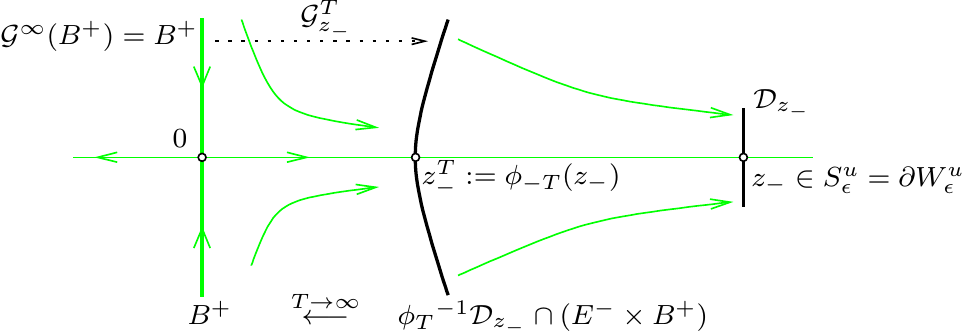}
  \caption{Backward $\lambda$-Lemma}
  \label{fig:fig-lambda-Lemma}
\end{figure}

\begin{remark}[Contraction method proof of $\lambda$-Lemma]\label{rmk:pro-con-lambda}
The contraction method proof presented below has its own dis/advantages.
On the worrying side, for $f\in C^2$, that is $X=-\nabla f\in C^1$,
we only obtain $C^0$ convergence so far, whereas we do get $C^1$
convergence if $f$ is of class $C^{2,1}$ near $x$, e.g. if $f\in C^3$.
On the bright side, we get rather useful quantitative control on each of the involved
variables such as time $t$, the variable describing the dislocated disks, and their
dependence on the base point; for details see
Theorem~\ref{thm:backward-lambda-Lemma}.
Most importantly, this quantitative control lends itself to construct foliations around
critical points, foliations associated to global objects, namely sublevel sets,
and to equip each leaf with its own semi-flow;
see~\cite{weber:2014c,weber:2014e} for first applications.
\end{remark}

\begin{remark}\label{rem:general-D}
Theorem~\ref{thm:backward-lambda-Lemma} is
a family backward $\lambda$-Lemma, namely
for the family of disks $\Dd=S^u_\eps\times B^+_\varkappa$.
Given a general hypersurface $\Dd^\prime$ whose intersection with the unstable
manifold is transverse and equal to $S^u_\eps$, one needs to change
coordinates to bring $\Dd^\prime$ into the required normal form
$S^u_\eps\times B^+_\varkappa$. To achieve this apply the parametrized Implicit Function Theorem
to construct a $C^r$ diffeomorphism of $V$ which is the identity outside a small
neighborhood $U$ of the \emph{compact} intersection locus $S^u_\eps$ and maps the
part of $\Dd^\prime$ near $S^u_\eps$ to $S^u_\eps\times B^+_\varkappa$
for some $\varkappa$.
\end{remark}

We substituted the backward flow of the disk $\Dd_{z_-}$
by its pre-image ${\phi_T}^{-1}\Dd_{z_-}$, because there are
interesting situations, cf.~\cite{henry:1981a}, with only a forward
semi-flow. For instance, the heat flow on the loop space of a closed Riemannian
manifold~\cite{salamon:2006a} is just a forward \emph{semi}-flow.
Surprisingly, although there is no backward flow, there is a
backward $\lambda$-Lemma\cite{weber:2014a} under which the hypersurface $\Dd$
moves backward in time and foliates an open set.
Furthermore, it is even still possible to construct a non-trivial Morse
complex~\cite{weber:2013b,weber:2014c}.
Consequently the -- in finite dimensions -- common situation
of a genuine forward and backward flow with stable and
unstable manifolds, all being finite dimensional, allows for (too) many choices.
In order to emphasize the necessary elements and, at
the same time, to introduce the reader tacitly to the infinite dimensional
heat flow scenario, we subject ourselves to the following convention.

\begin{convention}[No backward Cauchy problem]\label{con:back-flow}
We shall use existence of a solution to the
{\bf Cauchy problem only in forward time} and ignore the
backward Cauchy problem alltogether. However, \emph{on the unstable
manifold} there is still a natural procedure to define a backward flow;
see Definition~\ref{def:alg-back-flow}. Since there is no Cauchy problem involved,
we will (and need to) use this so-called {\bf algebraic backward
flow}; this is coherent with the infinite dimensional
case cited above. In finite dimensions the algebraic
backward flow coincides
with the one obtained via the Cauchy problem.
  So we do use a backward flow, but without solving the
  backward Cauchy problem and only along
  the unstable manifold.
\\
To put things positively, our self-imposed lack of mathematical
structure eliminates the possibility of choices and
therefore reveals those elements which are essential to define
a Morse complex -- or even just an unstable manifold.
\end{convention}

The simple but far reaching idea, see~\cite{weber:2014a,weber:2014c},
which avoids any backward Cauchy
problem is to look at the \emph{pre-image} under the time-$T$-map
of a given disk $\Dd_q(x)$ transverse at $q$ to the unstable manifold $W^u(x)$.
The obvious but non-trivial problem is then to show that these
\emph{sets} are in fact \emph{submanifolds} of $M$. To show this we shall
write them locally near $x$ as graphs over the stable tangent space
$E^+=T_x W^s(x)$ using the {\bf contraction method} which in
this case consists of interpreting the backward Cauchy problem as
a ``mixed Cauchy problem''. The latter can be reformulated in terms of
the fixed point of a contraction, both depending on parameters.
Thereby we obtain a new proof of the $\lambda$-Lemma in finite dimensions;
see~\cite{weber:2014a} for the infinite dimensional semi-flow case of a parabolic PDE
where this program has been carried out first.
A key point of interest will be to control how the pre-images depend on time $T$.

\subsubsection*{Stable foliations}
Borrowing from~\cite{salamon:1990a}
we define for each choice of constants $\eps,\tau>0$ a pair
\begin{equation}\label{eq:Conley-set-NEW}
\begin{split}
     N_x^s=N_x^s(\eps,\tau)
   :&=
     \left\{p\in M\mid\text{$f(p)\le c+\eps$,
     $f(\varphi_\tau p)\ge c-\eps$}\right\}_x ,
\end{split}
\end{equation}
where $\{\ldots\}_x$ denotes the
{\bf path connected component}
that contains $x$, and
\begin{equation}\label{eq:L_x}
     L_x^s=L_x^s(\eps,\tau)
     :=\{p\in N_x^s\mid f(\varphi_{2\tau} p)\le c-\eps\}.
\end{equation}
Figure~\ref{fig:fig-Conley-pair} shows a typical
\begin{figure}
  \centering
  \includegraphics{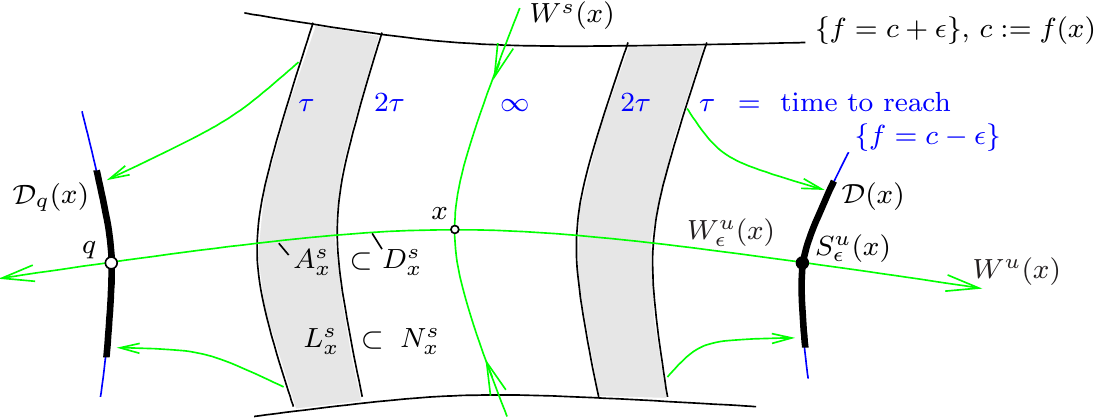}
  \caption{Conley pair $(N_x^s,L_x^s)$ associated to critical point $x$
           }
  \label{fig:fig-Conley-pair}
\end{figure}
such pair, illustrates what is called the exit set property
of $L_x^s$, and indicates hypersurfaces which are characterized by the
fact that each point reaches the level set $\{f=c-\eps\}$ in the same time.
The points on the stable manifold never reach level $c-\eps$, so they are
assigned the time label $\infty$. 
By the backward $\lambda$-Lemma,
Theorem~\ref{thm:backward-lambda-Lemma},
these hypersurfaces foliate some neighborhood of $x$.
However, the neighborhood and so the leaves have no global meaning so far.
It is the content of Theorem~\ref{thm:inv-fol} that the pairs
$(N_x^s,L_x^s)$ which are defined in terms of level sets are foliations;
for a first application see~\cite{weber:2014e}.

\begin{remark}[Forward $\lambda$-Lemma and unstable foliations]\label{rem:forward-unstable-case}
Since the backward $\lambda$-Lemma as well as the stable foliations
$(N_x^s,L_x^s)$ with respect to $f$ are of local nature at $x$ one can simply obtain
the forward version of Theorem~\ref{thm:backward-lambda-Lemma}
and the unstable foliations $(N_x^u,L_x^u)$ with respect to $f$
by taking the former ones with respect to $-f$.
(If necessary, cut off $-f$ to obtain a complete forward flow.)
\end{remark}

\section{Preliminaries}\label{sec:setup-hyp-sing}

Throughout we denote by $\nabla=\nabla^g$ the Levi-Civita connection associated to
the  Riemannian metric $g$ on $M$ which we also denote by $\langle\cdot ,\cdot\rangle$.
Given a singularity $p$ of a vector field $X$ on $M$ we denote by $DX_p$
linearization $dX_p:T_pM\to T_{X(p)}TM\cong T_pM\oplus T_pM$
followed by projection onto the second component. The isomorphism is natural, because $X(p)=0$.
There is the identity $DX_p=(\Nabla{\cdot} X)_p$.

\subsubsection*{Hessian and spectrum}
By $\Crit f$ we denote the set of critical point of $f$.
Given $p\in\Crit f$, in other words a singularity of $X=\nabla f$,
then there is a linear symmetric operator $A=A_p:T_pM\to T_pM$
uniquely determined by the identity
\begin{equation}\label{eq:Hess-op}
     \Hess_p f\, (v,w)
     =\langle A v,w\rangle
\end{equation}
for all $v,w\in T_pM$. The bilinear form $\Hess_p f$ is defined in local
coordinates in terms of the (symmetric) matrix with entries
$(\p^2 f/\p x^i\p x^j)(p)$. It holds that $Av=D(\nabla f)_pv=(\Nabla{\tilde v}\nabla f)_p$
where $\tilde v$ is a vector field on $M$ with $\tilde v_p=v$.
By symmetry the spectrum of the {\bf Hessian operator} $A$ is real. If it does not contain
zero, then $p$ is called a {\bf non-degenerate} critical point of $f$.

Suppose $x$ is a non-degenerate critical point of $f$.
Then by the spectral theorem for symmetric operators the spectrum of
$A=A_x$ is given by $n$ reals
\begin{equation}\label{eq:spec-A}
     \lambda_1\le\dots\le\lambda_k<0
     <\lambda_{k+1}\le\dots\le\lambda_n
\end{equation}
counting multiplicities.
The distance $d>0$ between $0$ and
the spectrum of $A$ is called the
{\bf spectral gap} of $A$.
The number $k$ of negative eigenvalues
is called the {\bf Morse index} of the critical
point $x$ and denoted by $\IND(x)$.
Denote by $E^-$ and $E^+$ the span of all
eigenvectors associated to negative and
positive eigenvalues, respectively.
Then there is the orthogonal splitting
\begin{equation}\label{eq:splitting}
     T_xM=E^-\oplus E^+
\end{equation}
with associated
{\bf orthogonal projections} $\pi_-:T_xM\to E^-$
and $\pi_+:T_xM\to E^+$.
Since $A$ preserves $E^-$ and
$E^+$ it restricts to linear operators
$A^-:=A\circ\pi_-$ and $A^+:=A\circ\pi_+$ on $E^-$
and $E^+$, respectively.
If $0<k<n$, then $\lambda_1<0<\lambda_n$.

\subsubsection*{Linearized equation and linearized flow}
Linearizing the downward gradient equation~(\ref{eq:DGE})
at a constant solution $u(t)\equiv p\in \Crit f$ in direction
of a vector field $\xi:\R\to T_pM$ along $u$ leads to the linear autonomous~ODE
\begin{equation}\label{eq:lin-eq}
     \dot\xi(t)=-A\xi(t)
\end{equation}
for $\xi$ where $A=A_p$ is the Hessian operator determined by~(\ref{eq:Hess-op}).
 
Interpreting $-A:T_p M\to T_pM$ in~(\ref{eq:lin-eq})
as a vector field on the manifold $T_pM$
it is easy to see that the flow generated by $-A$
is the 1-parameter family $\{\Phi_t\}_{t\in\R}$
of invertible linear operators on $T_pM$ given by
\begin{equation}\label{eq:e^tA}
     \Phi_t =e^{-tA}:=\sum_{j=0}^\infty\frac{(-t)^j}{j!}A^j.
\end{equation}
If the spectrum of $A$ is given by $\{\lambda_1,
\dots,\lambda_n\}\subset\R$, then $\{e^{-t\lambda_1},
\dots,e^{-t\lambda_n}\}\subset(0,\infty)$ is the
spectrum of $e^{-tA}$.
We will use $\Phi_t=e^{-tA}=e^{-tA^-}\oplus e^{-tA^+}$ only for
$t\ge 0$. However, the part $e^{-tA^-}$ will be needed for all
$t\in\R$. Since this part acts on the (linear) unstable manifold $E^-$
and is defined by a matrix exponential -- thus without
employing any backward Cauchy problem --
there is no conflict with Convention~\ref{con:back-flow}.

\begin{lemma}[Linearized flow satisfies linearized equation]
Given $p\in\Crit f$, then the family of linearizations $\{d\varphi_t(p)\}_{t\ge 0}$
coincides with the time-$t$-maps $\{\Phi_t\}_{t\ge 0}$ associated to the linearized
vector field $-A=-D(\nabla f)_p$ on $T_pM$.
\end{lemma}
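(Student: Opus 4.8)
The plan is to show that the curve $t\mapsto d\varphi_t(p)$ of linear operators on $T_pM$ satisfies the linearized equation~(\ref{eq:lin-eq}) together with the initial condition, and then invoke uniqueness of solutions to this linear ODE. Differentiating the flow identity $\frac{d}{dt}\varphi_t(q)=-(\nabla f)(\varphi_t(q))=X(\varphi_t(q))$ with respect to $q$ and evaluating at $q=p$, one gets $\frac{d}{dt}\,d\varphi_t(p)=DX_{\varphi_t(p)}\circ d\varphi_t(p)$ by the chain rule (here using that the linearization of a composition is the composition of linearizations). Since $p\in\Crit f$ is a singularity of $X$, the orbit is constant, $\varphi_t(p)=p$ for all $t\ge 0$, so $DX_{\varphi_t(p)}=DX_p=D(\nabla f)_p=A$, whence
\begin{equation*}
     \tfrac{d}{dt}\,d\varphi_t(p)=-A\circ d\varphi_t(p),\qquad d\varphi_0(p)=\Id.
\end{equation*}
This is exactly the (operator-valued) initial value problem solved by $\Phi_t=e^{-tA}$ as recorded in~(\ref{eq:e^tA}).

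First I would make the differentiation-under-the-flow step rigorous: the map $(t,q)\mapsto\varphi_t(q)$ is $C^1$ (indeed $C^r$ since $X=-\nabla f\in C^r$), so $q\mapsto d\varphi_t(q)$ is continuous and the mixed partials commute; this licenses exchanging $\frac{d}{dt}$ and $d_q$ and gives the variational equation above. One should phrase it intrinsically: pick a curve $c(s)$ in $M$ with $c(0)=p$, $\dot c(0)=v\in T_pM$, set $\xi(t):=d\varphi_t(p)v=\frac{\p}{\p s}\big|_{s=0}\varphi_t(c(s))$, a vector field along the constant curve $t\mapsto p$, and differentiate in $t$ using $\frac{d}{dt}\varphi_t(c(s))=X(\varphi_t(c(s)))$ to obtain $\Nabla{t}\xi(t)=(\Nabla{\xi(t)}X)=DX_p\,\xi(t)=A\,\xi(t)$ after the sign convention $X=-\nabla f$; along a constant curve the covariant derivative $\Nabla{t}$ is just the ordinary $t$-derivative in the fixed vector space $T_pM$, so $\dot\xi=-A\xi$. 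Since $v$ is arbitrary, this is the asserted operator identity.

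Finally I would close by uniqueness for linear ODEs: both $t\mapsto d\varphi_t(p)$ and $t\mapsto\Phi_t=e^{-tA}$ are $\End(T_pM)$-valued solutions of $\dot Y=-AY$ on $[0,\infty)$ with $Y(0)=\Id$, and a linear ODE with constant (hence Lipschitz) coefficients has a unique solution, so they coincide for all $t\ge 0$. I do not expect a genuine obstacle here; the only point requiring a little care is the justification that one may differentiate the flow in $t$ and in the initial condition and swap the order — i.e. smooth dependence of solutions of~(\ref{eq:DGE}) on initial data — which is standard ODE theory given $X\in C^r$ with $r\ge 1$, and is in any case implicit in the completeness assumption on $\varphi$. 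Everything else is bookkeeping with the sign convention and with the fact that, along the constant orbit at a singularity, covariant and ordinary differentiation in time agree.
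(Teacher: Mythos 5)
The paper itself gives no argument here — it simply cites \cite[Le.~2.5]{weber:2006b} — so there is nothing to compare against line by line; your proof supplies the standard variational-equation argument that any such reference would use, and it is correct. You differentiate the flow equation in the initial condition at the singularity $p$, use that $\varphi_t(p)\equiv p$ so the linearization is evaluated at the fixed point for all $t$, observe that covariant and ordinary $t$-differentiation agree along a constant curve, and close by uniqueness for the linear constant-coefficient operator ODE $\dot Y=-AY$, $Y(0)=\Id$. That is exactly what is needed. One small slip to clean up: in the intrinsic computation you write $\Nabla{t}\xi=\Nabla{\xi}X=DX_p\,\xi=A\,\xi$ and then conclude $\dot\xi=-A\xi$; since $X=-\nabla f$ one has $DX_p=-D(\nabla f)_p=-A$, so the correct chain is $\Nabla{t}\xi=DX_p\,\xi=-A\,\xi$ directly, and the phrase ``after the sign convention'' should not be used to flip the sign after an equality that is already written. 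The final identity $\dot\xi=-A\xi$ is right; only the intermediate line needs its sign fixed.
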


\begin{proof}
See e.g.~\cite[Le.~2.5]{weber:2006b}.
\end{proof}

\begin{proposition}[Exponential estimates]\label{prop:exp-est}
Consider the Hessian operator $A=A^-\oplus A^+$
associated to a non-degenerate critical point
$x$ of Morse index $k$ and with induced splitting
$T_xM=E^-\oplus E^+$ provided by~(\ref{eq:splitting}). Fix a constant $\mu$
in the spectral gap $[0,d]$ of $A$. In this case
there are the estimates
$$
     \bigl\| e^{-tA^+}\bigr\|
     \le e^{-t\lambda_{k+1}}
     \le e^{-t\mu}
     ,\qquad
     \bigl\| e^{-tA^-}\bigr\|
     \le e^{-t\lambda_1},
$$
for every $t\ge 0$ and
$$
     \bigl\| e^{-tA^+}\bigr\|
     \le e^{-t\lambda_n}
     ,\qquad
     \bigl\| e^{-tA^-}\bigr\|
     \le e^{-t\lambda_k}
     \le e^{t\mu},
$$
for every $t\le 0$.
Thus
$
     \norm{e^{-tA}}
     \le\max\{e^{-t\lambda_{k+1}}, e^{-t\lambda_1}\}
     \le\max\{e^{-t\mu}, e^{-t\lambda_1}\}
$
whenever $t\ge 0$ and
$
     \norm{e^{-tA}}
     \le\max\{e^{-t\lambda_n}, e^{-t\lambda_k}\}
     \le\max\{e^{-t\lambda_n}, e^{t\mu}\}
$
whenever $t\le 0$.
\end{proposition}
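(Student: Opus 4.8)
The plan is to reduce everything to the spectral theorem for the symmetric operator $A$ together with the elementary fact that a matrix exponential is diagonal in an eigenbasis. First I would recall that, since $x$ is non-degenerate, the spectral theorem gives an orthonormal eigenbasis $e_1,\dots,e_n$ of $X=T_xM$ with $Ae_j=\lambda_je_j$ and eigenvalues ordered as in~(\ref{eq:spec-A}); the first $k$ span $E^-$ and the last $n-k$ span $E^+$. Applying the operator $e^{-tA}$ defined by the series~(\ref{eq:e^tA}) to $e_j$ yields $e^{-tA}e_j=e^{-t\lambda_j}e_j$, so in this orthonormal basis $e^{-tA}$ is the diagonal matrix $\diag(e^{-t\lambda_1},\dots,e^{-t\lambda_n})$, and its restrictions $e^{-tA^-}$ and $e^{-tA^+}$ to the invariant subspaces $E^-$ and $E^+$ are the corresponding sub-blocks. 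Since the operator norm of a diagonal matrix in an orthonormal basis equals the largest absolute value of its diagonal entries, and each $e^{-t\lambda_j}>0$, we get
\begin{equation*}
     \bigl\| e^{-tA^-}\bigr\|=\max_{1\le j\le k}e^{-t\lambda_j},
     \qquad
     \bigl\| e^{-tA^+}\bigr\|=\max_{k+1\le j\le n}e^{-t\lambda_j}.
\end{equation*}

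Next I would dispose of the one-sided bounds by monotonicity of $t\mapsto e^{-t\lambda}$ in $\lambda$. For $t\ge0$ the function $\lambda\mapsto e^{-t\lambda}$ is non-increasing, so on $E^+$, where all eigenvalues are $\ge\lambda_{k+1}$, the maximum is attained at $\lambda_{k+1}$, giving $\|e^{-tA^+}\|\le e^{-t\lambda_{k+1}}$; and on $E^-$, where all eigenvalues are $\le\lambda_1$ (recall $\lambda_1$ is the \emph{most negative}), the maximum is attained at $\lambda_1$, giving $\|e^{-tA^-}\|\le e^{-t\lambda_1}$. The inequality $e^{-t\lambda_{k+1}}\le e^{-t\mu}$ for $t\ge0$ is then immediate from $\mu\le d\le\lambda_{k+1}$ and $t\ge0$. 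For $t\le0$ the function $\lambda\mapsto e^{-t\lambda}$ is non-decreasing, so on $E^+$ the maximum is now at $\lambda_n$ and on $E^-$ at $\lambda_k$ (the eigenvalue closest to $0$ from below), yielding $\|e^{-tA^+}\|\le e^{-t\lambda_n}$ and $\|e^{-tA^-}\|\le e^{-t\lambda_k}$; and $e^{-t\lambda_k}\le e^{t\mu}$ follows since $-\lambda_k\le d\le\mu$ gives $-t\lambda_k=t(-\lambda_k)\le t\mu$ when... here one must be careful: $t\le0$, so multiplying the inequality $-\lambda_k\le\mu$ (both understood with $-\lambda_k=|\lambda_k|\ge d\ge\mu$, hence actually $-\lambda_k\ge\mu$) by $t\le0$ reverses it to $-t\lambda_k\le t\mu$, which is what we want. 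The combined bounds on $\|e^{-tA}\|=\|e^{-tA^-}\oplus e^{-tA^+}\|=\max\{\|e^{-tA^-}\|,\|e^{-tA^+}\|\}$ then follow by taking the maximum of the two pieces in each regime.

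The genuine content is entirely in the first paragraph — identifying $e^{-tA}$ with the diagonal operator and computing its norm — and this is where I would be most careful to use precisely the ordering convention~(\ref{eq:spec-A}), since the roles of $\lambda_1$ versus $\lambda_k$ on $E^-$ and of $\lambda_{k+1}$ versus $\lambda_n$ on $E^+$ swap between $t\ge0$ and $t\le0$; getting those matched correctly is the only place an error could creep in. The main obstacle, such as it is, is purely bookkeeping: keeping straight which endpoint of the eigenvalue range governs the norm in which sign regime, and remembering that multiplying an inequality by a negative $t$ flips it. Everything else is the spectral theorem plus $\|\diag(a_1,\dots,a_n)\|=\max_j|a_j|$ in an orthonormal basis, and the observation that the series~(\ref{eq:e^tA}) converges and commutes with the eigenprojections because $A$ does. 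No smoothness of $f$ beyond $C^{2}$ is needed, since only the linear operator $A$ at the single point $x$ enters.
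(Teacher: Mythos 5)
Your proof is correct and coincides with the paper's: the paper's one-sentence proof (``Eigenvectors associated to $\lambda_j$ and $e^{-t\lambda_j}$ can be chosen equal'') is exactly the diagonalization argument you spell out, and the rest is the monotonicity bookkeeping you carry out. One small slip in your write-up: on $E^-$ the eigenvalues satisfy $\lambda_j\ge\lambda_1$, not $\le\lambda_1$; your conclusion that $\max_{1\le j\le k}e^{-t\lambda_j}$ for $t\ge0$ is attained at $\lambda_1$ is still right, just as the self-caught sign confusion around $-\lambda_k\ge\mu$ resolves correctly.
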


\begin{proof}
Eigenvectors associated to $\lambda_j$ and $e^{-t\lambda_j}$ can be chosen equal.
\end{proof}

\subsubsection*{Exponential map}
Given any point $q$ of our Riemannian manifold $M$, we denote by $\exp_q:T_qM\supset \Oo_{\iota_q}\to M$
the associated exponential map. Here $\Oo_{\iota_q}$ is the open ball whose radius $\iota_q>0$ is the injectivity
radius of $\exp$ at the point $q$. The infimum $\iota\ge 0$ of $\iota_q$ over $M$ is called the
{\bf injectivity radius of \boldmath$M$}. The map
$$
     \exp:TM\supset\Oo_\iota\to M
$$
is called the {\bf exponential map} of the Riemannian manifold.
It is a piece of mathematical folklore that one can introduce
global maps $E_1$ and $E_2$ which can be viewed as partial derivatives
of the exponential map in the direction of $M$ and in the fiber direction, respectively.
Also derivatives $E_{ij}$ of second and higher order can be introduced; see e.g.~\cite[\S2.1]{weber:2014a}.

\begin{theorem}\label{thm:exp-map}
Given $u\in M$ and $\xi\in\Oo_{\iota_u}\subset 
T_uM$, then there are linear maps
$$
  E_i (u,\xi):T_uM\to T_{exp_u\xi}M ,\qquad
  E_{ij}(u,\xi):T_uM\times T_uM\to T_{exp_u\xi}M,
$$
for $i,j\in\{1,2\}$ such that
the following is true. If $u:\R\to M$ is a smooth curve
and $\xi,\eta$ are smooth vector fields along $u$
such that $\xi(s)\in\Oo_{\iota_{u(s)}}$ for every $s$,
then the maps $E_i$ and $E_{ij}$ are
{\bf characterized}\footnote{uniquely determined.}
by the identities
\begin{equation} \label{eq:exponential-identity}
\begin{split}
     \frac{d}{ds}\exp_u(\xi)
    &=E_1(u,\xi)\p_su
     +E_2(u,\xi)\Nabla{s}\xi
    \\
     \Nabla{s}\left( E_1(u,\xi)\eta\right)
    &=E_{11}(u,\xi)\left(\eta,\p_s u\right)
      +E_{12}(u,\xi)\left(\eta,\Nabla{s}\xi\right)
      +E_1(u,\xi)\Nabla{s}\eta
    \\
     \Nabla{s}\left( E_2(u,\xi)\eta\right)
    &=E_{21}(u,\xi)\left(\eta,\p_s u\right)
      +E_{22}(u,\xi)\left(\eta,\Nabla{s}\xi\right)
      +E_2(u,\xi)\Nabla{s}\eta.
\end{split}
\end{equation}
These maps satisfy the identities
\begin{equation}\label{eq:E_ij(0)}
     E_1(x,0)=E_2(x,0)=\1
     ,\quad
     E_{11}(x,0)=E_{21}(x,0)=E_{22}(x,0)=0.
\end{equation}
\end{theorem}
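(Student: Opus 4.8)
The plan is to construct the maps $E_i$ and $E_{ij}$ by fixing base points and viewing the exponential map fiberwise, then to establish the characterizing identities by a Leibniz/chain rule computation, and finally to verify the normalizations at the origin of $T_xM$.

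First I would construct $E_1$ and $E_2$. For a fixed $u\in M$ the exponential map $\exp_u:\Oo_{\iota_u}\to M$ is smooth, so its differential at $\xi$ is a linear map $d(\exp_u)_\xi:T_\xi(T_uM)\cong T_uM\to T_{\exp_u\xi}M$; this is the natural candidate for $E_2(u,\xi)$. To define $E_1(u,\xi)$ I would differentiate $\exp$ as a map on the total space $TM$: the point $(u,\xi)\in TM$ has tangent space splitting via the Levi-Civita connection into horizontal and vertical subspaces, each isomorphic to $T_uM$; the full differential $d\exp_{(u,\xi)}:T_uM\oplus T_uM\to T_{\exp_u\xi}M$ restricted to the horizontal summand gives $E_1(u,\xi)$ and restricted to the vertical summand recovers $E_2(u,\xi)$. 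With these definitions, the first identity in~(\ref{eq:exponential-identity}) is exactly the chain rule for the composite $s\mapsto(u(s),\xi(s))\mapsto\exp_{u(s)}\xi(s)$, once one checks that $\p_s(u,\xi)$ decomposes as $\p_s u$ (horizontal part) plus $\Nabla{s}\xi$ (vertical part), which is the defining property of the connection on $TM$.

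Next I would produce the second-order maps $E_{ij}$. The cleanest route is to \emph{define} them by demanding that the last two identities in~(\ref{eq:exponential-identity}) hold, and then check this is consistent. Concretely, fix $(u,\xi)$ and a curve $(u(s),\xi(s))$ through it with prescribed velocities $\p_su=X$, $\Nabla{s}\xi=Y$, and an extension $\eta(s)$ of a vector $\eta$; compute $\Nabla{s}(E_1(u,\xi)\eta)$ by differentiating the total-space differential $d\exp$ once more. The second covariant derivative of $\exp$ along the curve produces a sum of terms: one bilinear in $(\eta,X)$, one bilinear in $(\eta,Y)$, and one term $E_1(u,\xi)\Nabla{s}\eta$ coming from differentiating $\eta$ itself. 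Reading off the coefficients defines $E_{11}(u,\xi)(\eta,X)$ and $E_{12}(u,\xi)(\eta,Y)$; one must check these depend only on the \emph{values} $\eta,X,Y$ at $s$ and not on the chosen extensions, which follows because $\Nabla{s}$ is a genuine covariant derivative and any ambiguity is absorbed into the $\Nabla{s}\eta$ term. The same computation applied to $E_2$ yields $E_{21},E_{22}$, and tensoriality (hence well-definedness as a bilinear map) follows from the same argument.

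Finally the normalizations~(\ref{eq:E_ij(0)}): at $\xi=0$ the exponential map satisfies $\exp_x(0)=x$ and $d(\exp_x)_0=\id_{T_xM}$, giving $E_2(x,0)=\1$; and since $\exp_u(0)=u$ for all $u$, differentiating in the base direction gives $E_1(x,0)=\1$. For the second-order maps one differentiates a second time at the origin; because $\exp_x$ has vanishing first-order correction to the identity at $0$ (its second-order Taylor term at $0$ vanishes as the Christoffel symbols do in normal coordinates centered at $x$ — equivalently $(\Nabla{s}\Nabla{s}\exp)$ at $\xi\equiv0$ has no bilinear contribution), all three maps $E_{11}(x,0),E_{21}(x,0),E_{22}(x,0)$ vanish. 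I would phrase this invariantly rather than via coordinates: along the constant curve $u(s)\equiv x$, $\xi(s)\equiv0$, the identities~(\ref{eq:exponential-identity}) collapse and force the bilinear terms to vanish. The main obstacle I anticipate is the bookkeeping in the second-order step — keeping horizontal and vertical components separate while covariantly differentiating $d\exp$ a second time, and verifying tensoriality (independence of extensions) for each $E_{ij}$; this is conceptually routine but notationally delicate, and is presumably why the paper defers to the reference~\cite[\S2.1]{weber:2014a} for the detailed construction.
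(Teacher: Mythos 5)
The paper itself offers no proof of this theorem: it is presented as folklore and deferred entirely to~\cite[\S2.1]{weber:2014a}. Your construction -- $E_2(u,\xi)=d(\exp_u)_\xi$, $E_1(u,\xi)$ the horizontal partial derivative coming from the Levi-Civita splitting of $T_{(u,\xi)}TM$, the first identity as the chain rule combined with the fact that the connector sends $\tfrac{d}{ds}(u,\xi)$ to the pair $(\p_su,\Nabla{s}\xi)$, and the $E_{ij}$ obtained by covariantly differentiating $d\exp$ once more, with tensoriality checked by varying extensions -- is precisely the standard argument carried out in that reference, and it is correct up to the final step.

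The one genuine gap is your verification of $E_{11}(x,0)=E_{21}(x,0)=E_{22}(x,0)=0$. The ``invariant'' argument you propose -- take $u(s)\equiv x$ and $\xi(s)\equiv 0$ -- is vacuous: along that curve $\p_su=0$ and $\Nabla{s}\xi=0$, so every bilinear term in~(\ref{eq:exponential-identity}) is evaluated on a zero vector and the identities reduce to $\Nabla{s}(\1\,\eta)=\1\,\Nabla{s}\eta$, which holds no matter what the $E_{ij}$ are; nothing is forced. You need test curves along which the second argument of the relevant bilinear map is nonzero. For $E_{11}$ and $E_{21}$: take any curve $u(s)$ with $u(0)=x$ and $\p_su(0)=X\neq0$ together with the zero section $\xi(s)\equiv0$; since $E_1(u,0)=E_2(u,0)=\1$ for \emph{every} $u$ (because $\exp_u(0)=u$ and $d(\exp_u)_0=\id$), the left-hand sides equal $\Nabla{s}\eta$ and the identities force $E_{i1}(x,0)(\eta,X)=0$. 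For $E_{22}$: take $u(s)\equiv x$ and $\xi(s)=s\zeta$; the third identity then forces $E_{22}(x,0)(\eta,\zeta)$ to equal the covariant $s$-derivative at $s=0$ of $d(\exp_x)_{s\zeta}\eta$, which vanishes by the standard fact that the second derivative of $\exp_x$ at the origin is zero (Christoffel symbols vanish at the center of normal coordinates). Note that your normal-coordinates remark only covers this last, fiber-direction case; $E_{11}$ and $E_{21}$ involve base-direction derivatives and genuinely require the identity $E_i(u,0)=\1$ for all $u$ near $x$, not just at $x$. Note also that the theorem deliberately makes no claim about $E_{12}(x,0)$, so restricting to ``all three maps'' is consistent with the statement.
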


\subsubsection*{Tangent coordinates near critical point}
\begin{lemma}\label{le:conv-coord}
Pick $p\in Crit f$ and let $\iota_p>0$ be its
injectivity radius. Then in the neighborhood
$\exp_p\Oo_{\iota_p}$ of $p$ in $M$ the solutions
to $\dot u=-(\nabla f)\circ u$ correspond
via the identity $u(t)=\exp_p\xi(t)$ precisely to
the solutions of the ODE
\begin{equation}\label{eq:f}
     \dot\xi(t)+A\xi(t)=h(\xi(t))
     ,\qquad
     h(\xi)=-E_2(p,\xi)^{-1}(\nabla f)_{\exp_p\xi}
     +A\xi,
\end{equation}
where $\xi$ takes values in the open ball
$\Oo_{\iota_p}\subset T_p M$
and $h(\xi)$ abbreviates $h(\xi(t))$.
\end{lemma}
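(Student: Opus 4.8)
The plan is to verify the stated correspondence by a direct computation using the characterizing identities~(\ref{eq:exponential-identity}) of Theorem~\ref{thm:exp-map}. Fix $p\in\Crit f$ and let $u:[0,\infty)\to\exp_p\Oo_{\iota_p}$ be a curve. Since $\exp_p$ is a diffeomorphism from the ball $\Oo_{\iota_p}\subset T_pM$ onto its image, there is a unique curve $\xi:[0,\infty)\to\Oo_{\iota_p}$ with $u(t)=\exp_p\xi(t)$, and conversely any such $\xi$ gives such a $u$. So it suffices to translate the equation $\dot u=-(\nabla f)\circ u$ into an equation for $\xi$.

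First I would compute $\dot u(t)=\tfrac{d}{dt}\exp_p\xi(t)$. Here the ``base point'' of the exponential is the constant curve $s\mapsto p$, so in the first identity of~(\ref{eq:exponential-identity}) (with the curve played by $t\mapsto p$ and the vector field along it played by $\xi$) the term $E_1(p,\xi)\p_t p$ vanishes, leaving $\dot u(t)=E_2(p,\xi(t))\Nabla{t}\xi(t)$. Since $\xi$ is a curve in the single tangent space $T_pM$, the covariant derivative $\Nabla{t}\xi$ along the constant curve is just the ordinary derivative $\dot\xi(t)$; thus $\dot u=E_2(p,\xi)\dot\xi$. On the other hand the right-hand side is $-(\nabla f)\circ u=-(\nabla f)_{\exp_p\xi}$. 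Equating and applying the invertible operator $E_2(p,\xi)^{-1}$ — invertible because $E_2(p,0)=\1$ by~(\ref{eq:E_ij(0)}) and $E_2$ depends continuously on $\xi$, after shrinking $\iota_p$ if necessary so that $E_2(p,\xi)$ stays close to the identity — yields
\begin{equation*}
     \dot\xi(t)=-E_2(p,\xi(t))^{-1}(\nabla f)_{\exp_p\xi(t)}.
\end{equation*}
Finally I would add and subtract $A\xi$ on the right: writing $\dot\xi+A\xi=A\xi-E_2(p,\xi)^{-1}(\nabla f)_{\exp_p\xi}=h(\xi)$ with $h$ exactly as in~(\ref{eq:f}). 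This is the claimed ODE, and every step is reversible, so solutions correspond precisely.

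I do not expect a serious obstacle here; the content is essentially bookkeeping with the identities from Theorem~\ref{thm:exp-map}. The one point that deserves a word of care is the invertibility of $E_2(p,\xi)$ on the relevant ball: strictly speaking this may force one to take $\iota_p$ smaller than the injectivity radius, or to invoke a standard estimate on $E_2$ near the zero section; I would either absorb this into the choice of neighborhood or cite~\cite[\S2.1]{weber:2014a} where such bounds are established. A secondary remark worth including is that the non-linearity $h$ genuinely vanishes to first order at $\xi=0$: since $(\nabla f)_p=0$ and $D(\nabla f)_p=A$ while $E_2(p,0)=\1$, one checks $h(0)=0$ and $dh(0)=0$, which is what makes~(\ref{eq:f}) a genuine perturbation of the linear equation~(\ref{eq:lin-eq}) and is implicitly used in the Lipschitz Lemma~\ref{le:f}; but this is not needed for the statement as phrased and could be deferred.
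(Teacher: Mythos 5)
Your proposal is correct and follows essentially the same route as the paper: differentiate $u=\exp_p\xi$ to get $\dot u=E_2(p,\xi)\dot\xi$, equate with $-(\nabla f)\circ u$, and rearrange by adding zero (the paper adds $-E_2(p,\xi)A\xi$ before inverting $E_2$, you invert first and then add and subtract $A\xi$ — a trivial reordering). Your worry about invertibility of $E_2(p,\xi)$ is moot, since $\exp_p$ is a diffeomorphism on the whole ball $\Oo_{\iota_p}$ and hence $E_2(p,\xi)=d(\exp_p)_\xi$ is invertible there without shrinking.
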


\begin{proof}
Given any smooth curve $\xi$ in $\Oo_{\iota_p}$, set
$u(t)=\exp_p\xi(t)$. Take the derivative
to get that $\dot u=E_2(p,\xi)\dot\xi$
pointwise in $t$. Thus we obtain that
$$
     E_2(p,\xi)\dot\xi=\dot u
     =-(\nabla f)\circ u
     =-E_2(p,\xi)A\xi
     -\left((\nabla f)_{\exp_p\xi}-E_2(p,\xi)A\xi\right)
$$
pointwise in $t$ and where the last step is by
adding zero.
\end{proof}

\subsubsection*{Lipschitz continuity of the non-linearity}
\begin{lemma}[Lipschitz near $p\in\Crit f$]
\label{le:f}
   Recall that $f\in C^{r+1}(M,\R)$ for $r\ge 1$. Fix $p\in\Crit f$
   and let $\iota_p$ be its injectivity radius. Set $\rho_0:=\iota_p/4$. Then there is
   a continuous and non-decreasing function $\kappa:[0,\rho_0]\to[0,\infty)$
   vanishing at $0$ such that the following is true.
   The non-linearity $h:B_{\rho_0}\to T_pM$ given by~(\ref{eq:f}) is
   of class $C^r$. It satisfies $h(0)=0$ and $dh(0)=0$ and the Lipschitz estimate
   \begin{equation*}
     \Norm{h(\xi)-h(\eta)}
     \le \kappa(\rho)\Norm{\xi-\eta}
   \end{equation*}
   and its consequence
   \begin{equation}\label{cor:f}
     \Norm{dh(\xi)v}
     \le \kappa(\rho)\Norm{v}
   \end{equation}
   whenever $\norm{\xi},\norm{\eta}\le\rho\le\rho_0$ and $v\in T_pM$.
   If $f$ is of class $C^{2,1}$ locally near $p$,
   then there is a constant $\kappa_*>0$ such that $dh$ satisfies the Lipschitz estimate
   \begin{equation*}
   \begin{split}
     \Norm{dh(\xi)v-dh(\eta)v}
    &\le \kappa_*
     \Norm{\xi-\eta}
     \Norm{v},
   \end{split}
   \end{equation*}
   whenever $\norm{\xi},\norm{\eta} \le\rho\le\rho_0$ and $v\in T_pM$.
\end{lemma}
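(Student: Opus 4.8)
The plan is to prove Lemma~\ref{le:f} by unwinding the definition $h(\xi) = -E_2(p,\xi)^{-1}(\nabla f)_{\exp_p\xi} + A\xi$ and differentiating. First I would record that all ingredients are as smooth as claimed: $\exp_p$ is smooth, $\nabla f \in C^r$ since $f \in C^{r+1}$, and $\xi \mapsto E_2(p,\xi)$ is smooth by Theorem~\ref{thm:exp-map} (the $E_i$ are obtained from derivatives of $\exp$); moreover $E_2(p,0) = \1$ by~(\ref{eq:E_ij(0)}), so after shrinking to a ball $B_{\rho_0}$ with $\rho_0 = \iota_p/4$ the operator $E_2(p,\xi)$ is invertible with $\|E_2(p,\xi)^{-1}\|$ bounded, and $\xi \mapsto E_2(p,\xi)^{-1}$ is $C^r$ by Cramer's rule / the smoothness of matrix inversion. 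Hence $h \in C^r(B_{\rho_0}, T_pM)$. The normalization $h(0) = 0$ is immediate since $(\nabla f)_p = 0$ and $E_2(p,0)^{-1} = \1$; the identity $dh(0) = 0$ follows by differentiating and using $(\nabla f)_p = 0$ together with $E_1(x,0) = \1$, $A = D(\nabla f)_p$ from~(\ref{eq:Hess-op}) and the surrounding text, so that the derivative of $-E_2^{-1}(\nabla f)_{\exp_p\xi}$ at $0$ is $-A$, cancelling the $+A\xi$ term.

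Next I would derive the Lipschitz estimate for $h$. Since $h \in C^1(B_{\rho_0})$ and $dh(0) = 0$, the mean value inequality gives $\|h(\xi) - h(\eta)\| \le \bigl(\sup_{\|\zeta\| \le \rho} \|dh(\zeta)\|\bigr)\|\xi - \eta\|$ for $\|\xi\|, \|\eta\| \le \rho \le \rho_0$; defining $\kappa(\rho) := \sup_{\|\zeta\| \le \rho}\|dh(\zeta)\|$ yields the stated estimate, and~(\ref{cor:f}) is then immediate from $\|dh(\xi)v\| \le \kappa(\rho)\|v\|$. It remains to check that $\kappa$ can be taken continuous, non-decreasing, and vanishing at $0$: monotonicity is built into the supremum over a nested family of balls; continuity follows from continuity of $\zeta \mapsto \|dh(\zeta)\|$ on the compact balls $\overline{B_\rho}$ (a supremum of a continuous function over $\overline{B_\rho}$ depends continuously on $\rho$); and $\kappa(0) = \|dh(0)\| = 0$, which forces $\kappa(\rho) \to 0$ as $\rho \to 0$ by continuity. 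One small point to handle carefully: $\rho_0 = \iota_p/4$ rather than $\iota_p$ leaves room so that $\exp_p\xi$ and the relevant geodesics stay well inside the injectivity domain, which is what lets us bound $E_2^{-1}$ and its derivative uniformly; I would just state that $\rho_0 = \iota_p/4 \le 1$ (the hypothesis $\iota_p$ may be large, but if needed replace it by $\min\{\iota_p,4\}$) suffices for all the uniform bounds.

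For the final claim, assume $f \in C^{2,1}$ near $p$, i.e. $\nabla f \in C^{1,1}$, equivalently $\nabla f$ is $C^1$ with locally Lipschitz first derivative. Then $h$ is $C^{1,1}$ on (a possibly smaller) $B_{\rho_0}$: the map $\xi \mapsto (\nabla f)_{\exp_p\xi}$ is $C^{1,1}$ as a composition of the smooth $\exp_p$ with the $C^{1,1}$ map $\nabla f$, and $\xi \mapsto E_2(p,\xi)^{-1}$ is smooth, hence their product and $dh$ are locally Lipschitz; on the compact ball $\overline{B_{\rho_0}}$ this gives a uniform Lipschitz constant $\kappa_* := \mathrm{Lip}(dh; \overline{B_{\rho_0}})$, so $\|dh(\xi)v - dh(\eta)v\| \le \|dh(\xi) - dh(\eta)\|\,\|v\| \le \kappa_*\|\xi - \eta\|\,\|v\|$ as required.

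The routine parts are the compositions and the matrix-inversion smoothness; the only genuinely delicate step is verifying $dh(0) = 0$, which is the heart of why $h$ is the \emph{non-linearity}: one must expand $d\bigl(E_2(p,\cdot)^{-1}(\nabla f)_{\exp_p(\cdot)}\bigr)$ at $\xi = 0$, note that the term differentiating $E_2^{-1}$ is multiplied by $(\nabla f)_p = 0$ and so drops out, and that the surviving term is $E_2(p,0)^{-1} \circ d\bigl((\nabla f)_{\exp_p(\cdot)}\bigr)(0) = \1 \circ D(\nabla f)_p \circ E_1(x,0) = A$ using~(\ref{eq:exponential-identity})--(\ref{eq:E_ij(0)}) and $Av = D(\nabla f)_p v$, which exactly cancels $+A$. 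Everything else is bookkeeping with the supremum/Lipschitz constants over the compact balls $\overline{B_\rho} \subset B_{\rho_0}$.
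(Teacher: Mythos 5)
Your proposal is correct and follows essentially the same approach as the paper: both rely on a mean-value/Taylor argument to reduce the Lipschitz estimate to a bound on the differential over a ball, and both define $\kappa(\rho)$ as a supremum over balls that vanishes at $0$ because of the identities $E_2(p,0)=\1$, $E_{22}(p,0)=0$, $(\nabla f)_p=0$, $D(\nabla f)_p=A$. The only packaging difference is that you take $\kappa(\rho)=\sup_{\|\zeta\|\le\rho}\|dh(\zeta)\|$ abstractly (after first establishing $dh(0)=0$), whereas the paper differentiates the difference $I(X)=h(\xi)-h(\xi+X)$ term-by-term and defines $\kappa(\rho)$ via explicit $L^\infty(B_{3\rho})$ bounds on $E_2^{-1}$, $E_{22}$, $(\nabla f)_{\exp_p}$, and $E_2^{-1}D(\nabla f)_{\exp_p}E_2-A$ — slightly more explicit but logically equivalent.
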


\begin{proof}[Proof of the Lipschitz Lemma~\ref{le:f}]
That $f\in C^{r+1}$ implies $h\in C^r$ follows immediately from the
definition of $h$; see~(\ref{eq:f}).
Pick $\xi,\eta\in B_{\rho_0}$ and set $X:=\eta-\xi$.
It is useful to view $\xi$ as being fixed and 
$\eta(X)=\xi+X$ as depending on $X$.
Consider the map defined by
\begin{equation*}
\begin{split}
     I(X)
    :=h(\xi)-h(\eta)
   &=-E_2(p,\xi)^{-1}(\nabla f)_{\exp_p\xi}+A\xi\\
   &\quad
     +E_2(p,\xi+X)^{-1}(\nabla f)_{\exp_p(\xi+X)}-A(\xi+X)
\end{split}
\end{equation*}
and note that $I(0)=0$. Moreover, for any
$v\in T_pM$ it holds that
\begin{equation*}
\begin{split}
     dI(X)v
   &=\left.\tfrac{D}{d\tau}\right|_0 I(X+\tau v)\\
   &=-E_2(p,\xi+X)^{-1}E_{22}(p,\xi+X)
     \left[(\nabla f)_{\exp_p(\xi+X)},E_2(p,\xi+X)^{-1}v\right]
     \\
   &\quad
     +E_2(p,\xi+X)^{-1} D(\nabla f)_{\exp_p(\xi+X)}
     E_2(p,\xi+X) v -Av 
\end{split}
\end{equation*}
where brackets $[\dots]$ indicate (multi)linearity.
Thus we obtain that
\begin{equation*}
\begin{split}
     \norm{h(\xi)-h(\eta)}
   &=\norm{I(X)}
     =\norm{I(X)-I(0)}
     =\norm{dI(\sigma X)X}\\
   &\le
     \norm{{E_2}^{-1}}_{L^\infty(B_{3\rho_0})}^2
     \norm{E_{22}}_{L^\infty(B_{3\rho})}
     \norm{(\nabla f)_{\exp_p}}_{L^\infty(B_{3\rho})}
     \norm{X}\\
   &\quad
     +\norm{{E_2}^{-1} D(\nabla f)_{\exp_p}
     E_2-A}_{L^\infty(B_{3\rho})}\norm{X}\\
   &=:\kappa(\rho) \norm{\xi-\eta}
\end{split}
\end{equation*}
where for instance $E_2$ abbreviates $E_2(p,\cdot)$
and $L^\infty$ denotes the sup-norm.
Moreover, existence of some constant
$\sigma\in[0,1]$ is asserted by Taylor's theorem.
That the function $\kappa(\rho)$ is non-decreasing
is due to the fact that the supremum is taken over
the closed ball of radius $3\rho$. Indeed the
supremum is obviously non-decreasing if it is taken
over larger balls, that is if $\rho$ grows.
For $\rho=0$ all maps are evaluated at the origin
of $T_pM$, thus $\kappa(0)=0$ since\footnote{
  In fact, we don't even need to use the identity
  $E_{22}(p,0)=0$ since anyway $(\nabla f)_p=0$.
  }
$E_{22}(p,0)=0$ and $E_2(p,0)=\1$
by~(\ref{eq:E_ij(0)}) and since $D(\nabla f)_p-A=0$.
This proves the Lipschitz estimate. Concerning its consequence recall that
$dh(\xi)v=\lim_{\tau\to  0}\frac{h(\xi+\tau v)-h(\xi)}{\tau}$
and apply the Lipschitz estimate.

Concerning the Lipschitz estimate for $dh$ note that
by~(\ref{eq:f}) we get that
\begin{equation*}
\begin{split}
     dh(\xi)v=\left.\tfrac{D}{d\tau}\right|_0 h(\xi+\tau v)
   &=E_2(p,\xi)^{-1}E_{22}(p,\xi)
     \left[(\nabla f)_{\exp_p(\xi)},E_2(p,\xi)^{-1}v\right]
     \\
   &\quad
     -E_2(p,\xi)^{-1} D(\nabla f)_{\exp_p(\xi)} E_2(p,\xi) v +Av.
\end{split}
\end{equation*}
Since by assumption $D(\nabla f)_{\exp_p(\cdot)}$ is locally Lipschitz near the origin, so is $dh$.
\end{proof}

\subsubsection*{Cauchy problem and integral equation}
\begin{proposition}\label{prop:representation-formula}
    Given a non-degenerate critical point $x\in\Crit f$, let $\iota_x$ be
    its injectivity radius and consider the ODE~(\ref{eq:f})
    in $T_xM$ with non-linearity $h$. Fix a radius
    \begin{equation}\label{eq:rho_0}
        \rho_0\in\left(0,\min\{1,\iota_x/4\}\right]
    \end{equation}
    sufficiently small such that the image
    $\exp_x B_{\rho_0}\subset M$ of the closed
    ball $B_{\rho_0}\subset T_xM$
    contains no critical point of $f$ other than $x$.
    Pick $T\ge0$ and assume that
    $\xi:[0,T]\to T_xM$ is a map bounded by $\rho_0$.
    Then the following are equivalent. 
\begin{enumerate}
  \item[\rm (a)] 
    The map $\xi:[0,T]\to T_xM$ of class $C^1$ is the
    (unique) solution of the Cauchy problem given by
    the localized downward gradient flow
    equation~(\ref{eq:f}) with initial value $\xi(0)$.

  \item[\rm (b)]
    The map $\xi:[0,T]\to T_xM$ is
    continuous and satisfies the integral equation
    or {\bf representation formula}
    \begin{equation}\label{eq:representation-formula}
    \begin{split}
      \xi(t) 
     &=e^{-tA}\pi_+ \xi(0)
      +\int_0^t e^{-(t-\sigma)A}
      \pi_+h(\xi(\sigma))\, d\sigma
      \\
     &\quad
      +e^{-(t-T)A^-}\pi_- \xi(T)
      -\int_t^T e^{-(t-\sigma)A^-}
      \pi_-h(\xi(\sigma))\, d\sigma
    \end{split}
    \end{equation}
    for every $t\in[0,T]$.
    In the limit $T\to\infty$ term three in the sum
    disappears.
\end{enumerate}
\end{proposition}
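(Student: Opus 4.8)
The plan is to prove the two implications separately, using the variation-of-constants (Duhamel) formula for the linear part $-A$ together with the spectral splitting $E^-\oplus E^+$, treating the $\pi_+$-component as a genuine forward Cauchy problem and the $\pi_-$-component as a terminal-value problem on $[0,T]$. First observe that along $[0,T]$ the curve $\xi$ stays in $B_{\rho_0}$, so by the choice~(\ref{eq:rho_0}) of $\rho_0$ the non-linearity $h$ is defined and (by Lemma~\ref{le:f}) $C^r$, hence at least $C^1$, along $\xi$; in particular the integrands in~(\ref{eq:representation-formula}) are continuous whenever $\xi$ is continuous, so the right-hand side of~(\ref{eq:representation-formula}) makes sense.

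For (a) $\Rightarrow$ (b): assume $\xi\in C^1([0,T],T_xM)$ solves $\dot\xi+A\xi=h(\xi)$. Apply the projections $\pi_\pm$, which commute with $A$ and hence with $e^{-sA}$, to split the equation into $\dot\xi_+ + A^+\xi_+ = \pi_+ h(\xi)$ and $\dot\xi_- + A^-\xi_- = \pi_- h(\xi)$, where $\xi_\pm:=\pi_\pm\xi$. For the $\xi_+$-equation, differentiate $s\mapsto e^{sA^+}\xi_+(s)$ to get $\frac{d}{ds}\bigl(e^{sA^+}\xi_+(s)\bigr)=e^{sA^+}\pi_+h(\xi(s))$, integrate from $0$ to $t$, and multiply by $e^{-tA^+}$ to obtain the first line of~(\ref{eq:representation-formula}). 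For the $\xi_-$-equation, do the same computation but integrate from $t$ to $T$ (i.e.\ solve backward from the terminal value $\xi_-(T)$): $e^{TA^-}\xi_-(T)-e^{tA^-}\xi_-(t)=\int_t^T e^{sA^-}\pi_-h(\xi(s))\,ds$, and multiply by $e^{-tA^-}$ to get the second line, with the sign as written. Here one uses that $e^{-sA^-}$ is well-defined for all $s\in\R$ because $A^-$ acts on the finite-dimensional space $E^-$ (this is exactly the point noted before the proposition, so no backward Cauchy problem is invoked). Adding the two components gives~(\ref{eq:representation-formula}); the $T\to\infty$ claim follows since, once exponential decay is known (Proposition~\ref{prop:exp-est}), the term $e^{-(t-T)A^-}\pi_-\xi(T)$ is controlled by $e^{-(T-t)\mu}$ and vanishes — but for the bare equivalence one simply notes that the representation formula is stated on $[0,T]$ for each finite $T$ and the third term is absent in the stable-manifold ($T=\infty$) version by definition.

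For (b) $\Rightarrow$ (a): suppose $\xi$ is continuous, bounded by $\rho_0$, and satisfies~(\ref{eq:representation-formula}). Since $h\circ\xi$ is then continuous, the two integral terms are $C^1$ in $t$ (fundamental theorem of calculus) and the two linear terms $e^{-tA}\pi_+\xi(0)$ and $e^{-(t-T)A^-}\pi_-\xi(T)$ are smooth in $t$; hence $\xi\in C^1([0,T],T_xM)$. Differentiate~(\ref{eq:representation-formula}): for the $\pi_+$-part, $\frac{d}{dt}\bigl(e^{-tA^+}\pi_+\xi(0)+\int_0^t e^{-(t-\sigma)A^+}\pi_+h(\xi(\sigma))\,d\sigma\bigr) = -A^+\bigl(\cdots\bigr)+\pi_+h(\xi(t))$, i.e.\ $\dot\xi_+ = -A^+\xi_+ + \pi_+h(\xi)$; for the $\pi_-$-part a parallel computation (the boundary term at $\sigma=t$ now comes with the sign matching the $-\int_t^T$) gives $\dot\xi_- = -A^-\xi_- + \pi_-h(\xi)$. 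Summing, $\dot\xi+A\xi=h(\xi)$, so $\xi$ solves~(\ref{eq:f}); uniqueness for the resulting Cauchy problem with initial value $\xi(0)$ is standard since $h$ is locally Lipschitz by Lemma~\ref{le:f}. Evaluating~(\ref{eq:representation-formula}) at $t=0$ recovers $\pi_+\xi(0)$ on the $E^+$-side and an identity on the $E^-$-side, consistent with $\xi(0)$; the point is that $\xi(0)$ and $\xi(T)$ together serve as the data of the mixed problem.

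The computations themselves are routine Duhamel manipulations; the one place to be careful — and the main conceptual point rather than a genuine obstacle — is bookkeeping the direction of integration and the resulting signs in the $E^-$-component, and making explicit that $e^{-sA^-}$ for $s<0$ is legitimate because $A^-$ is a finite-dimensional (indeed invertible, negative-definite) operator, so that the backward propagation on $E^-$ is purely algebraic and does not violate Convention~\ref{con:back-flow}. Everything else (continuity of $h\circ\xi$, $C^1$-regularity via the fundamental theorem of calculus, uniqueness via Lipschitz $h$) is immediate from Lemma~\ref{le:f} and the choice of $\rho_0$ in~(\ref{eq:rho_0}).
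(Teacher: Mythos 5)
Your proof is correct and takes exactly the approach the paper intends: the paper's own proof consists only of the remark ``Essentially variation of constants'' together with two textbook citations, and your argument fills in precisely that Duhamel computation, splitting along $E^-\oplus E^+$, treating the $E^+$-part as an initial value problem and the $E^-$-part as a terminal value problem, and using that $e^{-sA^-}$ on the finite-dimensional $E^-$ is a matrix exponential so no backward Cauchy problem is invoked.
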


\begin{proof}
Essentially variation of constants; cf.~\cite[\S 9.2]{teschl:2012a} or~\cite[\S 7.3]{jost:2011a}.
\end{proof}

\section{Invariant manifolds}\label{sec:inv-mfs}
Suppose $x\in M$ is a hyperbolic singularity of the
vector field $X=-\nabla f$, that is a non-degenerate critical point of $f$.
The {\bf stable manifold} of the flow-invariant set $\{x\}$ is defined by
\begin{equation}\label{eq:st-mf}
     W^s(x)
     :=\bigl\{q\in M\,\big|\,\text{$\lim_{t\to\infty}\varphi_tq$
     exists and is equal to $x$}\bigr\}.
\end{equation}
In case of a genuine complete flow one simply considers the limit $t\to-\infty$
to define the {\bf unstable manifold} $W^u(x)$.
Note that -- despite the naming -- these sets are, at this stage, nothing but  sets.
They are invariant under the flow though.
A common strategy to endow them with a differentiable
structure is to represent them locally near $x$ as
graphs of differentiable maps and then use the flow
and flow invariance to transport the resulting coordinate
charts to any location on the un/stable manifold.
To carry out the graph construction one introduces in an
intermediate step what is called \emph{local} un/stable manifolds.


\subsection*{Unstable manifold theorem}
In view of our Convention~\ref{con:back-flow} to ignore the backward
Cauchy problem, already defining $W^u(x)$ by the analogue
of~(\ref{eq:st-mf}) is not possible. A way out is to consider an asymptotic
boundary value problem instead: Consider the set of
all forward semi-flow lines that emanate at time $-\infty$ from the non-degenerate
critical point $x$, then evaluate each such solution at time zero and
define $W^u(x)$ to be the set of all these evaluations. In symbols,
the {\bf unstable manifold} of $x$ is defined by
\begin{equation}\label {eq:unst-mf}
     W^u(x)
     :=\bigl\{u(0)\,\big|\,\text{$u:(-\infty,0]\to M$,
     (\ref{eq:DGE}),
     $\lim_{t\to-\infty} u(t)=x$}\bigr\}.
\end{equation}
It is non-empty since the constant trajectory $u\equiv x$
contributes the element $x$.

\begin{theorem}[Unstable manifold theorem]\label{thm:unst-mf-thm}
Non-degeneracy of $x$ together with $\varphi$ being a gradient flow
implies that the unstable manifold $W^u(x)$
is an embedded submanifold of $M$ of class $C^r$
tangent at $x$ to the vector subspace $E^-\subset T_xM$
of dimension $k=\IND(x)$ and diffeomorphic to $E^-$.
\end{theorem}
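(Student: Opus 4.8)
The plan is to prove the unstable manifold theorem by the contraction method, exploiting that, in the unstable direction, the ``backward'' flow is governed by the matrix exponential $e^{-tA^-}$ and so requires no backward Cauchy problem (cf.\ Convention~\ref{con:back-flow}). Concretely, I would work in the tangent coordinates of Lemma~\ref{le:conv-coord}, so that solutions near $x$ are solutions $\xi:(-\infty,0]\to B_{\rho_0}\subset T_xM$ of $\dot\xi+A\xi=h(\xi)$ with $\xi(t)\to 0$ as $t\to-\infty$. Letting $T\to\infty$ in the representation formula~(\ref{eq:representation-formula}) (with roles of forward/backward interchanged, i.e.\ applied to the time-reversed picture where $E^-$ plays the role of the contracted direction), a bounded solution decaying at $-\infty$ must satisfy the integral equation
\begin{equation*}
  \xi(t)=e^{-tA^-}a-\int_t^0 e^{-(t-\sigma)A^-}\pi_-h(\xi(\sigma))\,d\sigma
         +\int_{-\infty}^t e^{-(t-\sigma)A^+}\pi_+h(\xi(\sigma))\,d\sigma,
\end{equation*}
where $a:=\pi_-\xi(0)\in E^-$ is a free parameter and the $E^+$-part is forced. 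The exponential estimates of Proposition~\ref{prop:exp-est} guarantee that each integral converges: $\|e^{-(t-\sigma)A^-}\|\le e^{\mu(t-\sigma)}$ for $\sigma\ge t$ and $\|e^{-(t-\sigma)A^+}\|\le e^{-\mu(t-\sigma)}$ for $\sigma\le t$, with $\mu$ strictly inside the spectral gap.

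Next I would set up the fixed-point argument. Fix $\lambda\in(0,d)$ as in Definition~\ref{def:loc-coord-lambda}. On the Banach space $\Xx_\lambda$ of continuous maps $\xi:(-\infty,0]\to T_xM$ with finite norm $\|\xi\|_\lambda:=\sup_{t\le 0}e^{\lambda t}\|\xi(t)\|$, and for each small $a\in E^-$, define $\Psi_a(\xi)$ to be the right-hand side of the displayed integral equation. Using the Lipschitz Lemma~\ref{le:f}, on the ball $\{\|\xi\|_\lambda\le\rho\}$ one has $\|h(\xi(\sigma))-h(\eta(\sigma))\|\le\kappa(\rho)\|\xi(\sigma)-\eta(\sigma)\|$, and combining this with the exponential estimates yields $\|\Psi_a(\xi)-\Psi_a(\eta)\|_\lambda\le C\kappa(\rho)\|\xi-\eta\|_\lambda$; choosing $\rho\le\rho_0$ small enough (shrinking $\rho_0$ if necessary, as permitted by Proposition~\ref{prop:representation-formula}) makes $C\kappa(\rho)\le\tfrac12$, so $\Psi_a$ is a contraction and maps the ball into itself for $\|a\|$ small. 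The unique fixed point $\xi=\xi(a)$ is the unstable trajectory with $\pi_-\xi(0)=a$; evaluating at $t=0$ gives the graph map $F:=\pi_+\xi(\cdot)(0):B^-_\varsigma\to E^+$, whose graph $\{(a,F(a))\}$ is $W^u(x)$ in these local coordinates, tangent at the origin to $E^-$ since $dh(0)=0$ forces $dF(0)=0$.

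Then I would upgrade regularity. Smoothness of $a\mapsto\xi(a)$ in $\Xx_\lambda$ follows from the uniform contraction principle with parameters: $\Psi_a(\xi)$ is $C^r$ jointly (as $h\in C^r$ by Lemma~\ref{le:f}), and the fibre contraction theorem, applied inductively, yields $\xi(a)$ of class $C^r$ in $a$, hence $F\in C^r$; the formal derivative equations are obtained by differentiating the integral equation, with convergence of the resulting integrals again controlled by Proposition~\ref{prop:exp-est} (one needs $\lambda$ slightly less than the relevant eigenvalue gaps so that the $j$-th derivative still lies in a weighted space — this is why one fixes $\lambda\in(0,d)$ with room to spare). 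This gives the \emph{local} unstable manifold $W^u_{\mathrm{loc}}(x)$ as a $C^r$ embedded disk. Finally, to pass from local to global, one uses flow invariance: $W^u(x)=\bigcup_{t\ge0}\varphi_t\bigl(W^u_{\mathrm{loc}}(x)\bigr)$ because every solution emanating from $x$ enters the local model in backward time, and the forward flow $\varphi_t$ is a $C^r$ diffeomorphism of $M$ (from $f\in C^{r+1}$); transporting the chart by $\varphi_t$ equips $W^u(x)$ with the structure of a $C^r$ embedded submanifold diffeomorphic to $E^-$, the embeddedness being the one point needing a small separate argument (no returns, using that $f$ is strictly decreasing along non-constant trajectories).

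The main obstacle I expect is the regularity bookkeeping in the weighted spaces: ensuring that one can choose a single exponent $\lambda$ (or a finite decreasing sequence of exponents $\lambda>\lambda_2>\cdots>\lambda_r$) for which all of $\Psi_a$, its fixed point, and the derivative maps up to order $r$ live in the appropriate $\Xx_{\lambda_j}$ and the fibre-contraction induction closes. The contraction and continuity steps are routine given Lemma~\ref{le:f} and Proposition~\ref{prop:exp-est}; the $C^r$ differentiability and the clean local-to-global gluing (embeddedness, completeness of the flow) are where the care goes.
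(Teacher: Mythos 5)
Your proposal follows essentially the same route as the paper: the local unstable manifold is obtained as the fixed point of exactly the contraction $\Phi_{z_-}$ of~(\ref{eq:Phi}) on the weighted space $Z^-$ of backward half-trajectories, using only the matrix exponential $e^{-tA^-}$ on $E^-$ (so no backward Cauchy problem), with the graph map $F^\infty(z_-)=\pi_+(\eta_{z_-}(0))$, $C^r$ dependence from the uniform/fibre contraction principle, and the global statement obtained by pushing the local chart forward with $\varphi_t$ and invoking the gradient-flow (no-return) property for embeddedness. One sign slip to fix: your weight should be $\sup_{t\le 0}e^{-\lambda t}\|\xi(t)\|$ (as in the paper's $\|\cdot\|_{\exp}^-$), not $\sup_{t\le 0}e^{\lambda t}\|\xi(t)\|$ -- with your convention the ball $\{\|\xi\|_\lambda\le\rho\}$ admits curves growing like $e^{\lambda|t|}$ as $t\to-\infty$, which leave $B_{\rho_0}$ and do not decay, so the self-mapping and decay claims would fail as written.
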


\begin{corollary}[Descending disks]\label{cor:desc-disk}
Given $x\in\Crit f$ non-degenerate, then
there is a constant $\eps_x>0$ such that the following is true.
\begin{enumerate}
\item[{\rm a)}]
  Each {\bf descending disk} defined by
  \begin{equation}\label{eq:def-desc-disk}
     W^u_\eps(x):=W^u(x)\cap\{f\ge f(x)-\eps\},\qquad
     \eps\in(0,\eps_x],
  \end{equation}
  is $C^r$ diffeomorphic, as a manifold-with-boundary,
  to the closed unit disk $\D^k\subset\R^k$ where $k=\IND(x)$.
  The boundary
  $$
     S^u_\eps(x)=W^u(x)\cap\{f= f(x)-\eps\}
  $$
  of a descending disk is called a {\bf descending sphere}.
\item[{\rm b)}]
  Each open neighborhood of $x$ in $M$,
  thus each open neighborhood of $x$ in $W^u(x)$,
  contains a descending disk.
\end{enumerate}
\end{corollary}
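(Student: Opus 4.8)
The plan is to deduce Corollary~\ref{cor:desc-disk} directly from Theorem~\ref{thm:unst-mf-thm} together with elementary Morse-theoretic facts about the behaviour of $f$ along the flow on $W^u(x)$. By the unstable manifold theorem, $W^u(x)$ is an embedded $C^r$ submanifold of $M$, diffeomorphic to $E^-\cong\R^k$ via a chart tangent to $E^-$ at $x$; transport the picture to $E^-$ and work there. The restriction $f|_{W^u(x)}$ is a $C^{r+1}$ function whose only critical point is $x$ (since $\nabla f$ is tangent to $W^u(x)$ and vanishes only at $x$ on $W^u(x)$), and at $x$ it has a strict local maximum because $A^-=A|_{E^-}$ is negative definite, so $f(x)$ is the absolute maximum of $f$ on $W^u(x)$ and $f|_{W^u(x)}<f(x)$ away from $x$. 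Moreover along any nonconstant trajectory $u$ in $W^u(x)$ one has $\frac{d}{dt}f(u(t))=-\|\nabla f(u(t))\|^2<0$, so $f$ is strictly decreasing, $f(u(t))\to f(x)$ as $t\to-\infty$, and $f(u(t))\to$ (the value at the $\omega$-limit, which is $<f(x)$ or $-\infty$) as $t\to+\infty$.

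First I would use a Lyapunov/Morse-lemma argument near $x$: in the unstable chart, after possibly shrinking, one can use the flow to see that for $\eps_x>0$ small enough, $\{f\ge f(x)-\eps\}\cap W^u(x)$ is, for every $\eps\in(0,\eps_x]$, exactly the set of points whose backward trajectory stays near $x$ and whose forward trajectory first hits the level $f(x)-\eps$; equivalently it is the "time $\le$ first-exit" region. The key step is then a flow-box style deformation-retraction argument. Because $-\nabla f$ points strictly \emph{into} $\{f\ge f(x)-\eps\}$ transversally along the regular level $S^u_\eps(x)=\{f=f(x)-\eps\}\cap W^u(x)$ (regular since $f(x)-\eps$ is a regular value of $f|_{W^u(x)}$ once $\eps<f(x)-\sup$ of critical values below, here there are none, so any $\eps\in(0,\eps_x]$ works), the backward flow $\psi_t$ on $W^u(x)$ gives a diffeomorphism from $S^u_\eps(x)\times[0,\infty)$ onto $W^u_\eps(x)\setminus\{x\}$, and adding the point $x$ "at $t=\infty$" one identifies $W^u_\eps(x)$ with the cone on $S^u_\eps(x)$. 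Since $S^u_\eps(x)$ is a compact codimension-one submanifold of $W^u(x)\cong\R^k$ lying in a small ball around the origin and bounding the region containing the origin, it is a smoothly embedded sphere $S^{k-1}$ (for $\eps_x$ small this follows from the Morse lemma applied to $f|_{W^u(x)}$ at its nondegenerate maximum $x$, which gives $S^u_\eps(x)$ diffeomorphic to an ellipsoid, hence to $S^{k-1}$), and its cone is $C^r$ diffeomorphic to $\D^k$ as a manifold-with-boundary. This proves a).

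For b), given an open neighbourhood $U$ of $x$ in $M$, intersect with $W^u(x)$ to get an open neighbourhood of $x$ in $W^u(x)$; since $f|_{W^u(x)}$ is continuous with strict maximum $f(x)$ at $x$, the sublevel-from-above sets $\{f\ge f(x)-\eps\}\cap W^u(x)$ shrink to $\{x\}$ as $\eps\downarrow 0$ (their diameters tend to $0$ by the Morse lemma / compactness of nearby level sets), so for $\eps>0$ small enough $W^u_\eps(x)\subset U$; shrinking $\eps$ further than $\eps_x$ if necessary keeps it a disk by a). Finally, take $\eps_x$ to be the minimum of the radius-of-validity of the Morse chart and a value ensuring all the above transversality statements; this is the constant asserted.

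The main obstacle is the global topology of $S^u_\eps(x)$: a priori $S^u_\eps(x)$ is only \emph{some} compact hypersurface in $W^u(x)$, and one must rule out extra components or wrong topology. This is where smallness of $\eps_x$ is essential — one invokes the Morse lemma for $f|_{W^u(x)}$ at the nondegenerate local maximum $x$ to get, on a small ball, coordinates in which $f$ is $f(x)-|y|^2$, so every level $\{f=f(x)-\eps\}$ inside that ball is a genuine round sphere and every $\{f\ge f(x)-\eps\}$ a genuine disk; one then only has to check that for $\eps\le\eps_x$ the whole set $W^u_\eps(x)$ is contained in that ball, which follows since no trajectory leaving the ball can return to a level $\ge f(x)-\eps$ (as $f$ is strictly decreasing along trajectories and $<f(x)-\eps_x$ outside a slightly larger ball by the strict-maximum property). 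Everything else is routine: smoothness of the cone coordinates from the flow box, and the monotonicity of $f$ along flow lines.
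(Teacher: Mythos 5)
Your overall strategy is in the right spirit and the auxiliary observations ($x$ is the unique critical point of $f|_{W^u(x)}$, $f$ has a strict absolute maximum $f(x)$ on $W^u(x)$, monotonicity along flow lines, containment of $W^u_\eps$ in the Morse ball for $\eps\le\eps_x$, and the argument for b)) are all correct and essentially what is needed. However, there is a genuine gap exactly where the paper places a warning. Your argument produces the $C^r$ disk structure by one of two routes, and neither actually delivers $C^r$. First, the Morse Lemma for a $C^r$ function yields a chart that is only $C^{r-2}$ (or $C^{r-1}$ with effort), so the ``genuine round sphere'' and ``genuine disk'' you obtain from the normal form $f(x)-|y|^2$ are only $C^{r-2}$ diffeomorphic to the standard ones. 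The paper's proof sketch explicitly says to use the Morse Lemma ``only to control the locus'' precisely because of this regularity loss. Second, your flow-box/cone argument gives a $C^r$ diffeomorphism $S^u_\eps\times[0,\infty)\cong W^u_\eps\setminus\{x\}$, but passing to the cone and ``adding $x$ at $t=\infty$'' only yields a homeomorphism: the cone coordinate $(\theta,t)$ does not define any $C^r$ chart at the apex, so the claim ``its cone is $C^r$ diffeomorphic to $\D^k$'' is unjustified as written.

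The paper's route sidesteps this: $W^u_\eps$ acquires its $C^r$ manifold-with-boundary structure from the transversality theorem for manifolds-with-boundary (the boundary $S^u_\eps$ is a regular level set of the $C^r$ function $f|_{W^u(x)}$, and the interior inherits the $C^r$ structure of $W^u(x)$ from the unstable manifold theorem); the Morse Lemma is then used only to identify the topological type. To close the gap in your version you would need to do the same -- obtain the $C^r$ structure from transversality plus the $C^r$ chart of Theorem~\ref{thm:unst-mf-thm} at $x$ -- and then upgrade the low-regularity Morse-chart identification with $\D^k$ to a $C^r$ diffeomorphism by smoothing theory (e.g.\ Whitney approximation of $C^1$ diffeomorphisms by $C^r$ ones), a step that even the paper's terse suggestion leaves implicit. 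A separate small slip: $-\nabla f$ points \emph{out} of $\{f\ge f(x)-\eps\}$ along $S^u_\eps$, not into it; it is the backward flow that points inward, which is what your cone construction actually uses.
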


\begin{proof}[Suggestion for proof.]
Apply Theorem~\ref {thm:unst-mf-thm} and the version
of the Transversality Theorem for manifolds-with-boundary, see~\cite[Ch.~1 \S 4]{hirsch:1976a},
to obtain the $C^r$ manifold structure of the descending disk.
Use the Morse Lemma~\cite{hirsch:1976a}, which causes loss
of regularity, only to control the locus.
\end{proof}

The proof of the unstable manifold Theorem~\ref{thm:unst-mf-thm}
is a Corollary of the local unstable manifold
Theorem~\ref{thm:loc-unst-mf-thm} below. The standard argument is
to use the forward flow to move the coordinate charts provided by
Theorem~\ref{thm:loc-unst-mf-thm} near $x$ to any point of $W^u(x)$.
This shows that $W^u(x)$ is injectively immersed. Now exploit the
gradient flow property. To prove Theorem~\ref{thm:loc-unst-mf-thm}
we need a backward flow, but only on the unstable
manifold, which is coherent with Convention~\ref{con:back-flow}.

\begin{definition}[{Algebraic\footnote{
  The wording ``algebraic'' backward flow is
  only meant to indicate that no backward Cauchy problem
  is involved in its definition. It arises naturally along the
  unstable manifold each of whose points has a past by definition.
  Thus along $W^u(x)$ it turns into a genuine flow.
  }} backward flow on unstable manifold]
\label{def:alg-back-flow}
Given $u(0)\in W^u(x)$, set $q:=u(0)$ and define
$\psi_{-t} q:=u(-t)$ for $t\ge 0$. Note that
$\psi_{-t}\psi_{-s} =\psi_{-t-s}$ and that $\psi_{-t} q$
solves the backward time Cauchy
problem~(\ref{eq:DGE}) with initial value $q$.
Therefore it doesn't violate
Convention~\ref{con:back-flow} if, for any $q\in W^u(x)$,
we use the notation $\varphi_tq$ for any time $t$, positive or negative.
\end{definition}

While $W^u(x)$ is obviously backward and forward
flow invariant, a descending disk still is backward invariant
since its boundary lies in a level set.

\subsubsection*{Local unstable manifold theorem}
\label{sec:loc-unst-mf}
We wish to prove that $W^u(x)$ carries locally near $x$ the structure of a manifold. 

\begin{figure}
\hfill
\begin{minipage}[b]{.41\linewidth}
  \centering
  \includegraphics
                             {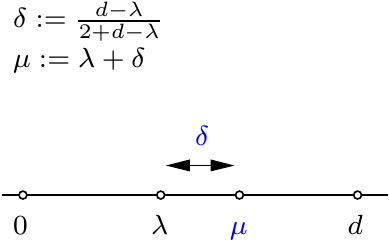}
  \caption{Spectral gap $d$} 
  \label{fig:fig-spectral-gap}
\end{minipage}
\hfill
\begin{minipage}[b]{.57\linewidth}
  \centering
  \includegraphics
                             {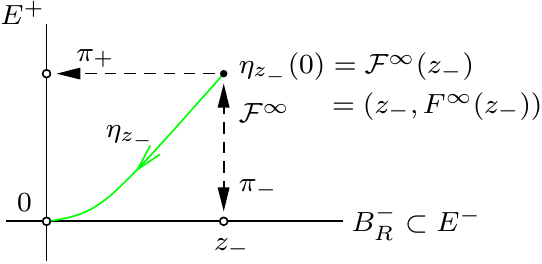}
  \caption{Graph map $\Ff^\infty:B^-_R\to T_xM$}
  \label{fig:fig-graph-map-F}
\end{minipage}
\hfill
\end{figure}

\begin{theorem}[Local unstable manifold,
Hadamard-Perron~\cite{hadamard:1901a,perron:1928a}]
\label{thm:loc-unst-mf-thm}
Assuming~(H1--H2) in Definition~\ref{def:loc-coord-lambda}
there is a constant $\rho=\rho(\lambda)\in(0,\frac{\rho_0}{2})$ such that
the following is true. A neighborhood of $0$ in the {\bf local unstable manifold}
\begin{equation}\label{eq:loc-unstab-mf}
     W^u(0,B_{\rho_0})
     :=\Bigl\{\eta(0)\,\big|\,\text{
     $\eta:(-\infty,0]\to B_{\rho_0}$,
     (\ref{eq:f}),
     $\lim_{t\to-\infty} \eta(t)=0$}\Bigr\}
\end{equation}
is a graph over the radius $R=\frac{\rho}{2}$ ball $B^-_R\subset E^-$ and
this graph is tangent to $E^-$ at $0$. More precisely, there is a $C^r$ map
\begin{equation}\label{eq:graph-F}
     \Ff^\infty
     =\left(id,F^\infty\right)
     :B^-_R\to E^-\oplus E^+
     ,\quad
     F^\infty(0)=0
     ,\quad
     dF^\infty(0)=0,
\end{equation}
whose image is a neighborhood of $0$ in $W^u(0,B_{\rho_0})$. Moreover, it holds that
$
     \Norm{\phi_t\eta_0}\le \rho e^{t\lambda}
$
for every $t\le 0$ and uniformly in $\eta_0\in\Ff^\infty(B^-_R)$;
see Figure~\ref{fig:fig-graph-map-F}.\footnote{
  {\bf Uniform exponential decay:}
  The theorem shows that all backward trajectories which remain forever in
  backward time in a certain neighborhood of the fixed point $0$ not only
  converge to $0$, but they do so exponentially -- even uniformly at the same rate of decay.
  }
\end{theorem}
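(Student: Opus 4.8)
The plan is to run the Lyapunov--Perron contraction scheme in backward time, parametrizing the local unstable manifold by its tangent space $E^-$. Fix the decay rate $\lambda\in(0,d)$ of~(H1) together with a constant $\mu\in(\lambda,d)$, and, for a radius $\rho\in(0,\tfrac{\rho_0}{2})$ to be shrunk below, work in the Banach space $\Bb_\lambda$ of continuous curves $\eta\colon(-\infty,0]\to X=T_xM$ carrying the weighted norm $\norm{\eta}_\lambda:=\sup_{t\le 0}e^{-t\lambda}\norm{\eta(t)}$. The closed ball $\Bb_\lambda^\rho:=\{\,\eta\in\Bb_\lambda:\norm{\eta}_\lambda\le\rho\,\}$ then consists of curves with $\norm{\eta(t)}\le\rho e^{t\lambda}$, hence lying in $B_{\rho_0}$ and decaying to $0$ as $t\to-\infty$. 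For each $a\in E^-$ with $\norm{a}\le R:=\tfrac{\rho}{2}$ I would define the operator
\begin{equation*}
  (\Psi_a\eta)(t)
  :=e^{-tA^-}a
   +\int_0^t e^{-(t-\sigma)A^-}\pi_- h(\eta(\sigma))\,d\sigma
   +\int_{-\infty}^t e^{-(t-\sigma)A^+}\pi_+ h(\eta(\sigma))\,d\sigma ,
\end{equation*}
which is the backward-time counterpart, obtained by the same variation-of-constants argument, of the representation formula of Proposition~\ref{prop:representation-formula}, with the roles of $\pi_-$ and $\pi_+$ interchanged: a bounded curve decaying to $0$ solves~(\ref{eq:f}) in backward time if and only if it is a fixed point of $\Psi_a$ with $a=\pi_-\eta(0)$, the point being that the growth of $e^{-tA^+}$ as $t\to-\infty$ forces the $\pi_+$-component of any such solution to equal the convergent integral from $-\infty$, while the $\pi_-$-component is free.

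First I would verify that $\Psi_a$ preserves $\Bb_\lambda^\rho$ and contracts it. Using the exponential estimates of Proposition~\ref{prop:exp-est} (which, since $\lambda<\mu$, give $\norm{e^{-sA^-}}\le e^{s\mu}$ for $s\le0$ and $\norm{e^{-sA^+}}\le e^{-s\mu}$ for $s\ge0$) together with the bound $\norm{h(\eta(\sigma))}\le\kappa(\rho)\norm{\eta(\sigma)}\le\kappa(\rho)\rho\,e^{\sigma\lambda}$ from Lemma~\ref{le:f}, both integrals are dominated by a constant multiple of $\kappa(\rho)\rho\,e^{t\lambda}$ because $\lambda<\mu$; hence $\norm{\Psi_a\eta}_\lambda\le\norm{a}+C\kappa(\rho)\rho$, which is $\le\rho$ once $\rho$ is small enough that $C\kappa(\rho)\le\tfrac12$. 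Replacing $h(\eta(\sigma))$ by $h(\eta(\sigma))-h(\eta'(\sigma))$ and using the Lipschitz estimate likewise yields $\norm{\Psi_a\eta-\Psi_a\eta'}_\lambda\le\tfrac12\norm{\eta-\eta'}_\lambda$. The Banach fixed point theorem then produces a unique $\eta_a\in\Bb_\lambda^\rho$; evaluating $\eta_a=\Psi_a\eta_a$ at $t=0$ gives $\pi_-\eta_a(0)=a$, so
\begin{equation*}
  F^\infty(a):=\pi_+\eta_a(0)=\int_{-\infty}^0 e^{\sigma A^+}\pi_+ h(\eta_a(\sigma))\,d\sigma
\end{equation*}
defines the graph map $\Ff^\infty=(id,F^\infty)\colon B^-_R\to E^-\oplus E^+$ of~(\ref{eq:graph-F}). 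Since $h(0)=0$, the constant curve $\eta\equiv0$ is the fixed point for $a=0$, so $F^\infty(0)=0$. The uniform decay $\Norm{\phi_t\eta_0}\le\rho e^{t\lambda}$ for $t\le0$ and $\eta_0=\Ff^\infty(a)$ is immediate, since the backward trajectory through $\eta_0$ is precisely $t\mapsto\eta_a(t)$. Finally, to see that $\Ff^\infty(B^-_R)$ is a neighbourhood of $0$ in the local unstable manifold~(\ref{eq:loc-unstab-mf}), one argues conversely: a backward solution in~(\ref{eq:loc-unstab-mf}) that starts sufficiently close to $0$ satisfies, by a short Gr\"onwall-type bootstrap, $\norm{\eta(t)}\le\rho e^{t\lambda}$, hence lies in $\Bb_\lambda^\rho$ and therefore coincides with $\eta_a$ for $a=\pi_-\eta(0)$.

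The genuinely delicate step, and the one I expect to be the main obstacle, is the $C^r$ regularity of $a\mapsto\eta_a$, hence of $F^\infty$: one cannot simply appeal to smooth dependence of fixed points on parameters, because on the weighted sup-norm space $\Bb_\lambda$ the substitution operator $\eta\mapsto h\circ\eta$ is not differentiable in the naive sense --- the classical loss-of-derivatives phenomenon. The remedy I would use is the fiber contraction theorem: formally differentiating $\eta_a=\Psi_a(\eta_a)$ in $a$ up to order $r$ produces, for the would-be derivatives $\partial_a^j\eta_a$, a cascade of fixed-point equations for contractions on suitably re-weighted spaces, whose inhomogeneities are built from $dh,\dots,d^r h$ evaluated along $\eta_a$ and are continuous by $h\in C^r$ together with the Lipschitz estimate for $dh$ in Lemma~\ref{le:f}. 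Applying the fiber contraction theorem $r$ times upgrades continuity of $a\mapsto\eta_a$ to $C^r$ and identifies its derivatives with these expressions; in particular, differentiating the fixed point identity at $a=0$ and using $dh(0)=0$ shows that $v\mapsto(\partial_a\eta_a\,v)|_{a=0}=e^{-\,\cdot\,A^-}v$ is $E^-$-valued, whence $dF^\infty(0)=0$ and the graph is tangent to $E^-$ at $0$. Everything outside this smoothness step is the routine contraction estimate.
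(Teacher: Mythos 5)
Your construction coincides with the paper's own sketch: you build the same weighted Banach space $Z^-$ of exponentially decaying curves on $(-\infty,0]$, the same Lyapunov--Perron operator $\Phi_{z_-}$ (yours is literally~(\ref{eq:Phi}) with the sign of the middle integral unwound and $e^{-(t-\sigma)A}\pi_+$ rewritten as $e^{-(t-\sigma)A^+}\pi_+$), and the same graph map $F^\infty(z_-)=\pi_+\eta_{z_-}(0)$. The only real point of departure is the smoothness step, and there you raise a worry that is in fact misplaced in this particular setting. The ``classical loss-of-derivatives'' obstruction for Nemytskii operators on weighted spaces arises when the weight \emph{penalises} nonlinearity -- e.g.\ for center manifolds, where higher-order remainders decay \emph{slower} than the weight. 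Here the situation is the opposite: on $Z^-$ one has $\norm{v(t)}\le\norm{v}^-_{\exp}\,e^{t\lambda}$ with $t\le 0$, so a $j$-th order Taylor remainder of $h$ at $\eta(t)$ is pointwise $O(e^{j\lambda t})$, and after dividing by the weight $e^{-t\lambda}$ it carries the extra factor $e^{(j-1)\lambda t}\le 1$. Combined with the uniform bounds on $dh,\dots,d^rh$ over the fixed ball $B_\rho$ from Lemma~\ref{le:f}, this shows the substitution operator $\eta\mapsto h\circ\eta$ is genuinely $C^r$ on $Z^-$, and the uniform contraction principle alone (which is exactly what the paper invokes, via~\cite[\S 2.2, \S 3.6]{chow:1982a} and in Step~3 of its backward $\lambda$-Lemma proof) already delivers the $C^r$ dependence of $z_-\mapsto\eta_{z_-}$. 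Your fiber-contraction detour is not wrong -- it would also prove the claim -- but the problem it is designed to circumvent does not occur for (un)stable manifolds of a hyperbolic rest point, only for center-type manifolds. Everything else (the mixed-Cauchy reformulation, $F^\infty(0)=0$ from $h(0)=0$, $dF^\infty(0)=0$ from $dh(0)=0$, the backward-Gr\"onwall argument identifying small backward solutions with fixed points) matches the paper's intent.
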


We sketch the proof of the theorem by the {\bf contraction method}.
Pick an element $z_-\in E^-$ near the origin. Our object of interest is a backward flow line
$\eta:(-\infty,0]\to B_{\rho_0}$ whose value at time zero
projects to $z_-$ under $\pi_-$ and which emanates from the origin asymptotically at time
$t=-\infty$. For any sufficiently small constant $\rho\in(0,\frac{\rho_0}{2})$ the complete metric space
\begin{equation*}
\begin{split}
     Z^-=Z^-_{\lambda,\rho}
     :=\Bigl\{\eta\in  
     C^0((-\infty,0],T_xM)\,\,\Big|\,\,
     \Norm{\eta}_{\exp}^-
     :=\sup_{t\le 0} e^{-t\lambda}
     \Norm{\eta(t)}
     \le\rho\Bigr\}
\end{split}
\end{equation*}
carries the contraction given by
\begin{equation}\label{eq:Phi}
\begin{split}
     \left(\Phi_{z_-}\eta\right)(s)
     :=e^{-sA^-} z_-
   &-\int_s^0 e^{-(s-\sigma)A^-}\pi_-
     h(\eta(\sigma)) d\sigma
   \\
   &+\int_{-\infty}^s e^{-(s-\sigma)A}\pi_+
     h(\eta(\sigma)) d\sigma.
\end{split}
\end{equation}
Its (unique) fixed point $\eta_{z_-}$ is the desired flow line. Define the graph map by
$F^\infty(z_-):=\pi_+(\eta_{z_-}(0))$ as illustrated by Figure~\ref{fig:fig-graph-map-F} and denote by
$B^-_R\subset E^-$ the ball of radius $R=\frac{\rho}{2}<\frac{\rho_0}{4}$ about $0$.

\subsection*{Stable manifold theorem}
Whereas the stable manifold is easier to define -- given only a forward
flow -- than the unstable manifold, the step from local to global
is not obvious any more, given Convention~\ref{con:back-flow}. 
We shall use this oportunity to promote
Henry's~\cite{henry:1981a}, widely unkown as it seems,
argument to pull \emph{back} the local coordinate charts near $x$
in the backward time direction utilizing only the \emph{forward} flow.

\begin{theorem}[Stable manifold theorem]\label{thm:st-mf-thm}
Non-degeneracy of $x$ together with $\varphi$ being a gradient flow
implies that the stable manifold $W^s(x)$ defined by~(\ref {eq:st-mf})
is an embedded submanifold of $M^n$ of class $C^r$
tangent at $x$ to the vector subspace $E^+\subset T_xM$.
Thus $\dim W^s(x)$ is equal to the Morse co-index $n-k$ of $x$.
\end{theorem}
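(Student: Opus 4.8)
The plan is to deduce the global stable manifold Theorem~\ref{thm:st-mf-thm} from the local stable manifold structure (the $T\to\infty$ limit in Theorem~\ref{thm:backward-lambda-Lemma}, whose central leaf $\Gg^\infty=(0,id):D^+\to D^-\times D^+$ furnishes a $C^r$ local stable manifold graph) together with flow invariance, in a manner dual to the unstable case but respecting Convention~\ref{con:back-flow}. First I would record that $W^s(x)$ is forward flow invariant: if $\varphi_t q\to x$ then $\varphi_t(\varphi_s q)\to x$ for every $s\ge 0$, so $\varphi_s\bigl(W^s(x)\bigr)\subset W^s(x)$; conversely, since each $\varphi_s$ is a $C^r$ diffeomorphism onto its image, the relation $q = \varphi_s(\varphi_{-s}q)$ makes sense only \emph{after} we know the relevant points stay in a region where a backward flow is available — which is exactly the subtlety Henry's argument addresses and which I treat below.

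Next I would extract the local statement. Working in the exponential coordinates of~(H1)--(H4), the local stable manifold
\[
     W^s(0,B_{\rho_0})
     :=\Bigl\{\eta(0)\,\big|\,\text{$\eta:[0,\infty)\to B_{\rho_0}$, (\ref{eq:f}), $\lim_{t\to\infty}\eta(t)=0$}\Bigr\}
\]
is, near the origin, the graph of a $C^r$ map $G^\infty:B^+_R\to E^-$ with $G^\infty(0)=0$, $dG^\infty(0)=0$: this is obtained by the very contraction method sketched for Theorem~\ref{thm:loc-unst-mf-thm}, now using the representation formula~(\ref{eq:representation-formula}) with $T=\infty$ (term three drops out, leaving the genuinely forward integral equation), on the exponentially weighted space $Z^+_{\lambda,\rho}:=\{\eta\in C^0([0,\infty),T_xM)\mid \sup_{t\ge 0}e^{t\lambda}\Norm{\eta(t)}\le\rho\}$; the fixed point map is $\eta\mapsto e^{-\cdot A}\pi_+\eta(0)+\int_0^\cdot e^{-(\cdot-\sigma)A}\pi_+h(\eta(\sigma))\,d\sigma-\int_\cdot^\infty e^{-(\cdot-\sigma)A^-}\pi_-h(\eta(\sigma))\,d\sigma$, a contraction for $\rho$ small by the Lipschitz Lemma~\ref{le:f} and the exponential estimates of Proposition~\ref{prop:exp-est}. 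In the local model this is precisely $\Gg^\infty$ of~(H3). So $W^s(x)$ is, in a neighborhood of $x$, a $C^r$ embedded disk tangent to $E^+$, of dimension $n-k$; in particular Corollary~\ref{cor:desc-disk} has its stable analogue, and there is a small descending-time neighborhood $U$ of $x$ in which $W^s(x)\cap U$ is this graph.

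Then I would globalize. Given $q\in W^s(x)$, by definition $\varphi_T q\in U$ for all $T\ge T_q$ large, and $\varphi_{T_q}q$ lies on the local stable disk $W^s(x)\cap U$. I want to transport the local chart backward along the trajectory. Here is where the absence of a backward Cauchy problem bites: a priori $\varphi_{T}$ need not be invertible near $\varphi_{T_q}q$. The fix, following Henry, is that on the local stable manifold one can run the flow backward \emph{within} $W^s(x)\cap U$ as long as one stays in $U$, because $W^s(x)\cap U$ is itself a $C^r$ submanifold on which $-\nabla f$ restricts to a $C^{r-1}$ vector field generating a local flow; extend this a little and use that $\varphi$ maps a neighborhood of the trajectory segment $\varphi_{[T_q,T]}(q)$ diffeomorphically — this is automatic for a $C^r$ flow along any compact trajectory segment, by the ODE smooth-dependence theorem, with no backward Cauchy problem globally. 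Thus $\varphi_{T_q}$ restricts to a $C^r$ diffeomorphism from a neighborhood of $q$ in $M$ onto a neighborhood of $\varphi_{T_q}q$, and pulling back the local stable graph chart through $\varphi_{T_q}^{-1}$ gives a $C^r$ chart for $W^s(x)$ near $q$; flow invariance shows the transition maps between overlapping such charts are $C^r$, so $W^s(x)$ is an injectively immersed $C^r$ submanifold tangent to $E^+$ at $x$. Finally, since $\varphi$ is a gradient flow, $f$ is strictly decreasing along non-constant trajectories, so $W^s(x)$ carries no recurrence and the immersion is in fact an embedding (a trajectory cannot return near $q$ with $f$-value $\ge f(q)$), which upgrades the immersion to an embedded submanifold and completes the proof.

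The main obstacle, as flagged, is the backward-flow step: making precise that one may invert $\varphi_{T_q}$ near $q$ without invoking a backward Cauchy problem on $M$. I expect to handle this exactly as in Henry~\cite{henry:1981a} — only the restriction of the flow to the \emph{already constructed} local stable manifold, together with $C^r$-dependence of solutions of the forward Cauchy problem~(\ref{eq:f}) on initial data over the compact time interval $[0,T_q]$, is needed; everything else (transversality, the tangent-space computation $T_xW^s(x)=E^+$, the embedded-vs-immersed distinction) is standard and parallels the unstable manifold Theorem~\ref{thm:unst-mf-thm} with $f$ replaced by its restriction along stable directions.
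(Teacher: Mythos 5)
Your overall architecture (local stable manifold by the contraction method, then globalization along trajectories using only the forward flow) matches the paper's, and your local step is exactly the paper's proof of Theorem~\ref{thm:loc-st-mf-thm}. But your globalization takes a different route from the paper's, and it has a gap at precisely the point the paper's argument is designed to handle. You propose to show that $\varphi_{T_q}$ restricts to a $C^r$ diffeomorphism from a neighborhood of $q$ in $M$ onto a neighborhood of $\varphi_{T_q}q$, and then pull back the submanifold chart through $\varphi_{T_q}^{-1}$. The assertion that this local invertibility is ``automatic \dots by the ODE smooth-dependence theorem'' is not a proof: smooth dependence gives that $\varphi_{T_q}$ is of class $C^r$, but says nothing about its local injectivity or the invertibility of $d(\varphi_{T_q})_q$. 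The two standard ways to get local invertibility are (i) the local backward Cauchy problem, which Convention~\ref{con:back-flow} forbids, or (ii) invertibility of $d(\varphi_{T_q})_q$ plus the inverse function theorem --- and establishing (ii) without backward flows is exactly the nontrivial content. Your detour through the restricted backward flow on $W^s(x)\cap U$ does not supply it either: inverting $\varphi_{T_q}$ on an ambient neighborhood of $q$ requires control transverse to the stable manifold, not just along it.

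The paper (following Henry) sidesteps chart pullback entirely. It writes $\Sigma=U\cap W^s(x)$ as the regular zero set of a map $h:U\to\R^k$, observes that $U_T\cap W^s(x)=(h\circ\varphi_T)^{-1}(0)$, and applies the regular value theorem to $h\circ\varphi_T$; the only thing to check is surjectivity of $d(\varphi_T)_q$, which is obtained from injectivity of the adjoint $(d\varphi_T|_q)^*$, i.e.\ backward \emph{uniqueness} for the linearized (self-adjoint) equation --- a property of the forward Cauchy problem that holds for Lipschitz vector fields. This buys two things your route does not: it never inverts $\varphi_T$, and it survives in infinite dimensions where $d\varphi_T$ has only dense range. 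To repair your argument in finite dimensions you would need to insert the same linear-algebraic input (e.g.\ invertibility of the fundamental solution of the variational equation along $\varphi_{[0,T_q]}q$, say via Liouville's formula), at which point you have essentially reconstructed the paper's key step; you would also need to verify, as the paper does, that $U\cap\Sigma=U\cap W^s(x)$ after shrinking $U$, using that a gradient trajectory cannot return to itself asymptotically.
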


\begin{remark} \label{rem:asc-disk}
The {\bf ascending disk} $W^s_\eps(x)$ and the
{\bf ascending sphere} $S^s_\eps(x)$ are defined as
in Corollary~\ref{cor:desc-disk}, just replace the superlevel set
$\{f\ge f(x)-\eps\}$ by the sublevel set $\{f\le f(x)+\eps\}$.
They also have analogous properties, in the assertions
just replace $k$ by $n-k$ where $n=\dim M$.
\end{remark}


\begin{theorem}[Local stable manifold, Hadamard-Perron~\cite{hadamard:1901a,perron:1928a}]
\label{thm:loc-st-mf-thm}
Assuming~(H1--H2) in Definition~\ref{def:loc-coord-lambda}
there is a constant $\rho=\rho(\lambda)\in(0,\frac{\rho_0}{2})$ such that
a neighborhood of $0$ in the {\bf local stable manifold}
\begin{equation}\label{eq:loc-stab-mf}
     W^s(0,B_{\rho_0})
     :=\Bigl\{ z\in B_{\rho_0}\,\big|\,
     \text{$\phi_t z\in B_{\rho_0}$ $\forall t>0$ and
     $\lim_{t\to\infty}\phi_t z=0$}
     \Bigr\}
\end{equation}
is a graph over the radius $R=\frac{\rho}{2}$ ball $B^+_R\subset E^+$ and this graph is
tangent to $E^+$ at $0$. More precisely, there is a $C^r$ map
\begin{equation}\label{eq:graph-G}
     \Gg^\infty
     =\left(G^\infty,id \right)
     :B^+_R\to E^-\oplus E^+
     ,\quad
     G^\infty(0)=0
     ,\quad
     dG^\infty(0)=0,
\end{equation}
whose image is a neighborhood of $0$ in $W^s(0,B_{\rho_0})$. Moreover, it holds that
$
     \Norm{\phi_t\xi_0}\le \rho e^{-t\lambda}
$
for every forward time $t\ge 0$ and uniformly in $\xi_0\in\Gg^\infty(B^+_R)$.
\end{theorem}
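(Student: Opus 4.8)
The plan is to prove the local stable manifold Theorem~\ref{thm:loc-st-mf-thm} by the contraction method, running the same machinery that was sketched for the local \emph{unstable} manifold, but now using only the \emph{forward} semi-flow --- consistent with Convention~\ref{con:back-flow}. First I would fix $\rho\in(0,\frac{\rho_0}{2})$ small, to be constrained later, and introduce the complete metric space of exponentially decaying forward trajectories
\begin{equation*}
     Z^+=Z^+_{\lambda,\rho}
     :=\Bigl\{\xi\in C^0([0,\infty),T_xM)\,\,\Big|\,\,
     \Norm{\xi}_{\exp}^+:=\sup_{t\ge 0}e^{t\lambda}\Norm{\xi(t)}\le\rho\Bigr\},
\end{equation*}
which is closed in the space of bounded continuous curves and hence complete. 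For a prescribed $z_+\in E^+$ near the origin, the candidate map is the ``$T\to\infty$'' version of the representation formula~(\ref{eq:representation-formula}), in which term three has disappeared:
\begin{equation}\label{eq:Psi-stable}
\begin{split}
     \left(\Psi_{z_+}\xi\right)(t)
     :=e^{-tA^+}z_+
   &+\int_0^t e^{-(t-\sigma)A^+}\pi_+ h(\xi(\sigma))\,d\sigma
   \\
   &-\int_t^\infty e^{-(t-\sigma)A^-}\pi_- h(\xi(\sigma))\,d\sigma.
\end{split}
\end{equation}
By Proposition~\ref{prop:representation-formula} a fixed point of $\Psi_{z_+}$ lying in $B_{\rho_0}$ for all forward times is precisely a solution of~(\ref{eq:f}) which stays in $B_{\rho_0}$ and converges to $0$, i.e. an element of $W^s(0,B_{\rho_0})$ with $\pi_+\xi(0)=z_+$.

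The next step is the analytic heart: showing $\Psi_{z_+}$ maps $Z^+$ into itself and is a contraction, for $\rho$ small enough. Here I would use the exponential estimates of Proposition~\ref{prop:exp-est} --- specifically $\Norm{e^{-tA^+}}\le e^{-t\mu}$ for $t\ge0$ and $\Norm{e^{-tA^-}}\le e^{t\mu}$ for $t\le0$, where $\delta<\mu<d$ is the constant fixed in (H1) --- together with the Lipschitz estimate $\Norm{h(\xi)-h(\eta)}\le\kappa(\rho)\Norm{\xi-\eta}$ and $h(0)=0$ from Lemma~\ref{le:f}. The three terms of~(\ref{eq:Psi-stable}) are estimated against the weight $e^{t\lambda}$: the linear term contributes $\Norm{e^{-tA^+}z_+}e^{t\lambda}\le e^{-t(\mu-\lambda)}\Norm{z_+}\le\Norm{z_+}$ since $\lambda<d$ can be taken $<\mu$ (shrinking $\lambda$ if needed, or absorbing the discrepancy --- the paper's convention is $\lambda\in(0,d)$ and the decay exponent in the unstable case is exactly $\lambda$, so one argues with $\mu$ and the gap $\mu-\lambda$); the two integral terms each produce a factor $\kappa(\rho)$ times a convergent exponential integral of the form $\int_0^t e^{-(t-\sigma)\mu}e^{-\sigma\lambda}\,d\sigma$ and $\int_t^\infty e^{(t-\sigma)\mu}e^{-\sigma\lambda}\,d\sigma$, each bounded by $\frac{1}{\mu-\lambda}e^{-t\lambda}$. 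Choosing $\rho$ so that $\kappa(\rho)\cdot\frac{2}{\mu-\lambda}\le\frac12$ and $R=\frac\rho2$ gives the self-map property, and the same computation with $h(\xi)-h(\eta)$ in place of $h(\xi)$ gives $\Norm{\Psi_{z_+}\xi-\Psi_{z_+}\eta}_{\exp}^+\le\frac12\Norm{\xi-\eta}_{\exp}^+$. The Banach fixed point theorem then yields a unique $\xi_{z_+}\in Z^+$, and one defines the graph map $G^\infty(z_+):=\pi_-(\xi_{z_+}(0))$.

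It remains to verify the three claimed properties of $\Gg^\infty=(G^\infty,id)$. Smoothness $G^\infty\in C^r$ and the derivative relations follow from the standard parametrized fixed-point argument: $\Psi$ depends $C^r$ on the parameter $z_+$ because $h\in C^r$ (Lemma~\ref{le:f}), so the fixed point does too, with derivatives obtained by differentiating the fixed-point equation and solving the resulting linear fixed-point problem --- again a contraction with the same constant. The normalizations $G^\infty(0)=0$ and $dG^\infty(0)=0$ come from $h(0)=0$, $dh(0)=0$, which force $\xi_0\equiv0$ for $z_+=0$ and, upon linearizing, identify $d\xi_{z_+}(0)$ at $z_+=0$ with the purely $E^+$-valued solution $t\mapsto e^{-tA^+}v$, whence $\pi_-$ of its value at $0$ vanishes. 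Tangency to $E^+$ at $0$ is the same statement. Finally, that the image of $\Gg^\infty$ is a \emph{neighborhood} of $0$ in $W^s(0,B_{\rho_0})$: any $z\in W^s(0,B_{\rho_0})$ close enough to $0$ gives, by the uniform decay bound one extracts \emph{a posteriori} from~(\ref{eq:Psi-stable}) (the fixed point satisfies $\Norm{\xi(t)}\le\rho e^{-t\lambda}$, which is exactly the asserted estimate $\Norm{\phi_t\xi_0}\le\rho e^{-t\lambda}$), a trajectory lying in $Z^+$, hence it is the fixed point for $z_+=\pi_+ z$, hence $z=\xi_{z_+}(0)=\Gg^\infty(\pi_+z)$. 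I expect the main obstacle to be purely bookkeeping rather than conceptual: arranging the rate constants ($\lambda$ versus $\mu$ versus $\delta$, and the spectral gap $d$) so that all three exponential integrals converge with a uniform decay factor $e^{-t\lambda}$ and so that the smallness condition on $\rho$ (via $\kappa(\rho)$) can simultaneously be met --- essentially the content of Figure~\ref{fig:fig-spectral-gap} --- and then tracking regularity carefully through the parametrized fixed-point scheme to land exactly at class $C^r$. Since this is the exact mirror of the unstable case already outlined after Theorem~\ref{thm:loc-unst-mf-thm}, the cleanest write-up is to state the contraction~(\ref{eq:Psi-stable}), assert the estimates citing Propositions~\ref{prop:exp-est} and~\ref{prop:representation-formula} and Lemma~\ref{le:f}, and invoke the standard parametrized Banach fixed point theorem for the regularity and derivative statements.
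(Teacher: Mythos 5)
Your proposal is the same contraction-method argument the paper sketches after Theorem~\ref{thm:loc-st-mf-thm}: the fixed-point operator $\Psi_{z_+}$ on $Z=Z_{\lambda,\rho}$ (your $Z^+$) obtained by setting $T=\infty$ in the representation formula, the Lipschitz Lemma~\ref{le:f} and the exponential estimates of Proposition~\ref{prop:exp-est} to get a self-map and a contraction, and the parametrized Banach fixed point theorem for $C^r$ regularity, with $h(0)=0$ and $dh(0)=0$ giving the normalizations $G^\infty(0)=0$, $dG^\infty(0)=0$; the spectral-gap bookkeeping via $\delta=\mu-\lambda$ is precisely what Figure~\ref{fig:fig-spectral-gap} encodes. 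Writing $e^{-tA^+}z_+$ instead of $e^{-tA}z_+$ is harmless since $z_+\in E^+$ and $e^{-sA}$ respects the splitting.

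The one genuine gap is in the ``image is a neighborhood'' step, which as written is circular. The bound $\Norm{\xi(t)}\le\rho e^{-t\lambda}$ is obtained a posteriori \emph{for the fixed point}, i.e.\ once one already knows it lies in $Z^+$. It does not tell you that an arbitrary $z\in W^s(0,B_{\rho_0})$ close to $0$ produces a trajectory $\phi_\cdot z\in Z^+$: membership in the local stable set only requires $\phi_t z\to 0$ with no prescribed rate, and fixed-point uniqueness holds only \emph{inside} $Z^+$, so you cannot identify $\phi_\cdot z$ with $\xi_{\pi_+ z}$ before establishing $\phi_\cdot z\in Z^+$. What is needed is an a priori decay estimate: from the representation formula~(\ref{eq:representation-formula}), let $T\to\infty$ (where $\pi_-\phi_T z\to 0$ removes the middle term) to see that $\phi_\cdot z$ satisfies the fixed-point equation, and then run a Gronwall-type bootstrap to show that any forward trajectory staying in a small enough ball and converging to $0$ must in fact decay at exponential rate $\lambda$. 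The paper's sketch also suppresses this and defers to~\cite[Sec.~3.6]{chow:1982a}; but for a self-contained write-up the bootstrap lemma must be stated and proved.
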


The proof of Theorem~\ref{thm:loc-st-mf-thm} is
by the {\bf contraction method}. In fact, our proof of the Backward $\lambda$-Lemma,
Theorem~\ref{thm:backward-lambda-Lemma}, presented below
generalizes the contraction method proof from invariant manifolds,
which is well known, to invariant foliations.
The details missing in the following sketch of
proof can be easily recovered by formally setting $T=\infty$
in the proof of Theorem~\ref{thm:backward-lambda-Lemma}.
\\
Pick $z_+\in E^+$ near $0$. Our object of interest
is a flow line $\xi:[0,\infty)\to B_{\rho_0}$
whose initial value projects to $z_+$ under $\pi_+$
and which converges to $0$, as $t\to\infty$.
For any sufficiently small constant
$\rho\in(0,\frac{\rho_0}{2})$ the complete metric space
\begin{equation}\label{eq:exp norm}
\begin{split}
     Z=Z_{\lambda,\rho}
     :=\Bigl\{\xi\in  
     C^0([0,\infty),T_xM)\,\,\big|\,\,
     \Norm{\xi}_{\exp}
     :=\sup_{t\ge 0} e^{t\lambda}
     \Norm{\xi(t)}
     \le\rho
     \Bigr\},
\end{split}
\end{equation}
carries a contraction defined by
\begin{equation*}
     (\Psi_{z_+}\xi)(t)
     =e^{-tA}z_+
     +\int_0^t e^{-(t-\sigma)A}\pi_+
     h(\xi(\sigma)) d\sigma
     -\int_t^\infty e^{-(t-\sigma)A^-}
     \pi_-h(\xi(\sigma)) d\sigma.
\end{equation*}
Moreover, by the representation formula~(\ref{eq:representation-formula})
for $T=\infty$ the (unique) fixed point $\xi_{z_+}$ is our object of
interest. For $R=\frac{\rho}{2}<\frac{\rho_0}{4}$ the map
\begin{equation*}
\begin{split}
     G^\infty: B^+_R
     &\to E^-,
      \qquad\qquad\quad
      B^+_R
      :=\left\{z_+\in E^+:
      \Norm{z_+}\le\rho/2
      \right\},
   \\
     z_+
     &\mapsto \pi_-\left( \xi_{z_+}(0)\right)
\end{split}
\end{equation*}
has the properties asserted by Theorem~\ref{thm:loc-st-mf-thm};
see also~\cite[Sec.~3.6]{chow:1982a}.

\begin{remark}
Further methods to prove the \emph{local} (un)stable manifold theorems:
\begin{itemize}
\item
  Graph transform method: A geometrically appealing
  method, nicely sketched in~\cite[p.80]{palis:1982a};
  for details see~\cite{shub:1987a}.
\item
  Irwin's space of sequences:
  See former two references. Another excellent
  reference for those who care about details
  is Zehnder's recent book~\cite{zehnder:2010a}.
\end{itemize}
\end{remark}

\begin{proof}[Proof of the stable manifold Theorem~\ref{thm:st-mf-thm}
(Henry~{\cite[Thm.~6.1.9]{henry:1981a}})]
\mbox{} \\ By Theorem~\ref{thm:loc-st-mf-thm} the stable manifold
is locally near $x$ a $C^r$ submanifold of $M$ of dimension $n-k$ where
$k$ is the Morse index of $x$ and $n=\dim M$.
Thus there is a neighborhood $\Sigma$ of $x$ in $W^s(x)$ which is
represented as a zero set $\{y\in U\mid h(y)=0\}$ where $U$ is an open set in $M$
and $h:U\to\R^k$ is a $C^r$ map such that $dh_y:T_yM\to\R^k$
is surjective at each point
$$
     y\in h^{-1}(0)=U\cap\Sigma=U\cap W^s(x).
$$
Note that here only the first identity is part of the submanifold
property of $\Sigma$. To obtain the second identity choose $U$
smaller, if necessary, and use the fact that a flow line of a
\emph{gradient} flow cannot come back to itself asymptotically.

Nearby arbitrary elements $q_0$ of $W^s(x)$ one obtains local submanifold charts as follows.
Pick $T\ge0$ such that $\varphi_Tq_0\in\Sigma$ and consider the $C^r$ map
$$
     h\circ\varphi_T:U_T\stackrel{\varphi_T}{\longrightarrow}
     U\stackrel{h}{\longrightarrow}\R^k,\qquad
     U_T:={\varphi_T}^{-1}(U),
$$
whose zero set is $U_T\cap W^s(x)$; see Figure~\ref{fig:fig-Henry-proof}.
The pre-image $U_T$ is an \emph{open} neighborhood of $q_0$ in $M$
since $U$ is open and $\varphi_T$ is continuous.
By the regular value theorem it remains to show that the map
$$
     d(h\circ\varphi_T)_q:T_qM\stackrel{d(\varphi_T)_q}{\longrightarrow}
     T_{y=\varphi_Tq}M\stackrel{dh_y}{\longrightarrow}\R^k
$$
is surjective whenever $q\in(h\circ\varphi_T)^{-1}(0)=U_T\cap W^s(x)$.
Since $dh_{\varphi_Tq}$ is surjective it suffices to show that
$d(\varphi_T)_q$ is surjective.\footnote{
  In infinite dimensions~\cite{henry:1981a} the operator $d\varphi_T|q$ is not
  surjective in general, but still admits dense image. This
  suffices to show surjectivity of the composition since
  $k$ is finite.
  }
The following argument avoids backward flows.
In finite dimensions surjectivity of $d(\varphi_T)_q$
is equivalent to dense range which itself is equivalent,
even in the general Banach space case,
to injectivity of the adjoint (or transposed) operator $(d\varphi_T|_q)^*$.
But the latter is equivalent to \emph{backward uniqueness}
of the (\emph{forward} Cauchy problem associated to the) \emph{adjoint equation}.
In our case $A=A^*$ so the adjoint equation is just the linearized
equation itself. But an ODE associated to a Lipschitz continuous
vector field exhibits forward and backward uniqueness; see e.g.~\cite{Agarwal:1993a}.
\end{proof}
\begin{figure}
  \centering
  \includegraphics
                             {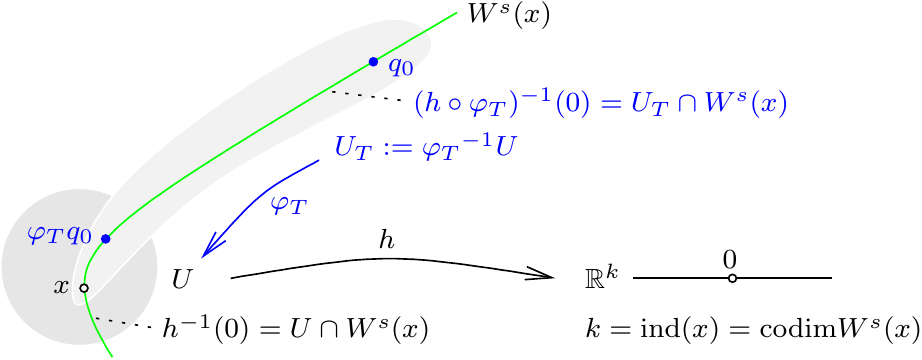}
  \caption{Henry's proof: pull \emph{back} coordinate charts by
    \emph{forward} time-$T$-map}
  \label{fig:fig-Henry-proof}
\end{figure}

\subsection*{Convenient coordinates}
Suppose the local setup of~(H1--H2) in Definition~\ref{def:loc-coord-lambda}.
Namely fix a constant $\lambda$ in the spectral gap $(0,d)$
and recall that the local flow $\phi_t$ acting on the ball $B_{\rho_0}$
in $T_xM=X=E^-\oplus E^+$ is generated by the ODE
$
     \dot\zeta+A\zeta=h(\zeta)
$
where the non-linearity $h$ is given by~(\ref{eq:f}). 
Consider the balls $B^-_R\subset E^-$ and $B^+_R\subset E^+$
of radius $R(x,\rho_0,\lambda)=\frac{\rho}{2}<\frac{\rho_0}{4}$
with $B^-_R\times B^+_R\subset B_{\rho_0/2}$
and the $C^r$ graph maps $F^\infty:B^-_R\to E^+$ and
$G^\infty:B^+_R\to E^-$ provided by the local (un)stable manifold
Theorems~\ref{thm:loc-unst-mf-thm} and~\ref{thm:loc-st-mf-thm}.
Use the fact that the graph $\Ff^\infty(B^-_R)$ is tangent to $E^-$ at $0$,
similarly for $\Gg^\infty(B^+_R)$, to see that choosing
the radius $R>0$ smaller, if necessary, one can arrange
that both graphs simultaneously satisfy inclusions
$$
     \Ff^\infty(B^-_R)\subset\left(B^-_R\times B^+_R\right),\qquad
     \Gg^\infty(B^+_R)\subset\left(B^-_R\times B^+_R\right). 
$$
On the other hand, by Corollary~\ref{cor:desc-disk} and Remark~\ref{rem:asc-disk}
there is a (small) constant $2\varsigma>0$ such that there are inclusions
of descending and ascending disks
\begin{equation}\label{eq:F-G-inc}
     W^u_{2\varsigma}\subset \Ff^\infty(B^-_R),\qquad 
     W^s_{2\varsigma}\subset \Gg^\infty(B^+_R).
\end{equation}
Set
\begin{equation}\label{eq:new-disks}
     D^-:=\pi_- W^u_{2\varsigma},\qquad D^+:=\pi_+ W^s_{2\varsigma},\qquad V:=D^-\times D^+.
\end{equation}
Then by the graph property of $\Ff^\infty$ and $\Gg^\infty$ it holds that
$     \Ff^\infty(D^-)=W^u_{2\varsigma}
$
and
$
     \Gg^\infty(D^+)=W^s_{2\varsigma}
$
as illustrated by the left part of Figure~\ref{fig:fig-flattening}.
Consequently $D^\pm$ is a {\bf disk}, that is a set diffeomorphic to a
closed ball. Note that
\begin{equation}\label{eq:un-stab-id}
\begin{aligned}
     W^u_{2\varsigma}&=W^u(0,V)=V\cap W^u(0,B_{\rho_0}),
     \\
     W^s_{2\varsigma}&=W^s(0,V)\hspace{.02cm}=V\cap W^s(0,B_{\rho_0}),
\end{aligned}
\end{equation}
by negative and positive invariance under $\phi$ of $W^u_{2\varsigma}$ and
$W^s_{2\varsigma}$, respectively.
Following Palis and de~Melo~\cite[Ch.~2 \S 7]{palis:1982a} observe that the $C^r$ map
$$
     \vartheta(x,y):=\left(x-G^\infty(y),y-F^\infty(x)\right)
$$
defined on $B^-_R\times B^+_R$ satisfies
$\vartheta(0)=0$ and $d\vartheta(0)=\1$. In particular, it is a
diffeomorphism locally near the fixed point $0$. 
Choosing $R$ and $2\varsigma$ smaller,  if necessary, we assume without
loss of generality that $\vartheta$ is a diffeomorphism onto its image.
Note that $\vartheta(x,0)=(x,-F^\infty(x))$ and $\vartheta(0,y)=(-G^\infty(y),y)$. Consequently
the map $\vartheta$ diffeomorphically maps $\Ff^\infty(D^-)=W^u_{2\varsigma}$ to the
disk $D^-$ and similarly for $W^s_{2\varsigma}$ and $D^+$; see Figure~\ref{fig:fig-flattening}.
\begin{figure}
  \centering
  \includegraphics
                             {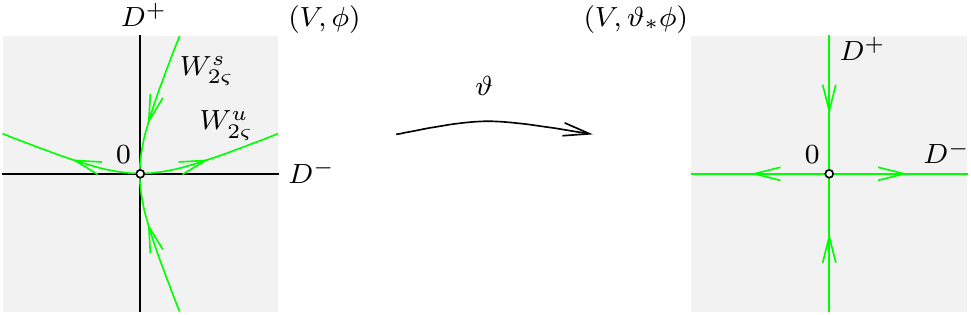}
  \caption{Local (un)stable manifolds get flattened out}
  \label{fig:fig-flattening}
\end{figure}
Our new local model will be the product of disks $V:=D^-\times D^+$
acted upon by the new local flow
$\vartheta_*\phi_t=\phi_t\circ\vartheta\circ{\phi_t}^{-1}$
(denoted again by $\phi_t$ in Remark~\ref{rem:flat-inv-mfs})
whose local unstable and stable manifolds are $D^-$ and $D^+$, respectively.
Note that the latter represent the descending disk $W^u_{2\varsigma}(x)$
and the ascending disk $W^s_{2\varsigma}(x)$ in this new local model.
The ODE which generates the flow $\vartheta_*\phi_t$
arises by re-doing Lemma~\ref{le:conv-coord}
starting from the ``Ansatz'' $u(t)=\exp_x\left({\vartheta}^{-1}\xi(t)\right)$ for $\xi$.
Obviously the new ODE is similar to~(\ref{eq:f}) involving, in addition,
the diffeomorphism $\vartheta$ and its linearization.
Since $\vartheta$ is of class $C^r$ and defined on the compact set $V$
it admits uniform $C^r$-bounds. Therefore relevant estimates for the new and
the old ODE are equivalent up to constants.
This justifies to simplify notation as follows.

\begin{remark}[Coordinates with flattened local manifolds]\label{rem:flat-inv-mfs}
By the discussion above and
in order to simplify notation we will, without loss of generality,
continue to work with the ODE~(\ref{eq:f}), that is we omit
$\vartheta$ in our formulas, and assume that the local (un)stable
manifolds $W^u_{2\varsigma}$ and $W^s_{2\varsigma}$ for the (new) local flow $\phi$ on $V$
are disks $D^-\subset E^-$ and $D^+\subset E^+$. To summarize we
assume that
\begin{equation}\label{eq:DDD}
     V:=D^-\times D^+\subset B^-_R\times B^+_R\subset B_1,\quad
     D^-=W^u_{2\varsigma},\quad D^+=W^s_{2\varsigma},
\end{equation}
as illustrated by Figure~\ref{fig:fig-local-setup-NEW} and where
$\varsigma>0$ has been fixed according to~(\ref{eq:F-G-inc}).
\end{remark}

\section{Proof of backward $\lambda$-Lemma} 
Suppose $\IND(x)=k\in\{1,\dots,n-1\}$, so the smallest
and largest eigenvalue of the Hessian operator $A=A_x$
satisfy $\lambda_1<0<\lambda_n$.
Consider the continuous function $\kappa(\rho)$ with $\kappa(0)=0$ and the
Lipschitz constant $\kappa_*>0$ of the non-linearity $h$ provided
by Lemma~\ref{le:f}.
Pick an exponential decay rate $\lambda\in(0,d)$ for the
elements of the complete metric space $Z^T$ to be defined below
and consider the two constants
$
     \delta\in\left(0,\min\{1,\tfrac{d-\lambda}{2}\}\right)
$
and
$
     \mu\in\left(\lambda,\tfrac{d+\lambda}{2}\right)
     \subset(\lambda,d)
$
defined in Figure~\ref{fig:fig-spectral-gap}.
To ensure the second of the two endpoint conditions in~(\ref{eq:xi-end}) set
\begin{equation}\label{eq:T_1}
     T_1=T_1(\lambda,\varkappa)
     :=-\frac{\ln\varkappa}{\lambda}\ge 0.
\end{equation}
Fix $\rho=\rho(\lambda)\in(0,1)$ sufficiently small such that
\begin{equation}\label{eq:rho-backward}
     \kappa(\rho)\left(
     \frac{4}{\lambda}+\frac{1}{\delta}+1\right)
     \le\frac{1}{8}
\end{equation}
and such that the closed $\rho$-neighborhood $U_\rho(W^u_\varsigma)$
in $X$ of the descending disk is contained in $V=D^-\times D^+\subset B_1$; see~(\ref{eq:DDD}).
Observe that in this case the radius $\frac{\rho}{2}$ ball $B^+\subset E^+$ is contained in $D^+$.
Fix $T_2=T_2(\mu)>0$ such that $e^{-T_2 \mu/4}\le 1/8$; this will be used in Step~5. Set
\begin{equation}\label{eq:T_0}
     T_0=T_0(\lambda,\mu(\lambda),\varkappa)
     :=\max\{T_1,T_2,1\}
     \ge 1.
\end{equation}

\begin{remark}[Mixed Cauchy problem]
\label{rem:mixed-Cauchy}
The key observation to represent the pre-image 
${\phi_T}^{-1}\Dd_{z_-}$ under the time-$T$-map $\phi_T$ as a graph over the stable tangent space
$E^+$ is the fact that there is a well posed {\bf mixed Cauchy problem}. Namely, instead
of prescribing precisely the endpoint, by the representation
formula~(\ref{eq:representation-formula}) it makes sense to prescribe only the ($\pi_+$)-part
of the initial point, but in addition the  ($\pi_-$)-part of the endpoint.
This way one circumvents using a general backward flow -- only the
(by Convention~\ref{con:back-flow} legal) algebraic one on the unstable manifold is
used; cf.~\cite[Rmk.~3]{weber:2014a}.
\end{remark}

We prove below that for each $z_+\in E^+$ with $\norm{z_+} \le\rho/2$ there is precisely one
flow line $\xi=\xi_{z_-,z_+}^T$ with initial condition $\pi_+ \xi(0)=z_+$ and endpoint condition
$\xi(T)\in\Dd_{z_-}$. The latter is equivalent to
\begin{equation}\label{eq:xi-end}
     \pi_-\xi(T)={z_-}
     \quad\wedge\quad
     \Norm{\xi(T)-{z_-}}\le\varkappa.
\end{equation} 
The key step to determine the unique semi-flow line $\xi$ associated
to the triple $(T,{z_-},z_+)$ is to set up a strict contraction $\Psi^T$ on a complete
metric space $Z^T$ whose (unique) fixed point is $\xi$. More precisely, define
\begin{equation}\label{eq:ZT}
     Z^T
     =Z^{T,{z_-}}_{\lambda,\rho}
     :=\left\{
     \xi\in
     C^0([0,T],X)\,\colon
     \Norm{\xi-\phi_\cdot{z_-^T}}
     _{\exp}
     \le\rho\right\},
\end{equation}
where ${z_-^T}:=\phi_{-T}({z_-})$ and
\begin{equation}\label{eq:exp-T}
     \Norm{\xi}_{\exp}
     =\Norm{\xi}_{\exp,T}
     :=\max_{t\in[0,T]} 
     e^{t\lambda}
     \Norm{\xi(t)},
\end{equation}
and consider the operator
$\Psi^T=\Psi^T_{{z_-},z_+}$ defined for
$\xi\in Z^T$ by
\begin{equation}\label{eq:Psi}
\begin{split}
     \left(\Psi_{{z_-},z_+}^T
     \xi\right)(t)
    &:=e^{-tA}z_+
     +\int_0^t e^{-(t-\sigma)A}\pi_+
     h(\xi(\sigma))
     \, d\sigma
     \\
    &\quad
     +e^{-(t-T)A^-}{z_-}
     -\int_t^T e^{-(t-\sigma)A^-}\pi_-
     h(\xi(\sigma))
     \, d\sigma
\end{split}
\end{equation}
for every $t\in[0,T]$. The fixed points of $\Psi^T$ correspond to the desired flow
trajectories by Proposition~\ref{prop:representation-formula}.
In Step~1 and Step~2 below we show that $\Psi^T$ is a strict contraction on $Z^T$. Hence by the
Banach-Cacciopoli Fixed Point Theorem, see e.g.~\cite[\S 2.2]{chow:1982a},
it admits the unique fixed point $\xi_{{z_-},z_+}^T$ and we define the map
\begin{equation}\label{eq:G^T}
     G^T:S^u_\eps\times B^+\to E^-
     ,\qquad
     \left({z_-},z_+\right)\mapsto 
     \pi_-\xi_{{z_-},z_+}^T(0)
     =:G^T_{z_-}(z_+).
\end{equation}
For latter use we calculate for $t\in[0,T]$ and $\alpha>0$ the integrals
\begin{equation}\label{eq:int-1}
     \int_0^t e^{-(t-\sigma)\alpha}\, d\sigma
     =\frac{1-e^{-t\alpha}}{\alpha}
     ,\qquad
     \int_t^T e^{(t-\sigma)\alpha}\, d\sigma
     =\frac{1-e^{(t-T)\alpha}}{\alpha}.
\end{equation}
The proof takes six steps. Fix ${z_-}\in S^u_\eps$ and $z_+\in B^+$. Abbreviate $\Psi^T=\Psi_{{z_-},z_+}^T$.

\vspace{.2cm}
\noindent
{\bf Step 1.}
{\it For $T>0$ the set $Z^T$ equipped with the exp~norm~metric is a complete
metric space, any  $\xi\in Z^T$ takes values in $V\subset B_1$, and $\Psi^T$ acts on~$Z^T$.
}

\begin{proof}
For the compact domain
$[0,T]$ the space $C^0([0,T],X)$ is complete with respect to the
sup norm, hence with respect to the exp norm as both norms
are equivalent by compactness of $[0,T]$. By its definition the subset $Z^T$
of $C^0([0,T],X)$ is closed with respect to the exp norm. 
Since $U_\rho(W^u_\eps)\subset U_\rho(W^u_\varsigma)\subset V$
by our choice of $\rho$, the elements of $Z^T$ take values in $V\subset B_1$.

To see that $\Psi^T$ acts on $Z^T$
we need to verify that $\Psi^T\xi$ is continuous
and satisfies the exponential decay condition
whenever $\xi\in Z^T$.
By definition $\Psi^T\xi$ is a sum of four terms
each of which is continuous as a map $[0,T]\to X$.
For terms one and three continuity, in fact
smoothness, follows from definition~(\ref{eq:e^tA})
of the exponential as a power series. Continuity
of both integral terms, terms two and four, uses
the same argument. Denote term two by $F(t)$
from now on. Continuity of $F:[0,T]\to X$ and the
fact that $F(0)=0$ (used in Step~3 below) both
follow from continuity and boundedness of the map
$\tilde{h}:=\pi_+\circ h\circ\xi:[0,T]\to E^+$
which holds true since $\xi:[0,T]\to X$ is
continuous and bounded by definition of $Z^T$
and so is the non-linearity $h$ by the Lipschitz
Lemma~\ref{le:f}.

We prove exponential decay. For $t\in[0,T]$ consider the flow trajectory given by
$\phi_t z_-^T$ where ${z_-^T}:=\phi_{-T}({z_-})$.
By the representation formula~(\ref{eq:representation-formula}) it satisfies
\begin{equation}\label{eq:phi-s}
\begin{split}
     \phi_t{z_-^T}
    &=\int_0^t e^{-(t-\sigma)A}\pi_+
     h(\phi_\sigma {z_-^T})\, d\sigma
   \\
    &\quad
     +e^{-(t-T)A^-}{z_-}
     -\int_t^T e^{-(t-\sigma)A^-}\pi_-
     h(\phi_\sigma {z_-^T})\, d\sigma.
\end{split}
\end{equation}
Here we used that ${z_-}\in S^u_\eps=\p W^u_\eps$, thus
${z_-^T}=\phi_{-T}{z_-}$, lies in the (backward flow invariant) descending
disk $W^u_\eps\subset D^-$. Hence $\pi_+{z_-^T}=0$ and $\pi_-\phi_T {z_-^T}=\pi_-{z_-}={z_-}$.
By~(\ref{eq:Psi}) and~(\ref{eq:phi-s}) we get for $t\in[0,T]$ the estimate
\begin{equation}\label{eq:Psixi-bounded}
\begin{split}
    &e^{t\lambda}\Norm{\left(\Psi^T\xi\right)(t)
     -\phi_t{z_-^T}
     }
   \\
    &\le
     e^{t\lambda}\Norm{e^{-tA}z_+}
     +e^{t\lambda}\int_0^t e^{-(t-\sigma)\mu}
     \Norm{h(\xi(\sigma))-h(\phi_\sigma{z_-^T})
     } d\sigma
   \\
    &\quad
     +e^{t\lambda}\int_t^T e^{(t-\sigma)\mu}
     \Norm{h(\xi(\sigma))-h(\phi_\sigma{z_-^T})
     } d\sigma
   \\
\end{split}
\end{equation}
\begin{equation*}
\begin{split}
    &\le e^{t\lambda} e^{-t\mu}\Norm{z_+}
     +\kappa(\rho)
     \Norm{\xi-\phi_\cdot{z_-^T}}_{\exp}
     \int_0^t
     e^{-(t-\sigma)\delta}
     \, d\sigma
   \\
    &\quad
     +\kappa(\rho)
     \Norm{\xi-\phi_\cdot{z_-^T}}_{\exp}
     \int_t^T e^{(t-\sigma)(\lambda+\mu)}
     \, d\sigma
   \\
    &\le
     \frac{\rho}{2} e^{-t\delta}
     +\kappa(\rho)
     \left(\frac{1}{\delta}+\frac{1}{\lambda+\mu}
     \right)\rho
     \le\rho
\end{split}
\end{equation*}
where we used the exponential decay estimates in
Proposition~\ref{prop:exp-est}. To the
non-linearity $h$ we applied the Lipschitz
Lemma~\ref{le:f} to bring in the
constant~$\kappa(\rho)$. We also used the fact that
the norm of a projection is bounded by $1$.
Moreover, we multiplied both
integrands by $e^{-\sigma\lambda} e^{\sigma\lambda}$
to obtain the exp norms which are bounded by
$\rho$ by definition of $Z^T$. Inequality three
uses that $z_+\in B^+$ and the integral
estimates in~(\ref{eq:int-1}). The final step is by
smallness~(\ref{eq:rho-backward}) of $\rho$.
\end{proof}

\noindent
{\bf Step 2.}
{\it For $T>0$ the map $\Psi^T$ is a contraction on $Z^T$. Each image point $\Psi^T\xi$ satisfies
the initial condition $\pi_+\left(\Psi^T\xi\right)(0) =z_+$ and for $T\ge T_1$ also the endpoint 
conditions~(\ref{eq:xi-end}), i.e. it hits $\Dd_{z_-}=\{{z_-}\}\times B^+_\varkappa$ at time $T$.
}

\begin{proof}
Pick $\xi_1,\xi_2\in Z^T$. Then similarly to~(\ref{eq:Psixi-bounded}) we obtain that
\begin{equation*}
\begin{split}
     e^{t\lambda}\Norm{\left(\Psi^T\xi_1\right)(t)
     -\left(\Psi^T\xi_2\right)(t)}
     &\le e^{t\lambda}
      \int_0^t e^{-(t-\sigma)\mu}
      \Norm{h(\xi_1(\sigma))
      -h(\xi_2(\sigma))} d\sigma
   \\
     &\quad
      +e^{t\lambda}\int_t^T e^{(t-\sigma)\mu}
      \Norm{h(\xi_1(\sigma))
      -h(\xi_2(\sigma))} d\sigma
    \\
      &\le \kappa(\rho)
       \left(\frac{1}{\delta}
       +\frac{1}{\lambda+\mu} 
       \right)
       \Norm{\xi_1-\xi_2}_{\exp}
\end{split}
\end{equation*}
for every $t\in[0,T]$. Use smallness~(\ref{eq:rho-backward}) of $\rho$
and take the maximum over $t\in[0,T]$ to get that $\norm{\Psi^T\xi_1-\Psi^T\xi_2}_{\exp}
\le\frac{1}{2}\norm{\xi_1-\xi_2}_{\exp}$.

To obtain the identities $\pi_+\left(\Psi^T\xi\right)(0)=z_+$ and
$\pi_-\left(\Psi^T\xi\right)(T)={z_-}$ just
set $t=0$ in the definition~(\ref{eq:Psi}) of $\Psi^T$
and use the identities $\pi_+\pi_-=\pi_-\pi_+=0$,
continuity of the exponential series~(\ref{eq:e^tA}),
and continuity and boundedness of both integrands.
Concerning the second endpoint condition
in~(\ref{eq:xi-end}) assume $T\ge T_1$ and evaluate
estimate~(\ref{eq:Psixi-bounded}) at $t=T$ to get that
$$
     \Norm{\left(\Psi\xi\right)(T)
     -{z_-}}_X
     \le \rho
     e^{-T\lambda}
     \le e^{-T_1\lambda}
     =\varkappa
$$
where the last step is by $\rho\le\rho_0/2\le 1$ and definition~(\ref{eq:T_1}) of $T_1$.
\end{proof}

\noindent
{\bf Step 3.}
{\it For $T>0$ the map $G^T:S^u_\eps\times B^+\to E^-$ defined by~(\ref{eq:G^T}) is of class $C^r$ and,
for each ${z_-}\in S^u_\eps$, the map $G^T_{z_-} :=G^T({z_-},\cdot):B^+\to E^-$ satisfies
$$
     G^T_{z_-}(0)
     =\phi_{-T}({z_-})
     =:{z_-^T}
     ,\qquad
     \mathrm{graph}\, G^T_{z_-}
     =\left\{\xi_{{z_-},z_+}^T(0)
     \,\big|\, 
     z_+\in B^+\right\}.
$$
} 

\begin{proof}
By Step~2 and its proof the map
$$
     \Psi^T:
     S^u_\eps\times B^+\times Z^T\to Z^T,\qquad
     ({z_-},z_+,\xi)\mapsto\Psi^T_{{z_-},z_+}\xi
$$
is a uniform contraction on $Z^T$ with contraction factor $\tfrac12$. (Strictly speaking $Z^T$ depends on
${z_-}$, but the complete metric spaces associated to different ${z_-}$'s are naturally
isomorphic.) Observe that $\Psi^T$ is linear, hence smooth, in ${z_-}$ and in
$z_+$ and of class $C^r$ in $\xi$, because $h$ is of class $C^r$ by the Lipschitz Lemma~\ref{le:f}.
Hence by the uniform contraction principle, see e.g.~\cite[\S 2.2]{chow:1982a}, the map 
\begin{equation}\label{eq:unif-contraction-theta}
\begin{split}
     \theta=\theta^T:S^u_\eps\times B^+
   &\to Z^T
   \\
     ({z_-},z_+)
   &\mapsto
     \xi^T_{{z_-},z_+}
\end{split}
\end{equation}
which assigns to $({z_-},z_+)$  the unique fixed point of $\Psi^T_{{z_-},z_+}$ is of class
$C^r$ and so is its composition with the (linear) evaluation map $ev_0:Z^T\to X$, $\xi\mapsto\xi(0)$,
and the (linear) projection $\pi_-:X\to E^-$. But the composition of these maps is $G^T$ by
its definition~(\ref{eq:G^T}). So $G^T$, thus $\Gg$, is of class $C^r$ in ${z_-}$ and $z_+$.

Consider the flow trajectory $\eta:[0,T]\to X$, $t\mapsto\phi_t{z_-^T}$,
which runs from ${z_-^T}$ to ${z_-}$ inside the (backward invariant) descending
disk $W^u_\eps\subset D^-$. Hence $\pi_+\eta(0)=0$ and $\pi_-\eta(T)=\eta$.
Thus $\eta=\xi^T_{{z_-},0}$ by uniqueness of the fixed point. Hence
$
     G^T_{z_-}(0)
     :=\pi_-\xi^T_{{z_-},0}(0)
     =\pi_-\eta(0)
     =\pi_-{z_-^T}
     ={z_-^T}
$.
To get the desired representation of $\mathrm{graph}\, G^T_{z_-}$ observe that
\begin{equation}\label{eq:G=xi}
     \Gg^T_{z_-}(z_+)
     :=\left( G^T_{z_-}(z_+),z_+\right)
     =\left(\pi_-\xi^T_{{z_-},z_+}(0),
     \pi_+ \xi^T_{{z_-},z_+}(0)\right)
     =\xi^T_{{z_-},z_+}(0)
\end{equation}
by Definition~(\ref{eq:G^T}). The first identity also uses the fixed point
property and the initial condition from Step~2. The final identity is by $\pi_-\oplus\pi_+=\1_X$.
\end{proof}

\noindent
{\bf Step 4.}
{\it The map $T\mapsto\Gg(T,{z_-},z_+)$ is Lipschitz continuous.
If $f$ is of class $C^{2,1}$ near $x$, then the derivative
$T\mapsto\frac{d}{dT}\Gg(T,{z_-},z_+)$ is also Lipschitz continuous.
} 

\begin{proof}
{\it We prove that $\Gg$ is Lipschitz continuous in $T$.}
Fix $T\ge T_0>0$, ${z_-}\in S^u_\eps$, and $z_+\in{B^+}$. Consider the fixed point
$\xi^T=\xi^T_{{z_-},z_+}$ of the strict contraction $\Psi^T=\Psi^T_{z_-,z_+}$ defined
by~(\ref{eq:Psi}). The fixed point of $\Psi^{T+\tau}$ is given by
\begin{equation}\label{eq:def-xi}
\begin{split}
     \xi^{T+\tau}(t)
    &=e^{-tA}z_+
     +\int_0^t e^{-(t-\sigma)A}\pi_+ h(\xi^{T+\tau}(\sigma)) \, d\sigma
     \\
    &\quad
     +e^{-(t-T-\tau)A^-}{z_-}
     -\int_t^{T+\tau} e^{-(t-\sigma)A^-}\pi_- h(\xi^{T+\tau}(\sigma)) \, d\sigma.
\end{split}
\end{equation}
For $t\in[0,T]$ and $\tau\ge0$ we obtain, similarly to~(\ref{eq:Psixi-bounded}), the estimate
\begin{equation*}
\begin{split}
    &\Norm{\xi^{T+\tau}(t)-\xi^T(t)}
   \\
    &\le\int_0^t e^{-(t-\sigma)\mu}
     \Norm{h(\xi^{T+\tau}(\sigma))-h(\xi^T(\sigma))} \, d\sigma
     +\bigl\| \bigl(e^{\tau A^-}-\1\bigr) e^{-(t-T)A^-}{z_-}\bigr\|
   \\
    &
     +\int_t^T e^{(t-\sigma)\mu}
     \Norm{h(\xi^{T+\tau}(\sigma))-h(\xi^T(\sigma))} d\sigma
     +\int_T^{T+\tau} e^{(t-\sigma)\mu} \Norm{h(\xi^{T+\tau}(\sigma))} d\sigma
\end{split}
\end{equation*}
\begin{equation*}
\begin{split}
    &\le \kappa(\rho)
     \Norm{\xi^{T+\tau}-\xi^T}_{C^0([0,T],X)}
     \left(\int_0^t e^{-(t-\sigma)\mu} \, d\sigma+\int_t^T e^{(t-\sigma)\mu}\, d\sigma\right)
   \\
    &\quad
     +\tau\abs{\lambda_1}\cdot \underbrace{e^{(t-T)\mu}}_{\le 1} \Norm{{z_-}}
     +\kappa(\rho)\int_T^{T+\tau} e^{(t-\sigma)\mu}\, d\sigma
   \\
    &\le \kappa(\rho)\frac{2}{\mu}\Norm{\xi^{T+\tau}-\xi^T}_{C^0([0,T],X)}
     +\tau\abs{\lambda_1}
     + \kappa(\rho) \frac{1-e^{-\tau\mu}}{\mu}
   \\
    &\le\frac{1}{8}
     \Norm{\xi^{T+\tau}-\xi^T}_{C^0([0,T],X)}
     +\tau\left(\abs{\lambda_1}+1\right).
\end{split}
\end{equation*}
Inequality two uses the Lipschitz Lemma~\ref{le:f} for $f$ and the exponential
estimates in Proposition~\ref{prop:exp-est}.
Moreover, we used the fact that $e^{-s\mu}\le 1$ to obtain
\begin{equation}\label{eq:1-e}
     \frac{1-e^{-\mu\tau}}{\mu}
     =\int_0^\tau e^{-s\mu}\, ds
     \le\tau.
\end{equation}
The identity
\begin{equation}\label{eq:ident-1}
     e^{\tau A^-}-\1=A^-\int_0^\tau e^{\sigma A^-}\, d\sigma
\end{equation}
follows by definition~(\ref{eq:e^tA}) of the exponential as a series. Together with
 the exponential estimates in Proposition~\ref{prop:exp-est} we get that
\begin{equation}\label{eq:e-1}
     \Norm{e^{\tau A^-}-\1}
     =\Norm{A^-\int_0^\tau e^{\sigma A^-}\, d\sigma}
     \le\Abs{\lambda_1}\int_0^\tau e^{-\sigma\mu}\, d\sigma
     \le\Abs{\lambda_1}\tau.
\end{equation}
Coming back to inequality two, it remains to explain the estimate for
term four. Here we used that $\xi^{T+\tau}\in Z^{T+\tau}$
takes values in $B_1$ by Step~1.
\\
Inequality three uses~(\ref{eq:int-1}) for the integrals and the fact that $\norm{{z_-}}\le 1$.
Inequality four uses estimate~(\ref{eq:1-e}) and the smallness assumption~(\ref{eq:rho-backward})
on $\rho$ by which $\kappa(\rho)\le1$. Now take the
supremum over $t\in[0,T]$ to obtain that
\begin{equation}\label{eq:xi-xi}
     \Norm{\xi^{T+\tau}-\xi^T}_{C^0([0,T],X)}
     \le {c_1}\tau
\end{equation}
with constant ${c_1}=2(\abs{\lambda_1}+1)$. By~(\ref{eq:G=xi}) this shows that
\begin{equation}\label{eq:C0-conv}
     \Norm{\Gg^{T+\tau}_{z_-}(z_+)-\Gg^T_{z_-}(z_+)}
     =\Norm{\xi^{T+\tau}(0)-\xi^T(0)}
     \le {c_1}\tau
\end{equation}
for all $T\ge T_0$ and $\tau\ge 0$. In other words, the map
$T\mapsto \Gg(T,{z_-},z_+)=\Gg^T_{z_-}(z_+)$
is Lipschitz. The difference $\xi^{T+\tau}-\xi^T$
is illustrated by~\cite[Fig.~5]{weber:2014a}.

{\it We prove that the map $T\mapsto\frac{d}{dT}\Gg(T,{z_-},z_+)$ is Lipschitz continuous.}
Set
\begin{equation}\label{eq:Theta-T}
     \Theta^T(t)=\Theta^T_{{z_-},z_+}(t)
     :=\left.\tfrac{d}{d\tau}\right|_{\tau=0}
     \xi^{T+\tau}_{{z_-},z_+}(t)
\end{equation}
for every $t\in[0,T]$. Calculation shows that this derivative is given by
\begin{equation*}
\begin{split}
     \Theta^T(t)
    &=\int_0^t e^{-(t-\sigma)A}\pi_+
     \left(dh|_{\xi^T(\sigma)}\circ \Theta^T(\sigma)\right)
     d\sigma
     +A^- e^{-(t-T)A^-}{z_-}
     \\
    &\quad
     -e^{-(t-T)A^-}\pi_- h(\xi^T(T))
     -\int_t^T e^{-(t-\sigma)A^-}\pi_-
     \left(dh|_{\xi^T(\sigma)}\circ \Theta^T(\sigma)\right)
     d\sigma.
\end{split}
\end{equation*}
Since $\frac{d}{dT}\Gg(T,{z_-},z_+)=\Theta^T(0)$
by~(\ref{eq:G=xi}), it remains to show that the map
$T\mapsto \Theta^T(0)\in X$ is Lipschitz continuous.
By definition of $\Theta^T$ we get the identity
\begin{equation*}
\begin{split}
     \Theta^{T+\tau}(t)-\Theta^T(t)
    &=\int_0^t e^{-(t-\sigma)A}\pi_+
     dh|_{\xi^{T+\tau}(\sigma)}\circ
     \left( \Theta^{T+\tau}(\sigma)-\Theta^T(\sigma)\right)
     \, d\sigma
  \\
    &\quad
     +\int_0^t e^{-(t-\sigma)A}\pi_+
     \left(
     dh|_{\xi^{T+\tau}(\sigma)}-dh|_{\xi^T}(\sigma)
     \right)\circ 
     \Theta^T(\sigma)
     \, d\sigma
   \\
    &\quad
     +\bigl(e^{\tau A^-}-\1\bigr)
     A^- e^{-(t-T)A^-}{z_-}
   \\
    &\quad
     -\bigl(e^{\tau A^-}-\1\bigr)
     e^{-(t-T)A^-}\pi_- h(\xi^{T+\tau}(T+\tau))
   \\
    &\quad
     -
\underline{
     e^{-(t-T)A^-}\pi_-
     \left(
     h(\xi^{T+\tau}(T+\tau))-h(\xi^T(T))
     \right)
}
   \\
    &\quad
     -\int_t^T e^{-(t-\sigma)A^-}\pi_-
     dh|_{\xi^{T+\tau}(\sigma)}\circ
     \left( \Theta^{T+\tau}(\sigma)-\Theta^T(\sigma)\right)
     \, d\sigma
  \\
    &\quad
     -\int_t^T e^{-(t-\sigma)A^-}\pi_-
     \left(
     dh|_{\xi^{T+\tau}(\sigma)}-dh|_{\xi^T}(\sigma)
     \right)\circ 
     \Theta^T(\sigma)
     \, d\sigma
   \\
    &\quad
     -\int_T^{T+\tau}
     e^{-(t-\sigma)A^-}\pi_-
     dh|_{\xi^{T+\tau}(\sigma)}\circ \Theta^{T+\tau}(\sigma)
     \, d\sigma
\end{split}
\end{equation*}
for all $t\in[0,T]$ and $\tau\ge0$. To obtain lines one and two add zero; similarly
for lines four and five and lines six and seven. Abbreviate the $C^0([0,T],X)$ norm
by $\norm{\cdot}_{C^0_T}$. Combine line one with line six and line two
with line seven to obtain, similarly as above and again abbreviating
${c_1}=2(\abs{\lambda_1}+1)$, the estimate
\begin{equation*}
\begin{split}
    &\Norm{\Theta^{T+\tau}(t)-\Theta^T(t)}_X
   \\
    &\le \kappa(\rho)\Norm{\Theta^{T+\tau}-\Theta^T}_{C^0_T}
     \left(\int_0^t e^{-(t-\sigma)\mu}\, d\sigma+\int_t^T e^{(t-\sigma)\mu}\, d\sigma\right)
   \\
    &\quad
     +\kappa_*\Norm{\xi^{T+\tau}-\xi^T}_{C^0_T}\Norm{\Theta^T}_{C^0_T}
     \left(\int_0^t e^{-(t-\sigma)\mu}\, d\sigma+\int_t^T e^{(t-\sigma)\mu}\, d\sigma\right)
   \\
    &\quad
     +\tau\Abs{\lambda_1}^2\Norm{{z_-}}
     +\tau\Abs{\lambda_1}\kappa(\rho)\Norm{\xi^{T+\tau}(T+\tau)}
     +
\underline{
     \kappa(\rho)\Norm{\xi^{T+\tau}(T+\tau)-\xi^T(T)}
}
   \\
    &\quad
     +\kappa(\rho)\Norm{\Theta^{T+\tau}}_{C^0_{T+\tau}}
     \int_T^{T+\tau} e^{(t-\sigma)\mu}\, d\sigma
   \\
    &\le \kappa(\rho)\Norm{\Theta^{T+\tau}-\Theta^T}_{C^0_T}\frac{2}{\mu}
     +\kappa_*({c_1}\tau){c_1} \frac{2}{\mu}
     +\tau\Abs{\lambda_1}\left(\Abs{\lambda_1}+1\right)
   \\
    &\quad
     +
\underline{
     \kappa(\rho)\tau\left(c_1+2\lambda_n+1\right)
}
     +\kappa(\rho) {c_1}\frac{1-e^{-\tau\mu}}{\mu}
   \\
\end{split}
\end{equation*}
\begin{equation*}
\begin{split}
    &\le\frac{1}{8}
     \Norm{\Theta^{T+\tau}-\Theta^T}_{C^0_T}
     +\tau\left(\frac{2\kappa_*{c_1}^2}{\mu}+\frac{{c_1}^2}{4}
     +
\underline{
     \left(c_1+2\lambda_n+1\right)
}
     +{c_1}\right)
\end{split}
\end{equation*}
for all $t\in[0,T]$ and $\tau\ge 0$. Inequality one uses the
exponential estimates in Proposition~\ref{prop:exp-est} and the Lipschitz
Lemma~\ref{le:f} for $dh$. To obtain line three we used~(\ref{eq:e-1})
and we added zero in the form of $h(0)$, thus bringing in $\kappa(\rho)$.
\\
Inequality two uses the following arguments. Estimate the first pair of integrals by $\frac{2}{\mu}$
using~(\ref{eq:int-1}); similarly for the second pair. Apply estimate~(\ref{eq:xi-xi}) and
recall that ${z_-}\in B_1$ by our local setup. Use again~(\ref{eq:xi-xi}) to conclude that
\begin{equation}\label{eq:Theta-good!}
     \Norm{\Theta^T(t)}
     =\Norm{\tfrac{d}{dT}\xi^{T}(t)}
     =\lim_{\tau\to 0}
     \frac{\Norm{\xi^{T+\tau}(t)-\xi^T(t)}}{\tau}
     \le{c_1}
\end{equation}
whenever $t\in[0,T]$. Thus
$
     \norm{\Theta^T}_{C^0_T}\le {c_1}
$
and, of course, the same is true when $T$ is replaced by $T+\tau$. By Step~1
the elements of the complete metric spaces $Z^T$ (and $Z^{T+\tau}$) take
values in $B_1$. We also used that $\kappa(\rho)\le1$
by~(\ref{eq:rho-backward}) and that $e^{(t-T)\mu}\le1$.
{\it The estimate for the difference
$\xi^{T+\tau}(T+\tau)-\xi^T(T)\in X$ in line four
will be carried out separately below};
see~(\ref{eq:xi-xi-T-tau}) for the result.
\\
Inequality three uses the smallness assumptions~(\ref{eq:rho-backward}) on $\rho$ and
estimate~(\ref{eq:1-e}). 

Now take the supremum over $t\in[0,T]$ to get the estimate
\begin{equation}\label{eq:V-V}
     \Norm{\Theta^{T+\tau}-\Theta^T}_{C^0([0,T],X)}
     \le C \tau
\end{equation}
for all $T\ge T_0$ and $\tau\ge0$ and where
$C=C(\lambda_1,\lambda_n,\kappa_*,\mu^{-1})$ is a constant. So
\begin{equation*}
     \Norm{\tfrac{d}{dT}\Gg(T+\tau,{z_-},z_+)-\tfrac{d}{dT}\Gg(T,{z_-},z_+)}
     =\Norm{\Theta^{T+\tau}(0)-\Theta^T(0)}
     \le C\tau
\end{equation*}
which means that the map $T\mapsto \frac{d}{dT}\Gg(T,{z_-},z_+)$ is Lipschitz continuous.

{\it As mentioned above it remains to estimate the norm of the difference:}
\begin{equation*}
\begin{split}
     \xi^{T+\tau}(T+\tau)-\xi^T(T)
    &=\left(e^{-\tau A}-\1\right) e^{-T A}z_+
  \\
    &\quad
     +\int_0^T e^{-(T+\tau-\sigma)A}\pi_+
     \left(
     h(\xi^{T+\tau}(\sigma))-h(\xi^T(\sigma))
     \right)
     \, d\sigma
  \\
    &\quad
     +\int_0^T
     \left(e^{-\tau A}-\1\right)
     e^{-(T-\sigma)A}\pi_+ h(\xi^T(\sigma))
     \, d\sigma
   \\
    &\quad
     +\int_T^{T+\tau}
     e^{-(T+\tau-\sigma)A}\pi_+
     h(\xi^{T+\tau}(\sigma))
     \, d\sigma.
\end{split}
\end{equation*}
Note that we added zero to obtain terms II and III in this sum I+II+III+IV of four.
We proceed by estimating each of the four terms individually.
\\
I)~ Concerning term one the identity for $A^+$ corresponding to~(\ref{eq:ident-1}) shows that
\begin{equation*}
\begin{split}
     \Norm{\left(e^{-\tau A}-\1\right) e^{-TA}z_+}
    &=\Norm{\int_0^\tau -A^+ e^{-(s+T)A^+} z_+ \, ds}
   \\
    &\le\Norm{A^+}\cdot\Norm{z_+}\cdot\int_0^\tau e^{-(s+T)\mu}\, ds
   \\
    &\le\lambda_n e^{-T\mu}\frac{1-e^{-\tau\mu}}{\mu}
     \le\tau\lambda_n e^{-T\mu}
\end{split}
\end{equation*}
where the last step uses~(\ref{eq:1-e}) and $\lambda_n$ is the
largest eigenvalue of $A$; see~(\ref{eq:spec-A}). It might be interesting
to see how the in infinite dimensions infinite quantity $\norm{A^+}$
can be avoided, also in III) below; cf.~\cite[p.~954]{weber:2014a}.
\\
II)~For term two we get the estimate
\begin{equation*}
\begin{split}
    &\int_0^T\Norm{e^{-(T+\tau-\sigma)A}\pi_+\left(h(\xi^{T+\tau}(\sigma))-h(\xi^T(\sigma))\right)} d\sigma
   \\
    &\le {c_1}\tau\kappa(\rho) e^{-(T+\tau)\mu}
     \int_0^T e^{\sigma\mu}\, d\sigma
   \\
    &\le {c_1}\tau e^{-\tau\mu} \frac{\kappa(\rho)}{\mu} \left(1-e^{-T\mu}\right)
     \le\frac{{c_1}}{8}\tau
\end{split}
\end{equation*}
where we used~(\ref{eq:xi-xi}) and smallness~(\ref{eq:rho-backward}) of $\rho$.
\\
III)~Term three requires similar techniques as term one and we obtain that
\begin{equation*}
\begin{split}
   &\int_0^T\Norm{\left(e^{-\tau A}-\1\right) e^{-(T-\sigma)A}\pi_+ h(\xi^T(\sigma))}\, d\sigma
   \\
    &\le\int_0^T\int_0^\tau\Norm{A^+ e^{-(T+s-\sigma)A^+}\pi_+ h(\xi^T(\sigma))}\, ds\, d\sigma
   \\
    &\le\lambda_n\kappa(\rho)\int_0^T\int_0^\tau e^{-(T+s-\sigma)\mu}\, ds\, d\sigma
   \\
    &=\lambda_n\kappa(\rho)\cdot\frac{1-e^{-\tau\mu}}{\mu}\cdot\frac{1-e^{-T\mu}}{\mu}
     \le\frac{\lambda_n}{8}\tau
\end{split}
\end{equation*}
where we used once more that $\norm{\xi^T(\sigma)}_X\le 1$ by
Step~1. We also applied~(\ref{eq:1-e}).
\\
IV)~For term four we get the estimate
\begin{equation*}
     \int_T^{T+\tau}\Norm{ e^{-(T+\tau-\sigma)A}\pi_+h(\xi^{T+\tau}(\sigma))} d\sigma
     \le\kappa(\rho)\frac{1-e^{-\tau\mu}}{\mu}
     \le \tau
\end{equation*}
where we used once more the estimates $\kappa(\rho)\le1$ and~(\ref{eq:1-e}).

To summarize, the above estimates show that
\begin{equation}\label{eq:xi-xi-T-tau}
     \Norm{\xi^{T+\tau}(T+\tau)-\xi^T(T)}
     \le\tau \left(c_1+2\lambda_n+1\right)
\end{equation}
for every $\tau\ge0$. This concludes the proof of~(\ref{eq:V-V})
and therefore of Step~4.
\end{proof}

\noindent
{\bf Step 5.} (Exponential $C^0$ convergence)
{\it Given ${z_-}\in S^u_\eps$ and
$z_+\in B^+$, then it holds that
$
     \norm{\Gg^\infty(z_+)
     -\Gg_{z_-}^T(z_+)}
     \le e^{-T\frac{\lambda}{8}}
$
for every $T\ge T_2$.
}

\begin{proof}
Assumption $T\ge T_2$ will be used
in~(\ref{eq:back-traj}).
Fix $z_+\in B^+$ and ${z_-}\in S^u_\eps$ and
consider the fixed point $\xi^T=\xi^T_{{z_-},z_+}$ of
$\Psi_{{z_-},z_+}^T$ on $Z^T$ and the fixed point
$\xi=\xi_{z_+}$ of $\Psi_{z_+}$ on $Z$; cf. proof
of Theorem~\ref{thm:loc-st-mf-thm}.
Since $\Gg^T_{z_-}(z_+)=\xi^T(0)$
by~(\ref{eq:G=xi}) and $\Gg^\infty(z_+)=\xi(0)$ it
remains to estimate the difference $\xi(0)-\xi^T(0)$.
Motivated by~\cite[Fig.~6]{weber:2014a}
the key idea is to suitably decompose the interval $[0,T]$.
The decomposition is illustrated by~\cite[Fig.~5]{weber:2014a}
which is surrounded by ample explication.
Set
$$
     \Norm{\xi-\xi^T}_{C^0_{T/2}}
     :=\sup_{t\in[0,T/2]}\Norm{\xi(t)-\xi^T(t)}.
$$

Pick $t\in[0,\frac{T}{2}]$, use the representation
formulae for $\xi$ and $\xi^T$, the Lipschitz
Lemma~\ref{le:f}, and add several times zero to
get that
\begin{equation*}
\begin{split}
    &\Norm{\xi(t)-\xi^T(t)}
   \\
     &\le
      \int_0^t e^{-(t-\sigma)\mu}
      \Norm{h\circ\xi(\sigma)-h\circ \xi^T(\sigma)}
      \, d\sigma
   \\
     &\quad
      +
      \left(\int_t^{\frac{T}{2}}
      +\int_{\frac{T}{2}}^{\frac{3T}{4}}
      +\int_{\frac{3T}{4}}^T\right)
      e^{(t-\sigma)\mu}
      \Norm{h\circ\xi(\sigma)-h\circ\xi^T(\sigma)}
      \, d\sigma
   \\
     &\quad
      +e^{(t-T)\mu}\Norm{{z_-}}
      +\int_T^\infty e^{(t-\sigma)A^-}
      \Norm{h\circ\xi(\sigma)}
      \, d\sigma
    \\
    &\le   e^{(t-T)\mu}
     +\kappa(\rho)\Norm{\xi-\xi^T}_{C^0_{T/2}}
     \left(
     \int_0^t e^{-(t-\sigma)\mu}\, d\sigma
     +\int_t^{\frac{T}{2}} e^{(t-\sigma)\mu}\, d\sigma
     \right)
   \\
     &\quad
      +\kappa(\rho)
      \int_{\frac{T}{2}}^{\frac{3T}{4}} e^{(t-\sigma)\mu}
      \left(
      \Norm{\xi^T(\sigma)-\phi_\sigma {z_-^T}}
      +\Norm{\phi_\sigma {z_-^T}}
      \right)
      d\sigma
   \\
     &\quad
      +\kappa(\rho)\int_{\frac{3T}{4}}^T
      e^{(t-\sigma)\mu}\Norm{\xi^T(\sigma)}
      \, d\sigma
      +
      \kappa(\rho)\Norm{\xi}_{\exp}
      \int_{\frac{T}{2}}^\infty e^{(t-\sigma)\mu}
      e^{-\sigma\lambda}
      \, d\sigma.
\end{split}
\end{equation*}
The domain of integration $\int_{T/2}^\infty$ in the
last line is not a misprint.

To continue the estimate consider the last three lines. Now we explain how to get to the
corresponding three lines in~(\ref{eq:arrive-at}) below. Concerning \emph{line one} note that
$ e^{(t-T)\mu}\le e^{-T\mu/2}$ since $t\in[0,T/2]$.
Now use the choice of $T_2$ and for the two integrals use~(\ref{eq:int-1}).
In \emph{line two} use that $e^{(t-\sigma)\mu}\le1$, that
$
     \norm{\xi^T(\sigma)
     -\phi_\sigma({z_-^T})}
     \le\rho e^{-\sigma\lambda}
$
by definition of $Z^T$, and that
\begin{equation}\label{eq:back-traj}
     \int_{\frac{T}{2}}^{\frac{3T}{4}}
     \bigl\|\phi_{\sigma-T}({z_-})\bigr\|\, d\sigma
     =
     \int_{\frac{T}{8}}^{\frac{3T}{8}}
     \bigl\|\phi_{-t-\frac{T}{8}}({z_-})\bigr\|\, dt
     \le
     \int_{\frac{T}{8}}^{\frac{3T}{8}} e^{-t\lambda}\, dt.
\end{equation}
Here the identity is by the change of variables $t:=-\sigma+\frac{7}{8}T$.
Set $z_-^*:=z_-^{T/8}$. To see the inequality consider the trajectory $\eta(\tau):=\phi_{\tau-T/8}({z_-})
=\phi_\tau(z_-^*)$ defined for $\tau\in(-\infty,0]$. Observe that $\eta(\tau)\to 0$, as $\tau\to-\infty$,
because ${z_-}$ lies in the descending sphere $S^u_\eps=\p W^u_\eps$ by assumption, thus $z_-^*$ lies
in the backward flow invariant set $\phi_{-T/8}W^u_\eps$. Consequently the whole image of $\eta$ is
contained in $\phi_{-T/8}W^u_\eps\subset W^u_\eps\subset D^-\subset E^-$. Note that $\pi_-\eta(0)=z_-^*$
and consider the unique fixed point $\eta_{z_-^*}$ of the contraction $\Phi_{z_-^*}$
on $Z^-$ defined by~(\ref{eq:Phi}). Backward uniqueness then implies that
$\eta=\eta_{z_-^*}$. Thus $\eta\in Z^-$, so $\norm{\eta(\tau)}\le\rho e^{\tau\lambda}
\le e^{\tau\lambda}$ for every $\tau\le0$ and this proves the inequality~(\ref{eq:back-traj}).
To summarize, line two is bounded from above by
\begin{equation*}
      \kappa(\rho)\cdot\rho
      \left(
     \int_{\frac{T}{2}}^{\frac{3T}{4}} e^{-\sigma\lambda}
     \, d\sigma
     +\int_{\frac{T}{8}}^{\frac{3T}{8}} e^{-t\lambda}\, dt
     \right)
     \le
     \kappa(\rho)\cdot\frac{\rho}{\lambda}
     \left(
     e^{-T\frac{\lambda}{2}}
     +e^{-T\frac{\lambda}{8}}
     \right).
\end{equation*}
Concerning \emph{line three} note that $\norm{\xi^T(\sigma)}\le 1$
since the elements of $Z^T$ take values in $B_1$ and that $\norm{\xi}_{\exp}\le\rho
\le 1$ by definition~(\ref{eq:exp norm}) of $Z$.
Now calculate both integrals (in the second one use $e^{(t-\sigma)\mu}\le1$).

Overall we get that
\begin{equation}\label{eq:arrive-at}
\begin{split}
     \Norm{\xi(t)-\xi^T(t)}
     &\le 
      \frac{1}{8} e^{-T\frac{\mu}{4}}
      +
      \kappa(\rho)
      \left(\frac{1}{\lambda}+\frac{1}{\lambda}\right)
      \Norm{\xi-\xi^T}_{C^0_{T/2}}
   \\
      &\quad
      +
      \frac{2\kappa(\rho)}{\lambda}
      e^{-T\frac{\lambda}{8}}
   \\
      &\quad
      +
      \frac{\kappa(\rho)}{\mu}
      e^{-T\frac{\mu}{4}}
      + 
      \frac{\kappa(\rho)}{\lambda}
      e^{-T\frac{\lambda}{2}}
   \\
    &\le
     \frac{1}{8}
     \Norm{\xi-\xi^T}_{C^0_{T/2}}
     +
     \frac{1}{4} e^{-T\frac{\lambda}{8}}
\end{split}
\end{equation}
for every $t\in [0,T/2]$.
Here the last inequality uses the estimate
$\mu=\lambda+\delta\ge\lambda$ and
smallness~(\ref{eq:rho-backward}) of $\rho$.
Take the supremum over $t\in [0,T/2]$
to obtain that
\begin{equation}\label{eq:xi-eta}
     \Norm{\xi-\xi^T}_{C^0_{T/2}}
     \le \frac{2}{7} e^{-T\frac{\lambda}{8}}.
\end{equation}
Since
$
     \norm{\Gg^\infty(z_+)-\Gg^T_{z_-}(z_+)}=\norm{\xi(0)-\xi^T(0)}
     \le\norm{\xi-\xi^T}_{C^0_{T/2}}
$
this proves Step~5.
\end{proof}

\vspace{.1cm}
\noindent
{\bf Step~6.}
{\rm (Exponential $C^1$ convergence)}
{\it Pick ${z_-}\in S^u_\eps$, $z_+\in B^+$, 
$v\in E^+$, then
$$
     \Norm{d\Gg^T_{z_-}(z_+)v-d\Gg^\infty(z_+)v}
     \le c_* e^{-T\frac{\lambda}{8}}\Norm{v}
$$
for some constant $c_*(\kappa_*,1/\delta,1/\lambda)$
and every $T\ge T_0$ whenever $f$ is of class $C^{2,1}$ near $x$.
}

\begin{proof}
One would expect that the proof of convergence 
of the linearized graph maps should use convergence
of the graph maps themselves.
Indeed estimate~(\ref{eq:xi-eta}) above
is a key ingredient.
Fix $T\ge T_0$, ${z_-}\in S^u_\eps$,
$z_+\in B^+$, and $v\in E^+$.
We proceed in three steps. Step~I and Step~II
are preliminary.

{\bf I.}~For $\tau\ge 0$ small consider the unique
fixed
point $\xi^T_\tau:=\xi^T_{{z_-},z_++\tau v}\in Z^T$
of the contraction $\Psi^T_{{z_-},z_++\tau v}$
defined by~(\ref{eq:Psi}). Set $\xi^T:=\xi^T_0
=\xi^T_{{z_-},z_+}$. By~(\ref{eq:Psi})
\begin{equation}\label{eq:xiT-tau}
\begin{split}
     \xi^T_\tau(t)
    &=
     e^{-tA}\left(z_++\tau v\right)
     +\int_0^t e^{-(t-\sigma)A}\pi_+
     h(\xi^T_\tau(\sigma))\, d\sigma\\
    &\quad
     +e^{-(t-T)A^-}{z_-}-\int_t^T e^{-(t-\sigma)A^-}
     \pi_- h(\xi^T_\tau(\sigma))\, d\sigma
\end{split}
\end{equation}
for every $t\in[0,T]$. By the proof of Step~3 the
composition of maps
$$
     \tau
     \mapsto \xi^T_{{z_-},z_++\tau v}
     \mapsto \xi^T_{{z_-},z_++\tau v} (t)
$$
is of class $C^1$. Therefore the linearization
\begin{equation}\label{eq:def-X_v^T}
     X^T_v(t)
     =X^T_{{z_-},z_+;v}(t)
     :=\left.\frac{d}{d\tau}\right|_{\tau=0}
     \xi^T_{{z_-},z_++\tau v}(t)
\end{equation}
is well defined. For every $t\in[0,T]$
it satisfies the integral equation
\begin{equation}\label{eq:X_v^T}
\begin{split}
     X^T_v(t)
     =e^{-tA} v
    &+\int_0^t e^{-(t-\sigma)A}\pi_+
     \left( dh|_{\xi^T(\sigma)}\circ
     X_v^T(\sigma)\right)\, d\sigma\\
    &-\int_t^T e^{-(t-\sigma)A^-}\pi_-
     \left( dh|_{\xi^T(\sigma)}\circ
     X_v^T(\sigma)\right)\, d\sigma
\end{split}
\end{equation}
and therefore by straighforward calculation
the estimate
\begin{equation}\label{eq:X-exp}
     e^{t\lambda} \Norm{X^T_v(t)}
     \le\Norm{X_v^T}_{\exp}
     \le 2\norm{v}
\end{equation}
for every $t\in[0,T]$.
Use~(\ref{eq:G=xi}) to see that
\begin{equation}\label{eq:X_v^T=dG}
     X_v^T(0)
     :=\left.\tfrac{d}{d\tau}\right|_{\tau=0}
     \xi^T_{{z_-},z_++\tau v}(0)
     =d\Gg^T_{z_-}(z_+) v.
\end{equation}

{\bf II.}~Concerning the local stable manifold
observe the
following. For $\tau\ge0$ small consider the
unique fixed point $\xi_{z_++\tau v}$ of
$\Psi_{z_++\tau v}$ on $Z$ defined
by~(\ref{eq:exp norm}). It satisfies the
integral equation
\begin{equation}\label{eq:eta-tau}
\begin{split}
     \xi_{z_++\tau v} (t)
     =e^{-tA}\left(z_++\tau v\right)
    &+\int_0^t e^{-(t-\sigma)A}\pi_+
     h(\xi_{z_++\tau v} (\sigma))\, d\sigma\\
    &-\int_t^\infty e^{-(t-\sigma)A^-}
     \pi_- h(\xi_{z_++\tau v} (\sigma))\, d\sigma
\end{split}
\end{equation}
for every $t\ge 0$. Hence the linearization
$$
     X_v(t)=X_{z_+;v}(t)
     :=\left.\frac{d}{d\tau}\right|_{\tau=0}
     \xi_{z_++\tau v}(t)
$$
satisfies for every $t\ge 0$ the integral equation
\begin{equation}\label{eq:Y_v}
\begin{split}
     X_v(t)
     =e^{-tA} v
    &+\int_0^t e^{-(t-\sigma)A}\pi_+
     \left( dh|_{\xi_{z_+}(\sigma)}\circ
     X_v(\sigma)\right)\, d\sigma\\
    &-\int_t^\infty e^{-(t-\sigma)A^-}
     \left(\pi_- dh|_{\xi_{z_+}(\sigma)}\circ
     X_v(\sigma)\right)\, d\sigma
\end{split}
\end{equation}
and therefore by straightforward calculation
the estimate
\begin{equation}\label{eq:Y-exp}
     e^{t\lambda} \Norm{X_v(t)}
     \le\Norm{X_v}_{\exp}
     \le 2\norm{v}
\end{equation}
for every $t\ge 0$. Since $\Gg^\infty(z_+)=\xi_{z_+}(0)$ we get that
\begin{equation}\label{eq:Y_v=dG}
     X_v(0)
     :=\left.\tfrac{d}{d\tau}\right|_{\tau=0}
     \xi_{z_++\tau v}(0)
     =d\Gg^\infty(z_+) v.
\end{equation}

{\bf III.}~Abbreviate $\xi^T:=\xi^T_{{z_-},z_+}$ and
$\xi:=\xi_{z_+}$. To estimate the difference
$X^T_v-X_v$ consider the corresponding integral
equations~(\ref{eq:X_v^T}) and~(\ref{eq:Y_v}).
Add zero and apply the Lipschitz Lemma~\ref{le:f}
for $dh$ to obtain that

\begin{equation}\label{eq:df-I-new}
\begin{split}
    &\Norm{dh|_{\xi^T(\sigma)} X_v^T(\sigma)
     -dh|_{\xi(\sigma)} X_v(\sigma)}
   \\
    &=\Norm{\left(dh|_{\xi^T(\sigma)}
     -dh|_{\xi(\sigma)}\right) X_v^T(\sigma)
     +
     dh|_{\xi(\sigma)}\left(X_v^T(\sigma)
     -X_v(\sigma)\right)}
   \\
    &\le
     \kappa_*\Norm{\xi^T(\sigma)-\xi(\sigma)}
     \cdot\Norm{X_v^T(\sigma)}
     +
     \kappa(\rho)\Norm{X_v^T(\sigma)-X_v(\sigma)}
\end{split}
\end{equation}
for every $\sigma\in[0,T]$. Here we applied estimate~(\ref{cor:f}) to get that
\begin{equation}\label{eq:df-II-new}
     \Norm{dh|_{\xi(\sigma)}\circ X_v(\sigma)}
     \le
     \kappa(\rho)\Norm{X_v(\sigma)}.
\end{equation}
Pick $t\in[0,\frac{T}{2}]$ and apply estimates~(\ref{eq:xi-eta}),~(\ref{eq:X-exp}),
and~(\ref{eq:Y-exp}) to obtain
\begin{equation*}
\begin{split}
    &
     \Norm{X_v^T(t)-X_v(t)}
   \\
    &\le
     \int_0^t e^{-(t-\sigma)\mu}\Bigl(\kappa_*
     \underbrace{
       \Norm{\xi^T(\sigma)-\xi(\sigma)}
     }_{\le\frac{2}{7} e^{-T\lambda/8}}
     \underbrace{
       \Norm{X_v^T(\sigma)}
     }_{\le 2 e^{-\sigma\lambda}\norm{v}}
     +\kappa(\rho)
       \Norm{X_v^T(\sigma)-X_v(\sigma)}
     \Bigr) d\sigma
   \\
    &\quad
     +\int_t^{{\frac{T}{2}}} e^{(t-\sigma)\mu}
     \Bigl(\kappa_*\Norm{\xi^T(\sigma)
     -\xi(\sigma)}\Norm{X_v^T(\sigma)}
     +\kappa(\rho)\Norm{X_v^T(\sigma)-X_v(\sigma)}
     \Bigr) d\sigma
   \\
    &\quad
     +\kappa(\rho)\int_{\frac{T}{2}}^T
     e^{(t-\sigma)\mu}\Norm{X_v^T(\sigma)} d\sigma
     +\kappa(\rho)\int_{\frac{T}{2}}^\infty
     e^{(t-\sigma)\mu}
     \underbrace{
       \Norm{X_v(\sigma)}
     }_{\le 2 e^{-\sigma\lambda}\norm{v}}
     d\sigma
   \\
    &\le
     \frac{4}{7}\kappa_* e^{-T\frac{\lambda}{8}}\Norm{v}
     \left(
     \int_0^t e^{-(t-\sigma)\mu} e^{-\sigma\lambda}
     \, d\sigma
     +
     \int_t^{\frac{T}{2}} e^{(t-\sigma)\mu} e^{-\sigma\lambda}
     \, d\sigma
     \right)
   \\
    &\quad
     +\kappa(\rho)\Norm{X_v^T-Y_v}_{C^0_{T/2}}
     \left(
     \int_0^t e^{-(t-\sigma)\mu}\, d\sigma
     +
     \int_t^{\frac{T}{2}} e^{(t-\sigma)\mu}\, d\sigma
     \right)
   \\
    &\quad
     +4\kappa(\rho)\Norm{v}
     \int_{{\frac{T}{2}}}^\infty
     e^{(t-\sigma)\mu} e^{-\sigma\lambda}
     \, d\sigma
   \\
    &\le 
     \left(\kappa_*
     \left(\frac{e^{-t\lambda}}{\delta}
     +\frac{e^{-t\lambda}}{\lambda}\right)
     +\frac{4\kappa(\rho)}{\lambda}\right)
     \Norm{v} e^{-T\frac{\lambda}{8}}
     +\kappa(\rho)\left(\frac{1}{\lambda}
     +\frac{1}{\lambda}\right)
     \Norm{X_v^T-X_v}_{C^0_{T/2}}
   \\
    &\le
     \frac{c_*}{2}
     \Norm{v} e^{-T\frac{\lambda}{8}}
     +\frac{1}{16}
     \Norm{X_v^T-X_v}_{C^0_{T/2}}
\end{split}
\end{equation*}
where $c_*:=2\kappa_*(\frac{1}{\delta}+\frac{1}{\lambda})+1/4$.
In inequality three we calculated the integrals
and estimated $\frac{1}{\mu}\le \frac{1}{\lambda}$.
The final inequality is by
smallness~(\ref{eq:rho-backward}) of $\rho$.
Now take the supremum over $t\in[0,T/2]$
and use the resulting estimate to continue
\begin{equation*}
\begin{split}
     \Norm{d\Gg^T_{z_-}(z_+)v-d\Gg^\infty(z_+)v}
   &
     =\Norm{X_v^T(0)-X_v(0)}
   \\
   &
     \le\Norm{X_v^T-X_v}_{C^0_{T/2}}.
\end{split}
\end{equation*}
This concludes the proof of Step~6 and Theorem~\ref{thm:backward-lambda-Lemma}.
\end{proof}

\section{Invariant stable foliations and induced flow}\label{sec:inv-fol}
Given $x\in\Crit f$ non-degenerate, set $c:=f(x)$.
Consider the local model on $V=W^u_{2\varsigma}\times W^s_{2\varsigma}\subset T_xM$ provided
by~(H3) in Definition~\ref{def:loc-coord-lambda} and recall that the assertions of the backward
$\lambda$-Lemma, Theorem~\ref{thm:backward-lambda-Lemma}, hold true
for each $\eps\in(0,\varsigma)$.

\subsection*{Invariant stable foliations}
\begin{theorem}[Invariant stable foliation]\label{thm:inv-fol}
Given $x\in\Crit f$ non-degenerate, set $k=\IND(x)$ and $c=f(x)$. Then
for every sufficiently small $\eps>0$ the following is true. Pick a tubular neighborhood
$\Dd(x)$ (associated to a radius $\varkappa$ normal disk bundle) over the descending
sphere $S^u_\eps(x)$ in the level hypersurface $\{f=c-\eps\}$. Denote the fiber
over $q\in S^u_\eps(x)$ by $\Dd_q(x)$; see Figure~\ref{fig:fig-Conley-pair}.
Then the following holds for every sufficiently large $\tau>1$.
\begin{description}
\item[(Foliation)]
  The subset $N_x^s$ of $M$ is compact and contains no critical points 
  except $x$. Moreover, it carries the structure of a
  continuous codimension $k$ foliation\footnote{
    For the precise degrees of smoothness away from $W^s_\eps(x)$ see
    Theorem~\ref{thm:backward-lambda-Lemma}.
    }
  whose leaves are parametrized by the $k$ disk $\varphi_{-\tau} W^u_\eps(x)=N^s_x\cap W^u(x)$.
  The leaf $N_x^s(x)$ over $x$ is the
  ascending disk $W^s_\eps(x)$. The other leaves are the codimension $k$ disks given by
  \begin{equation*}
     N_x^s(q^T)
     =\left({\varphi_T}^{-1}\Dd_q (x)\cap
     \{f\le c+\eps\}\right)_{q^T}
     ,\qquad
     q^T:=\varphi_{-T}q,
  \end{equation*}
  whenever $T\ge\tau$ and $q\in S^u_\eps(x)$.
\item[(Invariance)]
  Leaves and semi-flow are compatible in the sense that
  \begin{equation*}
     p\in N_x^s(q^T)
     \quad\Rightarrow\quad
     \varphi_\sigma p\in N_x^s(\varphi_\sigma q^T)
     \quad \forall \sigma\in[0,T-\tau).
  \end{equation*}
\item[(Contraction onto ascending disk)]
  The leaves converge uniformly to the ascending disk in the sense that
  \begin{equation}\label{eq:unif-exp-dist}
     \dist\left(N_x^s(q^T),W^s_\eps(x)\right)
     \le e^{-T\frac{\lambda}{8}}
  \end{equation}
  for all $T\ge\tau$ and $q\in S^u_\eps(x)$.\footnote{
    Here $\dist$ denotes the infimum over all Riemannian distances of any two points
    and the constant $\lambda\in(0,d)$ has been fixed in~(H2).
    }
  If $U$ is a neighborhood of $W^s_\eps(x)$ in $M$, then
  $N_x^s(\eps,\tau_*)\subset U$ for some constant $\tau_*$.
\item[(Shrink to critical point)]
  Assume $U$ is a neighborhood of $x$ in $M$. Then there are constants $\eps_*$ and
  $\tau_*$ such that $N_x^s(\eps_*,\tau_*)\subset U$.
\end{description}
\end{theorem}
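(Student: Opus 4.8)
The plan is to deduce everything from the backward $\lambda$-Lemma, Theorem~\ref{thm:backward-lambda-Lemma}, applied to the local model on $V=W^u_{2\varsigma}\times W^s_{2\varsigma}$, together with the level-set description of $N_x^s$ and $L_x^s$ in~(\ref{eq:Conley-set-NEW})--(\ref{eq:L_x}). First I would fix the geometric setup: using Corollary~\ref{cor:desc-disk} and Remark~\ref{rem:asc-disk}, choose $\eps>0$ small enough that the descending sphere $S^u_\eps(x)$ sits inside the region where the local model of~(H1--H4) is valid, and pick the normal-disk radius $\varkappa\in(0,1]$ of the tubular neighborhood $\Dd(x)$ as in~(H4), so that after the flattening of Remark~\ref{rem:flat-inv-mfs} we may identify $\Dd(x)$ with $\Dd=S^u_\eps\times B^+_\varkappa$. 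By Remark~\ref{rem:general-D} a general transverse hypersurface can first be put into this normal form by a $C^r$ diffeomorphism supported near the compact locus $S^u_\eps$, so there is no loss of generality. Then, for the large parameter $\tau>1$, I would use $T_0$ from Theorem~\ref{thm:backward-lambda-Lemma} and set $\tau\ge T_0$, so that for every $q\in S^u_\eps(x)$ and $T\ge\tau$ the preimage ${\phi_T}^{-1}\Dd_q\cap(E^-\times B^+)$ is exactly the graph $\Gg^T_{z_-}(B^+)$ with $z_-=q$ — a $C^r$-embedded codimension-$k$ disk through $\phi_{-T}(q)=q^T$.

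Next I would establish the (Foliation) statement. Compactness of $N_x^s$ and absence of critical points other than $x$ is the standard Conley-pair argument from~\cite{salamon:1990a}: $N_x^s$ is a closed subset of the compact sublevel set $\{f\le c+\eps\}$ (using exhaustiveness / completeness of the forward flow), and a second critical point in $N_x^s$ would have to lie on a gradient trajectory that both stays below $c+\eps$ and rises back above $c-\eps$ after time $\tau$, which forces it to be $x$ by the usual energy/strict-decrease argument along gradient lines. For the leaf decomposition, the key point is that the time labelling used in Figure~\ref{fig:fig-Conley-pair} — assigning to $p\in N_x^s$ the time at which $\varphi_\cdot p$ meets $\{f=c-\eps\}$, and the label $\infty$ to points of $W^s_\eps(x)$ — partitions $N_x^s\setminus W^s_\eps(x)$ into the level sets $\{p:\varphi_T p\in\Dd\}$, each of which by the backward $\lambda$-Lemma is the disk ${\varphi_T}^{-1}\Dd_q(x)\cap\{f\le c+\eps\}$ over its unique intersection point $q^T$ with $W^u(x)$; that the base $\varphi_{-\tau}W^u_\eps(x)$ is indeed the full transversal $N^s_x\cap W^u(x)$ follows because a point on the unstable manifold lies in $N_x^s$ exactly when its forward trajectory still meets $\{f=c-\eps\}$, i.e. when it lies in $\varphi_{-T}W^u_\eps(x)$ for some $T\ge\tau$. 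Continuity of the foliation (and the higher smoothness away from $W^s_\eps(x)$) is precisely the $C^r$-dependence of $\Gg$ on $(T,z_-,z_+)$ and its Lipschitz dependence on $T$ from Steps~3 and~4 of Theorem~\ref{thm:backward-lambda-Lemma}, after reparametrising $T$ by the $C^1$ change of variables $p\mapsto T(p)$ given by the implicit function theorem applied to $f(\varphi_T p)=c-\eps$.

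The (Invariance) clause is almost formal: if $p\in N_x^s(q^T)$, i.e. $\varphi_T p\in\Dd_q(x)$, then for $\sigma\in[0,T-\tau)$ one has $\varphi_{T-\sigma}(\varphi_\sigma p)=\varphi_T p\in\Dd_q(x)$ with $T-\sigma>\tau\ge T_0$, so $\varphi_\sigma p$ lies in the leaf over $\varphi_{-(T-\sigma)}q=\varphi_\sigma q^T$; one only has to check that $\varphi_\sigma p$ still satisfies $f\le c+\eps$, which holds since $f$ decreases along the flow. The (Contraction onto ascending disk) estimate~(\ref{eq:unif-exp-dist}) is exactly the $C^0$-convergence~(\ref{eq:lin-lam}) of Step~5 transported back to $M$ via the (uniformly bi-Lipschitz, since defined on the compact set $V$) flattening diffeomorphism $\vartheta$ and the exponential chart, which changes the constant in front of $e^{-T\lambda/8}$ only by a bounded factor that can be absorbed by enlarging $\tau$; the ``$N_x^s(\eps,\tau_*)\subset U$ for a neighbourhood $U$ of $W^s_\eps(x)$'' assertion is then immediate by making $\tau_*$ large. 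Finally, (Shrink to critical point): given a neighbourhood $U$ of $x$, first shrink $\eps$ so that the ascending disk $W^s_{\eps_*}(x)$ — which shrinks to $\{x\}$ as $\eps\to0$ since $W^s_\eps(x)=W^s(x)\cap\{f\le c+\eps\}$ and gradient trajectories in $W^s$ converge to $x$ — lies inside a smaller neighbourhood $U'\Subset U$; then apply the previous bullet with $W^s_{\eps_*}(x)$ in place of $W^s_\eps(x)$ to get $\tau_*$ with $N_x^s(\eps_*,\tau_*)$ contained in the $e^{-\tau_*\lambda/8}$-neighbourhood of $W^s_{\eps_*}(x)$, hence in $U$.

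The main obstacle I expect is bookkeeping rather than a genuine difficulty: carefully matching the three coordinate systems — the ambient manifold $M$, the exponential chart $B_{\rho_0}\subset T_xM$, and the flattened model $V$ of Remark~\ref{rem:flat-inv-mfs} — so that the intrinsic, level-set definition of $N_x^s$ in~(\ref{eq:Conley-set-NEW}) really does match, leaf for leaf, the family of graphs $\Gg^T_{z_-}(B^+)$ produced by Theorem~\ref{thm:backward-lambda-Lemma}, and verifying that the time-relabelling $T\mapsto$ (exit time to $\{f=c-\eps\}$) is a diffeomorphism with the regularity claimed; the quantitative estimates themselves are all already in hand from Steps~4--6.
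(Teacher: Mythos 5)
Your framework matches the paper's: use the backward $\lambda$-Lemma to produce, for each pair $(T,q)$ with $T\ge\tau$ and $q\in S^u_\eps(x)$, the graph $\Gg^T_q(B^+)$ as the local model of the leaf $N^s_x(q^T)$, then verify disjointness, flow-compatibility, and $C^0$-decay. The paper packages disjointness and flow-compatibility in Corollary~\ref{cor:inv-fol}, whose engine is the \emph{no-return property} of $\Dd\subset\{f=c-\eps\}$ (Definition~\ref{def:no-return}), and then refers to the infinite-dimensional version~\cite[Thm.~C]{weber:2014c} for the remaining assembly. Your exit-time labelling is the same mechanism, but your phrasing is off: the time-level set $\{p:\varphi_T p\in\Dd\}$ is \emph{not} a single codimension-$k$ disk -- it is the union over $q\in S^u_\eps(x)$ of the fibers ${\varphi_T}^{-1}\Dd_q(x)$ -- and the leaves are indexed by the two-parameter family $(T,q)\mapsto q^T$, i.e.\ by the transversal $\varphi_{-\tau}W^u_\eps(x)$. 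You should also make explicit that two graphs $\Gg^T_\alpha(B^+)$, $\Gg^S_\beta(B^+)$ are disjoint: since both project via $\pi_+$ to $B^+$, an intersection point forces equal $z_+$-components, hence one and the same forward trajectory hits $\Dd$ at times $T$ and $S$, contradicting the no-return property if $T\ne S$. Your implicit-function-theorem remark about the exit-time map $p\mapsto T(p)$ is a correct and useful supplement.

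Two concrete steps need repair. For (Contraction onto ascending disk), transporting the local estimate~(\ref{eq:xi-eta}) back to $M$ through $\exp_x$ and $\vartheta^{-1}$ multiplies $\frac{2}{7}\,e^{-T\lambda/8}$ by the Lipschitz constant $L$ of the coordinate change; your claim that this ``bounded factor \ldots can be absorbed by enlarging $\tau$'' is false, since $L\,e^{-T\lambda/8}\le e^{-T\lambda/8}$ requires $L\le 1$ for \emph{every} $T$, so no choice of $\tau$ helps. What saves the stated bound is that $\exp_x$ and $\vartheta$ both have derivative $\1$ at the origin of $T_xM$, so shrinking the chart radius keeps $L$ close enough to $1$ that $\frac{2}{7}L\le 1$. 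For compactness of $N^s_x$, you invoke compactness of the sublevel set $\{f\le c+\eps\}$, but the paper only assumes that the forward flow is complete -- exhaustiveness of $f$ is mentioned merely as a sufficient condition, not a hypothesis. You should instead argue that $N^s_x$ is a closed subset of the union of compact leaves over the compact transversal $\varphi_{-\tau}W^u_\eps(x)\cup\{x\}$, which stays inside a fixed compact chart by the uniform decay~(\ref{eq:unif-exp-dist}) once $\tau$ is large.
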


Theorem~\ref{thm:inv-fol} is illustrated by Figure~\ref{fig:fig-N}.
Theorem~\ref{thm:backward-lambda-Lemma} asserts that some
neighborhood of $x$ is the union of images of $(n-k)$-disks.
The first step (Corollary~\ref{cor:inv-fol}) is to show that these
images are disjoint and compatible with the flow.
The actual proof of Theorem~\ref{thm:inv-fol} coincides word by
word with the infinite dimensional version~\cite[Thm.~C]{weber:2014c}.
To see this coincidence set $R=\frac{\rho}{4}$
and impose on $\eps>0$ the condition $W^s_{\eps}\subset B^+_R$.
Note that $B^+_{2R}=B^+\subset D^+$.

\subsubsection*{Local non-intrinsic foliation}\label{sec:local-fol}
\begin{definition}\label{def:no-return}
A set $\Dd$ has the {\bf no return property}
with respect to a forward flow $\phi$ if $\Dd\cap{\phi_t}^{-1}\Dd=\emptyset$, equivalently
$\phi_t \Dd\cap \Dd=\emptyset$, whenever $t>0$.
\end{definition}

Clearly any subset of a \emph{level} set has the no return property with respect to a \emph{gradient} semi-flow.
Recall that $\Gg^\infty$ denotes the stable manifold graph map provided by
Theorem~\ref{thm:loc-st-mf-thm} and $\Gg$ is the graph map which
appears in the backward $\lambda$-Lemma, Theorem~\ref{thm:backward-lambda-Lemma}.

\begin{corollary}[to the Backward $\lambda$-Lemma]\label{cor:inv-fol}
Under the assumptions of Theorem~\ref{thm:backward-lambda-Lemma}
and with the additional assumption that $(D,\phi)$ has the no return property the
following is true. The subset
$$
     F=F^{\eps,T_0}
     :=\left(\im\Gg\cup\im\Gg^\infty\right)
     \subset D^-\times B^+\subset V
$$
of $X=T_x M$ carries the structure of a
continuous foliation of codimension $k=\IND(x)$.\footnote{
  For the definition of foliation see e.g.~\cite{lawson:1974a}
  or~\cite[Sec.~4.1]{pesin:2004a} and
  for the precise degrees of smoothness away from $W^s_\eps(x)$ see
  Theorem~\ref{thm:backward-lambda-Lemma}.
  }
The leaf over the singularity $0$ is given by the subset
$F(0):=\Gg^\infty(B^+)$ of the ascending disk $W^s_{2\varsigma}=D^+$
and the leaf over the points
$$
     \alpha^T:=\phi_{-T}{\alpha}=\Gg^T_{\alpha}(0),\quad
     T\ge T_0,\quad {\alpha}\in S^u_\eps,
$$
are given by the graphs $F({\alpha^T}):=\Gg^T_{\alpha}(B^+)$.
Leaves and flow are compatible in the sense that
$$
     z\in F({\alpha^T})
     \quad\Rightarrow\quad
     \phi_\sigma z\in F(\phi_\sigma{\alpha^T})
$$
whenever the flow trajectory from $z$ to 
$\phi_\sigma z$ remains inside $F$.
\end{corollary}

\begin{proof}[Proof of Corollary~\ref{cor:inv-fol}]
By Theorem~\ref{thm:backward-lambda-Lemma}
it remains to check that the sets $F({\alpha^T})$
and $F(\beta^S)$ are disjoint whenever
${\alpha^T}\not=\beta^S$, that is
$(T,{\alpha})\not= (S,\beta)$.
Assume by contradiction that 
$\Gg^T_{\alpha}(z_+)=\Gg^S_\beta(z_+)=:z$ for some
$z_+\in B^+$. Then by~(\ref{eq:G=xi}) the
point $z$ is the initial value of a flow
trajectory $\xi^T$ ending at time $T$ on the fiber
$\Dd_{\alpha}$ and also of a flow trajectory $\xi^S$
ending at time $S$ on $\Dd_{\beta}$.
By uniqueness of the solution to the Cauchy 
problem with initial value $z$
the two trajectories coincide until time $\min\{T,S\}$.
If $T=S$, then ${\alpha}=\beta$ and we are done.
Now assume without loss of generality that $T<S$, 
otherwise rename. Hence $\xi^S$ meets $\Dd_{\alpha}$
at time $T$ and $\Dd_\beta$ at the later time $S$.
But this contradicts the no return property of $D$.

We prove compatibility of leaves and flow. The
fixed point $0$ is flow invariant. Its
neighborhood $F(0)$ in the ascending disk is
trivially flow invariant in the required sense,
namely up to leaving $F(0)$.
Pick $z\in F({\alpha^T}):=\Gg^T_{\alpha}(B^+)$.
By~(\ref{eq:G=xi}) the point $z$ is the initial value
of a flow trajectory $\xi^T$ ending at time $T$ on
the fiber $\Dd_{\alpha}$.
Assume the image $\xi^T([0,T])=\phi_{[0,T]} z$ is
contained in $F:=\im\Gg\cup\im\Gg^\infty$ and pick
$\sigma\in[0,T-T_0]$. This implies that
$z_+:=\pi_+\phi_\sigma z\in B^+$. The
flow line $\phi_{[0,T-\sigma]}\phi_\sigma z$ runs
from $\phi_\sigma z$ to $\phi_T z\in \Dd_{\alpha}$.
Thus by uniqueness this flow line coincides with the
fixed point $\xi_{{\alpha},z_+}^{T-\sigma}$ of the strict
contraction $\Psi_{{\alpha},z_+}^{T-\sigma}$. But
$\phi_\sigma z=\xi_{{\alpha},z_+}^{T-\sigma}(0)$ is equal
to $\Gg^{T-\sigma}_{\alpha}(z_+)$ again
by~(\ref{eq:G=xi}). Thus
$$
     \phi_\sigma z\in
     \Gg^{T-\sigma}_{\alpha}(B^+)
     =:F({\alpha}^{T-\sigma})
     =F(\phi_\sigma{\alpha^T})
$$
by definition of $F$ and ${\alpha}^{T-\sigma}$.
\end{proof}

\subsection*{Induced flow -- Dynamical thickening}

\begin{theorem}[Strong deformation retract]\label{thm:deformation-retract}
Consider a pair of spaces $(N^s_x,L^s_x)$ as defined
by~(\ref{eq:Conley-set-NEW}--\ref{eq:L_x}).
Then the following is true for all pair parameters $\eps>0$ sufficiently small
and $\tau>1$ sufficiently large. Firstly, the pair strongly
deformation retracts onto its part in the unstable manifold, that is onto
$$
     (D,A)=(D^s_x,A^s_x):=(N^s_x\cap W^u(x),L^s_x\cap W^u(x)) .
$$
Moreover, this pair consists of the closed disk $D=\varphi_{-\tau} W^u_\eps(x)$
whose dimension $k$ is the Morse index of $x$ and an annulus $A$ which arises
by removing from $D$ the smaller open disk $\INT \varphi_{-2\tau} W^u_\eps(x)$;
see Figure~\ref{fig:fig-Conley-pair}.
\end{theorem}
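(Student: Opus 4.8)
The plan is to read off the pair $(D,A)$ explicitly, then assemble a strong deformation retraction leafwise from the invariant foliation of Theorem~\ref{thm:inv-fol} and the quantitative $C^0$-control of the graph maps in Theorem~\ref{thm:backward-lambda-Lemma}.

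First I would identify $(D,A)$ directly from the definitions~(\ref{eq:Conley-set-NEW}--\ref{eq:L_x}), using that $f$ is \emph{strictly} decreasing along every non-constant gradient half-trajectory and that on $W^u(x)$ one has $f\le c$, with equality only at $x$. For $p\in W^u(x)$ one gets $p\in N^s_x$ iff $f(\varphi_\tau p)\ge c-\eps$, i.e. iff $\varphi_\tau p\in W^u_\eps(x)$; hence $D=N^s_x\cap W^u(x)=\varphi_{-\tau}W^u_\eps(x)$, where $\varphi_{-\tau}$ is the algebraic backward flow of Definition~\ref{def:alg-back-flow}. Similarly $p\in L^s_x\cap W^u(x)$ iff, in addition, $\varphi_{2\tau}p\notin\INT W^u_\eps(x)$, so $A=\varphi_{-\tau}W^u_\eps(x)\setminus\INT\varphi_{-2\tau}W^u_\eps(x)$. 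Since $\varphi_{\pm t}$ restricts to a diffeomorphism of $W^u(x)$, since $W^u_\eps(x)$ is a closed $k$-disk by Corollary~\ref{cor:desc-disk} (with $k=\IND(x)=\dim W^u(x)$ by Theorem~\ref{thm:unst-mf-thm}), and since strict decay of $f$ pushes $\varphi_{-\tau}W^u_\eps(x)$ into $\INT W^u_\eps(x)$ and hence $\varphi_{-2\tau}W^u_\eps(x)$ into $\INT D$, the pair $(D,A)$ has exactly the asserted shape.

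Next I would show that $L^s_x$ is saturated by leaves. By Theorem~\ref{thm:inv-fol} the set $N^s_x$ is the disjoint union of the central leaf $W^s_\eps(x)$ and the leaves $N^s_x(q^T)$, $q\in S^u_\eps(x)$, $T\ge\tau$, with base points $x$ resp. $q^T=\varphi_{-T}q$; write $\pi\colon N^s_x\to D$ for the continuous leaf projection belonging to this foliation structure. For $p\in N^s_x(q^T)$ we have $\varphi_Tp\in\Dd_q\subset\{f=c-\eps\}$ and $f$ is strictly decreasing near time $T$ because the region contains no critical point other than $x\notin\Dd_q$; hence $f(\varphi_{2\tau}p)<c-\eps$, $=c-\eps$, $>c-\eps$ according to $T<2\tau$, $T=2\tau$, $T>2\tau$, \emph{independently of $p$}. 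Thus a leaf $N^s_x(q^T)$ lies entirely in $L^s_x$ iff $T\le 2\tau$, while on the central leaf $f(\varphi_{2\tau}p)\ge c>c-\eps$ so it is disjoint from $L^s_x$; matching this with the computation $q^T\in A\iff\tau\le T\le 2\tau$ from the first step yields $L^s_x=\pi^{-1}(A)$.

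Finally I would build the retraction. By Corollary~\ref{cor:inv-fol} together with Theorem~\ref{thm:backward-lambda-Lemma}, in the local model of~(H3) each leaf is a graph over the convex ball $B^+\subset E^+$, namely $N^s_x(q^T)=\Gg^T_q(B^+)$ with $\Gg^T_q(0)=q^T$ and $W^s_\eps(x)=\Gg^\infty(B^+)$ with $\Gg^\infty(0)=0=x$. Define $H\colon N^s_x\times[0,1]\to N^s_x$ leafwise by
\[
     H\bigl(\Gg^T_q(z_+),s\bigr):=\Gg^T_q\bigl((1-s)z_+\bigr),\qquad
     H\bigl(\Gg^\infty(z_+),s\bigr):=\Gg^\infty\bigl((1-s)z_+\bigr).
\]
Then $H_0=\id$, $H_1=\pi$ takes values in $D$, and $H_s$ fixes $D$ pointwise (its points are precisely those with $z_+=0$), so $H$ is a \emph{strong} deformation retraction of $N^s_x$ onto $D$; and since $H_s$ preserves every leaf while $L^s_x=\pi^{-1}(A)$ with $A\subset D$ fixed, $H_s(L^s_x)\subseteq L^s_x$. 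The step I expect to be the main obstacle is continuity of $H$: on each stratum $T\in[\tau,\infty)$ joint continuity in $(q,z_+,s)$ is the $C^r$-dependence and Lipschitz dependence on $T$ from Theorem~\ref{thm:backward-lambda-Lemma}, but continuity up to the central leaf (the ``$T=\infty$'' stratum) requires more than set-theoretic leaf invariance — it is exactly the uniform estimate $\Norm{\Gg^T_q(z_+)-\Gg^\infty(z_+)}\le e^{-T\lambda/8}$ of~(\ref{eq:lin-lam}), by which leaves $N^s_x(q_n^{T_n})$ with $T_n\to\infty$ (and their radial homotopies) converge to the central leaf (and its radial homotopy). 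Once that check is carried out, the four required properties of $H$ are immediate.
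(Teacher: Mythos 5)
Your route differs from the paper's in one interesting way: the paper does not contract radially in the fiber coordinate but instead conjugates the actual gradient semi-flow on the central leaf to every other leaf via the graph maps, i.e.\ it uses the ``induced semi-flow'' $\theta_t z=\Gg^T_\alpha\circ\phi_t\circ\pi_+(z)$ (Definition~\ref{def:induced-semi-flow}) and sends $t\to\infty$. Both homotopies fix $D$ pointwise and are leafwise in the same sense, and your observation that $L^s_x$ is exactly the union of the leaves with $\tau\le T\le2\tau$ (so $L^s_x=\pi^{-1}(A)$) is correct and useful. Your identification of $(D,A)$ and the continuity discussion via the uniform estimate~(\ref{eq:lin-lam}) are also fine.

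However, there is a genuine gap at the point you dismiss as immediate. You identify the leaf $N^s_x(q^T)$ with the full graph $\Gg^T_q(B^+)$, but by Theorem~\ref{thm:inv-fol} (and explicitly in the paper's proof) the leaf is $\Gg^T_q(B^+)\cap\{f\le c+\eps\}$: the radius of $B^+$ is the constant $\rho/2$ coming from the $\lambda$-Lemma and is \emph{not} tied to $\eps$, so for $z_+$ near $\partial B^+$ one typically has $f\bigl(\Gg^T_q(z_+)\bigr)>c+\eps$ and such points do not belong to $N^s_x$ at all. Consequently ``$H_s$ preserves every leaf'' is not automatic from the graph structure: you must show that the path $s\mapsto\Gg^T_q\bigl((1-s)z_+\bigr)$ never leaves the sublevel set $\{f\le c+\eps\}$, i.e.\ that $f\circ\Gg^T_q$ does not increase along radial contraction, at least at the boundary $\{f=c+\eps\}$ of each leaf. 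This is precisely the step the paper's proof devotes its final argument to (the inward-pointing condition~(\ref{eq:theta-mu}) plus compactness of the leaf space and of the leaf boundaries). For your homotopy the analogous verification is: on the central leaf, $\frac{d}{ds}\big|_{s=0}f\bigl(0,(1-s)z_+\bigr)=-df_{(0,z_+)}[(0,z_+)]\approx-\langle A^+z_+,z_+\rangle<0$ for $\eps$ small, and then continuity in $(T,q,z_+)$ together with compactness extends the strict inequality to all leaves with $T\ge\tau$ for $\tau$ large. Without this step $H$ is only a deformation retraction of the union of graphs $F$ onto $D$, not of the pair $(N^s_x,L^s_x)$, so you should add it; once it is in place your argument closes.
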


\begin{corollary}\label{cor:conley-index-N-L}
For any pair of spaces in Theorem~\ref{thm:deformation-retract} it holds that
\begin{equation}\label{eq:N-L}
   \Ho_\ell(N^s_x,L^s_x)\cong
   \begin{cases}
       \Z&,\ell=k:=\IND(x),
       \\
       0&,\text{otherwise.}
   \end{cases}
\end{equation}
\end{corollary}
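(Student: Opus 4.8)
\textbf{Proof proposal for Corollary~\ref{cor:conley-index-N-L}.}
The plan is to reduce the computation entirely to the standard relative homology of a disk modulo its boundary, using Theorem~\ref{thm:deformation-retract} as the only substantive input. First I would invoke Theorem~\ref{thm:deformation-retract}: for all sufficiently small $\eps>0$ and sufficiently large $\tau>1$ the pair $(N^s_x,L^s_x)$ strongly deformation retracts onto the pair $(D,A)=(\varphi_{-\tau}W^u_\eps(x),\,L^s_x\cap W^u(x))$. Since a strong deformation retraction of pairs is in particular a homotopy equivalence of pairs, it induces an isomorphism
\begin{equation*}
     \Ho_\ell(N^s_x,L^s_x)\;\cong\;\Ho_\ell(D,A)
\end{equation*}
for every $\ell$. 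So it remains to compute $\Ho_\ell(D,A)$.

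Next I would unwind the geometry of $(D,A)$ supplied by Theorem~\ref{thm:deformation-retract}. There $D=\varphi_{-\tau}W^u_\eps(x)$ is a closed $k$-disk ($k=\IND(x)$) and $A$ is obtained from $D$ by deleting the smaller open concentric disk $\INT\varphi_{-2\tau}W^u_\eps(x)$; hence $A$ is the closed ``annular'' region between two concentric $k$-disks. The radial structure (equivalently, the diffeomorphism with the standard pair $(\D^k,\D^k\setminus\INT\tfrac12\D^k)$ furnished by Corollary~\ref{cor:desc-disk} together with the flow) shows that $A$ strongly deformation retracts onto the outer boundary sphere $\partial D\cong S^{k-1}$, and this retraction extends to a homotopy equivalence of pairs $(D,A)\simeq(\D^k,S^{k-1})$. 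Therefore $\Ho_\ell(D,A)\cong\Ho_\ell(\D^k,S^{k-1})$.

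Finally I would carry out the standard computation of $\Ho_\ell(\D^k,S^{k-1})$. The cleanest route is to note that collapsing the boundary gives $\D^k/S^{k-1}\cong S^k$, so by the good-pair isomorphism $\Ho_\ell(\D^k,S^{k-1})\cong\widetilde\Ho_\ell(S^k)$, which is $\Z$ for $\ell=k$ and $0$ otherwise; this already includes the case $k=1$ (where $S^0$ has two points and $\widetilde\Ho_1(S^1)=\Z$). Alternatively one runs the long exact sequence of the pair $(\D^k,S^{k-1})$ using contractibility of $\D^k$ to get $\Ho_\ell(\D^k,S^{k-1})\cong\widetilde\Ho_{\ell-1}(S^{k-1})$ for $\ell\ge1$, checking the $\ell=0,1$ and $k=1$ corners by hand. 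Combining the three displayed isomorphisms yields~(\ref{eq:N-L}).

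\textbf{Main obstacle.} There is essentially no analytic obstacle remaining at this stage: all dynamical content has been absorbed into Theorem~\ref{thm:deformation-retract}. The only point requiring a little care is the bookkeeping in Step~2 — verifying that the ``annulus'' $A$ really does retract onto a genuine sphere $S^{k-1}$ (rather than something of the wrong homotopy type) and that the retraction is compatible with the inclusion $A\hookrightarrow D$, so that one gets a homotopy equivalence of \emph{pairs}; and then handling the low-dimensional case $k=1$, where $A$ is disconnected, separately in the homological computation.
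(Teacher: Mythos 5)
Your argument is correct and is exactly how the paper intends the corollary to be read: the paper states it without a proof environment, as an immediate consequence of Theorem~\ref{thm:deformation-retract}, which already identifies $(D,A)$ as a closed $k$-disk together with the annulus obtained by deleting a smaller open concentric disk. The chain of isomorphisms you give — invariance of relative homology under the strong deformation retraction of pairs, replacement of $(D,A)$ by $(\D^k,S^{k-1})$, and the standard computation $\Ho_\ell(\D^k,S^{k-1})\cong\widetilde\Ho_\ell(S^k)$ — is precisely the bookkeeping the paper leaves to the reader.
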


To construct a deformation to prove Theorem~\ref{thm:deformation-retract}
there is the immediate temptation to use the already present forward
flow $\varphi_t$. Unfortunately and obviously, this only works along the stable manifold
where indeed the flow moves any point into the unstable manifold, as time $t\to\infty$.
Along the complement of the stable manifold this does not work at all.\footnote{
  This is known as \emph{discontinuity of the flow end point map on
  unstable manifolds} and obstructs simple \emph{geometric} proofs of a
  number of desirable results such as the one that the unstable
  manifolds of a Morse-Smale gradient flow on a closed manifold
  naturally provide a CW decomposition; see e.g.~{\cite{bott:1988a,banyaga:2004a,nicolaescu:2011a,burghelea:2011a-arXiv,2011arXiv1102.2838Q}}.
  }
\begin{figure}
  \centering
  \includegraphics{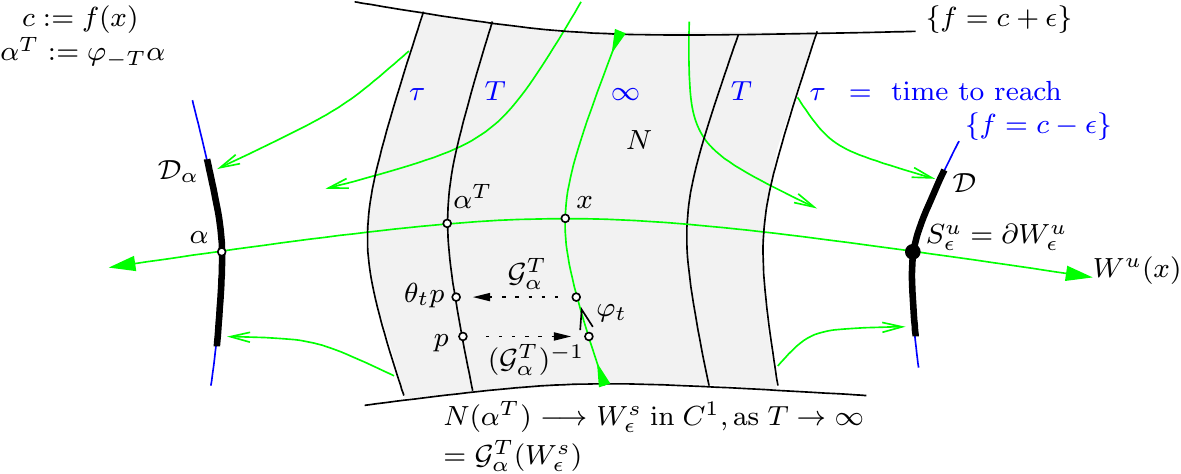}
  \caption{Dynamical thickening $(N,\theta)=(N^s_x,\theta^s_x)$
                 of ascending disk $W^s_\eps(x)$
                }
  \label{fig:fig-N}
\end{figure}
However, given the foliation in Theorem~\ref{thm:inv-fol} of $N_x^s$ in terms of \emph{graphs},
it is then a natural idea to turn this foliation into a \emph{dynamical foliation}
in the sense that each leaf will be endowed with its own flow. The natural candidate
for these flows is our given flow $\varphi_t$ on the ascending disk
$W^s_\eps(x)=N^s_x\cap W^s(x)$ transported to any leaf via conjugation by the corresponding
graph map; see Figures~\ref{fig:fig-N}\footnote{
  In Figure~\ref{fig:fig-N} we use \emph{simultaneously} global and
  local coordinate notation for illustration.
  } and~\ref{fig:fig-induced-flow}.
This way the set $N_x^s$ turns into a disjoint union of copies of the dynamical
system $(W^s_\eps(x),\varphi)$. So one could call this procedure a {\bf dynamical thickening} of
the stable manifold; see~\cite{weber:2014e} for an application to a classical theorem.

\begin{proof}[Proof of Theorem~\ref{thm:deformation-retract}]
Throughout we work in the local model provided by Definition~\ref{def:loc-coord-lambda}.
In particular, we will use these notation conventions in our construction of a strong
deformation retraction $\theta$ of $N$ onto its part $D$ in the unstable manifold.
As pointed out above on the stable manifold the forward flow $\{\phi_t\}_{t\in[0,\infty]}$
itself does the job. Indeed $\phi_\infty$ pushes the whole leaf $N(0)$, that is the ascending disk
$W^s_\eps$ by Theorem~\ref{thm:inv-fol}, into the origin -- which lies in the unstable manifold.
Since $\phi_t$ restricted to the origin is the identity, the origin is a strong deformation retract of $N(0)$.
If the Morse index $k$ is zero, then $N=N(0)$ and we are done; similarly for $k=n$.
Assume from now on $1\le k\le n-1$. The main idea is to use the graph
maps $\Gg^\infty$ and $G^T_\alpha$ provided by
Theorems~\ref{thm:loc-st-mf-thm} and~\ref{thm:backward-lambda-Lemma}, respectively,
and their left inverse $\pi_+$ to extend the good retraction properties of  $\phi_t$ on the
ascending disk $N(0)$ to all the other leaves $N(\alpha^T)$ provided
by Theorem~\ref{thm:inv-fol}.

\begin{definition}[Induced semi-flow -- Dynamical thickening]
\label{def:induced-semi-flow}
By Theorem~\ref{thm:inv-fol} each $z\in N=N^{\eps,\tau}$ lies on a leaf
$N(\alpha^T)$ for some $T\ge\tau$ and some $\alpha$ in the ascending
disk $S^u_\eps$ and where $\alpha^T:=\phi_{-T}\alpha$.
Then the continuous map $\theta:[0,\infty]\times N\to X$ defined by
\begin{equation}\label{eq:str-def-retract}
     \theta_t z
     :=\Gg_\alpha^T\circ\pi_+\circ \phi_t\circ\Gg^\infty\circ \pi_+(z)
     =\Gg_\alpha^T\circ\phi_t\circ\pi_+(z),
\end{equation}
is called the {\bf induced semi-flow on \boldmath $N$}; see Figure~\ref{fig:fig-induced-flow}.
It is of class $C^r$ away from the stable manifold. Here~(\ref{eq:str-def-retract})
simplifies as $\Gg^\infty=(0,id)$ and $\pi_+|_{E^+}=\1$.
\end{definition}

\begin{figure}
  \centering
  \includegraphics{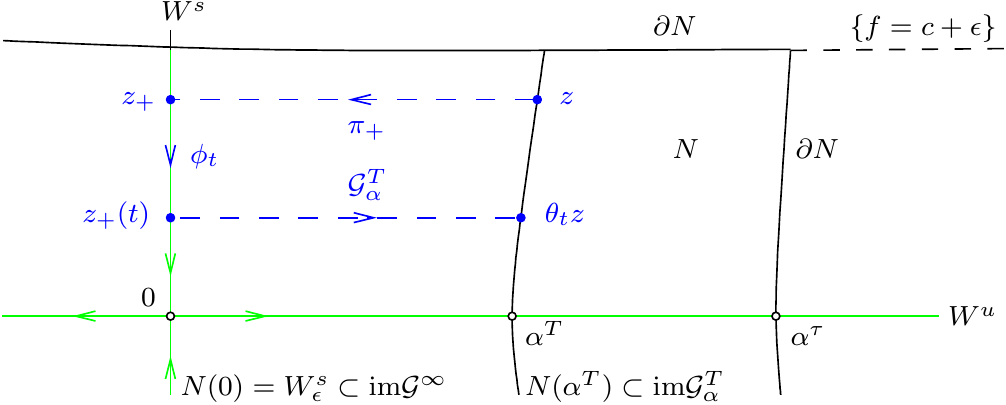}
  \caption{Leaf preserving semi-flow $\theta_t$
          }
  \label{fig:fig-induced-flow}
\end{figure}

Observe that while $\theta$ apriori only takes values in the image
$F\supset N$ of the graph maps,
it does preserve the \emph{leaves of $F$}; see Corollary~\ref{cor:inv-fol}.
Now continuity on $[0,\infty)\times N$ follows from continuity of the maps involved.
Given $z\in N(\alpha^T)$, set $z_+:=\pi_+ z$. Since $z_+$ lies in the
stable manifold by construction of our ''flat'' coordinates,
we obtain that $z_+(t):=\phi_t z_+\to 0$, as $t\to\infty$. So firstly the limit
\begin{equation*}
     \theta_\infty z
     :=\lim_{t\to\infty} \theta_t z
     =\Gg^T_\alpha\lim_{t\to\infty}z_+(t)
     =\Gg^T_\alpha(0)
     =\alpha^T
\end{equation*}
exists and lies in the unstable manifold indeed. Here we used continuity of $\Gg^T_\alpha$
and the final identity holds by Theorem~\ref{thm:backward-lambda-Lemma}.
Secondly $\theta_t z\to\theta_\infty z$, as $t\to\infty$. This shows, firstly, that the map
\begin{equation}\label{eq:strong-def-retract}
     \theta_\infty:N\to D
     ,\qquad
     D=\phi_{-\tau} W^u_\eps
     \cong\{0\}\cup\left([\tau,\infty)\times S^u_\eps\right),
\end{equation}
is a retraction and, secondly, that the map $\theta$ extends continuously to $[0,\infty]\times N$.
The fact that the origin is a fixed point of $\pi_+$ and $\phi_t$ implies that
\begin{equation*}
     \theta_t\alpha^T=\Gg^T_\alpha\circ\phi_t\circ\pi_+(0)
     =\Gg^T_\alpha(0)=\alpha^T.
\end{equation*}

Hence $\theta_t|_D=id_D$, for every $t\in[0,\infty]$.
It remains to show that $\theta_t$ actually preserves $N$.
It suffices to show that $\theta_t$ preserves each leaf of the foliation
$$
     N=N(0)\cup
     \bigcup_{\substack{T\ge\tau\\\alpha\in S^u_\eps}}
     N(\alpha^T).
$$
In contrast to the infinite dimensional case~\cite{weber:2014c} a
simple compactness argument will do.
Note that any leaf, other than $N(0)=W^s_\eps$, is of the form
$$
     N(\alpha^T)
     =\Gg^T_\alpha(B^+)\cap\{f\le c+\eps\}
     ,\qquad
     \p N(\alpha^T)=\Gg^T_\alpha(B^+)\cap\{f=c+\eps\},
$$
whereas $N(0)=\Gg^\infty(B^+)\cap\{f\le c+\eps\}=W^s_\eps$ and $\p N(0)=S^s_\eps$.
Now since the boundary of $N(\alpha^T)$ lies in a level set of $f$
and $\theta_s$ preserves the graph of $\Gg^T_\alpha$ we only need to
show that there is a constant $-\mu<0$ such that
\begin{equation}\label{eq:theta-mu}
     \left.\frac{d}{dt}\right|_{t=0}f(\theta_t z)
     \le-\mu<0,\qquad
     \forall z\in\p N(\alpha^T),\quad\forall\alpha^T\in D.
\end{equation}
This means that the $\theta$ flow is inward pointing along the
boundary of each leaf and then we are done.
But~(\ref{eq:theta-mu}) holds true for some constant, call it $-2\mu$, along
the (compact) boundary
$S^s_\eps$ of the leaf $N(0)$, just because $\theta_t=\phi_t$ on $N(0)$ and $f$
strictly decreases along its downward gradient flow -- unless there is a
critical point which it isn't on $S^s_\eps$.
Compactness of the leaf space $D$ and of the boundary of each leaf,
together with continuity of the maps whose composition is $\frac{d}{dt}\theta_t$,
then implies that~(\ref{eq:theta-mu}) holds true for all nearby
leaves. To restrict to nearby leaves just fix $\eps>0$
sufficiently small and $\tau>1$ sufficiently large.
\end{proof}

\subsubsection*{Global foliation}
To extend the foliation of the neighborhood $N_x^s$ of $x$ all along
the stable manifold $W^s(x)$, that is in the backward time direction,
according to Convention~\ref{con:back-flow}, that is without using the backward flow,
is an interesting open problem -- even more so in heat flow situations
such as~\cite{weber:2014c}.



\bibliographystyle{alpha}
\bibliography{ISTIME-5.bbl}{}

%


%
\end{document}